\documentclass[11pt]{article}

\usepackage{enumerate, amsmath, amsthm, amsfonts, amssymb, xy, color,
  mathrsfs, graphicx, paralist}
\usepackage[margin=1in]{geometry} 
\usepackage[colorlinks=true, urlcolor=blue, linkcolor=blue, citecolor=blue,  bookmarksdepth=2]{hyperref}

\usepackage{tocloft}
\setlength{\cftbeforesecskip}{0.5ex}
\setcounter{tocdepth}{1}

\usepackage{tabmac}


\allowdisplaybreaks

\input xy
\xyoption{all}

\numberwithin{equation}{section}
\newtheorem{theorem}[equation]{Theorem}

\newtheorem{proposition}[equation]{Proposition}
\newtheorem{lemma}[equation]{Lemma}

\theoremstyle{definition}
\newtheorem{rmk}[equation]{Remark}
\newenvironment{remark}[1][]{\begin{rmk}[#1] \pushQED{\qed}}{\popQED \end{rmk}}
\newtheorem{eg}[equation]{Example}
\newenvironment{example}[1][]{\begin{eg}[#1] \pushQED{\qed}}{\popQED \end{eg}}
\newtheorem{defn}[equation]{Definition}
\newenvironment{definition}[1][]{\begin{defn}[#1]\pushQED{\qed}}{\popQED \end{defn}}
\newtheorem{construct}[equation]{Construction}
\newenvironment{construction}[1][]{\begin{construct}[#1]\pushQED{\qed}}{\popQED \end{construct}}

\newcommand{\bC}{\mathbf{C}}
\newcommand{\cC}{\mathcal{C}}

\newcommand{\cE}{\mathcal{E}}

\newcommand{\bF}{\mathbf{F}}

\newcommand{\cH}{\mathcal{H}}

\newcommand{\rH}{\mathrm{H}}

\newcommand{\cI}{\mathcal{I}}

\newcommand{\bL}{\mathbf{L}}
\newcommand{\cL}{\mathcal{L}}

\newcommand{\rL}{\mathrm{L}}

\newcommand{\cM}{\mathcal{M}}

\newcommand{\cO}{\mathcal{O}}

\newcommand{\bP}{\mathbf{P}}

\newcommand{\cQ}{\mathcal{Q}}

\newcommand{\cR}{\mathcal{R}}

\newcommand{\rR}{\mathrm{R}}

\newcommand{\bS}{\mathbf{S}}
\newcommand{\cS}{\mathcal{S}}

\newcommand{\cT}{\mathcal{T}}

\newcommand{\cU}{\mathcal{U}}

\newcommand{\cX}{\mathcal{X}}

\newcommand{\cY}{\mathcal{Y}}

\newcommand{\bZ}{\mathbf{Z}}
\newcommand{\cZ}{\mathcal{Z}}

\newcommand{\rd}{\mathrm{d}}

\newcommand{\fg}{\mathfrak{g}}

\newcommand{\fh}{\mathfrak{h}}
\newcommand{\rh}{\mathrm{h}}

\newcommand{\fl}{\mathfrak{l}}

\newcommand{\fm}{\mathfrak{m}}

\newcommand{\fn}{\mathfrak{n}}

\newcommand{\fp}{\mathfrak{p}}

\newcommand{\fs}{\mathfrak{s}}

\newcommand{\ft}{\mathfrak{t}}

\newcommand{\fz}{\mathfrak{z}}

\renewcommand{\phi}{\varphi}
\renewcommand{\emptyset}{\varnothing}
\newcommand{\eps}{\varepsilon}

\renewcommand{\tilde}[1]{\widetilde{#1}}
\newcommand{\ol}[1]{\overline{#1}}
\newcommand{\ul}[1]{\underline{#1}}

\newcommand{\arxiv}[1]{\href{http://arxiv.org/abs/#1}{{\tt arXiv:#1}}}

\makeatletter
\def\Ddots{\mathinner{\mkern1mu\raise\p@
\vbox{\kern7\p@\hbox{.}}\mkern2mu
\raise4\p@\hbox{.}\mkern2mu\raise7\p@\hbox{.}\mkern1mu}}
\makeatother

 
\DeclareMathOperator{\codim}{codim}

\renewcommand{\hom}{\operatorname{Hom}}

\DeclareMathOperator{\ann}{Ann}

\DeclareMathOperator{\rank}{rank}
\DeclareMathOperator{\ext}{Ext}

\DeclareMathOperator{\Pf}{Pf}
\DeclareMathOperator{\Sym}{Sym}

\DeclareMathOperator{\depth}{depth}

\DeclareMathOperator{\pdim}{pdim}

\DeclareMathOperator{\Spec}{Spec}
\DeclareMathOperator{\grade}{grade}

\DeclareMathOperator{\sgn}{sgn}
\DeclareMathOperator{\Jac}{Jac}

\newcommand{\GL}{\mathbf{GL}}
\newcommand{\SL}{\mathbf{SL}}
\newcommand{\Sp}{\mathbf{Sp}}
\newcommand{\SO}{\mathbf{SO}}
\newcommand{\Gr}{\mathbf{Gr}}
\newcommand{\Fl}{\mathbf{Fl}}
\newcommand{\spin}{\mathbf{spin}}
\newcommand{\Spin}{\mathbf{Spin}}

\title{Moduli of Abelian varieties, Vinberg $\theta$-groups, and free resolutions}
\date{January 21, 2013}
\author{Laurent Gruson \and Steven V Sam \and Jerzy Weyman}
\begin{document}


\maketitle

\begin{center}
Dedicated to David Eisenbud on the occasion of his 65th birthday.
\end{center}

\begin{abstract}
We present a systematic approach to studying the geometric aspects of Vinberg $\theta$-representations. The main idea is to use the Borel--Weil construction for representations of reductive groups as sections of homogeneous bundles on homogeneous spaces, and then to study degeneracy loci of these vector bundles. Our main technical tool is to use free resolutions as an ``enhanced'' version of degeneracy loci formulas. We illustrate our approach on several examples and show how they are connected to moduli spaces of Abelian varieties. To make the article accessible to both algebraists and geometers, we also include background material on free resolutions and representation theory.
\end{abstract}

\tableofcontents

\section*{Introduction.}

The contents of these notes sit at the crossroads of representation theory, algebraic geometry, and commutative algebra, so we will explain each of these perspectives on our work before getting into the details.

From representation theory, we are considering the problem of classifying orbits in Vinberg $\theta$-representations $(G,U)$ \cite{vinberg}. From the point of view of (geometric) invariant theory these are the representations that are the simplest. One naturally gets a representation from a $\bZ$-grading of a Kac--Moody algebra $\fg$ and so one naturally gets a trichotomy of these representations according to the structure of $\fg$: finite type, affine type, and wild type. The $\theta$-representations come from finite and affine type. The $\theta$-representations of finite type have finitely many orbits and include many cases of classical interest, such as determinantal varieties. The study of the geometry and algebra of these orbits is undertaken in work in progress by Kra\'skiewicz--Weyman (starting with \cite{kw}). In the affine case, there are typically infinitely many orbits, but the $\theta$-representation $(G,U)$ has the property that the ring of semi-invariants $\Sym(U^*)^{(G,G)}$ is a polynomial ring and its unstable locus (nullcone) has finitely many orbits. This class of representations includes the adjoint representations of semisimple Lie algebras and share many features in common with them. 

From the point of view of algebraic geometry, we are giving geometric constructions of (torsors of) Abelian varieties from the data of a $G$-orbit in $U$. The GIT quotient $U/\!\!/G$ is isomorphic to a quotient space $\fh/W$ where $W \subset \GL(\fh)$ is a complex reflection group. In most of the cases that we consider, the quotients $\fh/W$ were considered previously and contain an open subset isomorphic to a moduli space of curves with a special kind of marked data. However, the constructions for Abelian varieties that we give seem to be new. One of the nice features of our work is that it is possible to make a lot of it explicit and turn it into input for the computer algebra system Macaulay2. For example, in the case of $\bigwedge^3 V$ ($V$ is a vector space of dimension 9), we give in \S\ref{section:w39m2code} a detailed explanation of how to calculate the ideals of the degeneracy loci under study (which include $(3,3)$-polarized Abelian surface torsors) starting with a $\GL(V)$-orbit.

From commutative algebra, this circle of ideas illustrates the power of a systematic use of perfect resolutions. As we will soon explain, the main tool that we employ is the Eagon--Northcott generic perfection theorem: using the minimal free resolutions of Kra\'skiewicz--Weyman (many of which are perfect resolutions), we can construct certain global sheafy complexes which specialize to locally free resolutions of some varieties of interest, such as the Abelian variety torsors mentioned above.

In particular, for $\theta$-representations of affine type, the GIT quotient $U /\!\!/ G$ is a weighted projective space, and hence rational, and it is of interest to know if the orbit space $U/G$ has a modular interpretation. The problem of studying the nilpotent orbits requires a different kind of approach and will not be discussed in these notes. For some of the examples of $\theta$-representations, it is easy to find such interpretations using constructions from standard linear algebra, such as determinants. However, many of them do not seem to have any obvious constructions associated with them. We take up a systematic approach to dealing with these representations which we now outline (see Construction~\ref{construct:main} for details). The main idea is to use information of the orbits in representations of finite type to bootstrap to the affine type case.

~

\begin{compactenum}[1.]
\item Using the Borel--Weil construction, one may realize $U$ as the space of sections of a homogeneous bundle $\cU$ on a homogeneous space $G/P$. This can usually be done in many different ways. For any point $x \in G/P$, the stabilizer of $x$ is a subgroup in $G$ which is conjugate to $P$, and for our choices of $P$, the action of $P$ on the fiber $\cU(x)$ will have finitely many orbits (and one can restrict the action to a certain reductive group $G' \subset P$ without affecting the orbit structure). 

\item These orbits can be glued together to get ``global orbit closures'' in the total space of $\cU$. Any vector $v \in U$ is then a section of $\cU$ and each of these global orbit closures gives ``degeneracy loci'' in $G/P$ by considering when $v(G/P)$ intersects a given global orbit closure. Hence the first step to understanding the geometry of these degeneracy loci should be to understand the ``local'' geometry of orbit closures in representations with finitely many orbits.

\item Vinberg's theory allows one to completely classify the orbits in these finite type representations. In many cases, one can calculate the minimal free resolutions of the coordinate rings of these orbit closures and various equivariant modules which are of interest. This program is taken up in the work of Kra\'skiewicz--Weyman.

\item For generic sections, the free resolutions can be turned into locally free resolutions for the degeneracy loci in question via the Eagon--Northcott generic perfection theorem. The power in this approach is that it allows one to obtain cohomological information about the varieties. This can sometimes be enough to determine the structure of the variety, such as showing it is an Abelian variety (which happens in many cases that we consider).
\end{compactenum}

~

In the case of a representation with finitely many orbits, there is no interesting moduli over an algebraically closed field, but it is often interesting to consider $\bZ$-forms of the group and its representation and to study the arithmetic orbits. This has been done in a series of papers by Bhargava \cite{bhargava}. Under that perspective, a later goal would be to understand the arithmetic orbits for the representations that we consider. Some examples of representations which have positive-dimensional quotients have been worked out by Ho in her thesis \cite{weiho}.

For the contents of the paper: \S\S\ref{section:freeres}--\ref{section:reptheory} are introductory in nature and provide background on free resolutions and the requisite representation theory that will be used in the notes. \S\ref{section:curves} contains some geometric preliminaries on Abelian varieties and moduli spaces of vector bundles on curves, as well as a more precise description of the steps outlined above. The rest of the notes are devoted to studying examples of $\theta$-representations of affine type. 

\subsection*{Notation and conventions.}

If $R$ is a graded ring, then $R(d)$ is the $R$-module $R$ with a
grading shift: $R(d)_i = R_{d+i}$. We define a {\bf graded local ring}
to be a positively graded ring $R$ whose degree 0 part is a field. In
this case, we denote $\fm = \bigoplus_{i > 0} R_i$. Given a free
$K$-module $E$, we think of $\Sym(E)$ as a graded ring by $\Sym(E)_i =
S^i E$. Furthermore, when we talk about modules over graded rings, we
will implicitly assume that they are also graded.

If $X$ is a scheme over a field $K$ and $E$ is a vector space, we let
$\ul{E}$ denote the trivial vector bundle $E \otimes \cO_X$. Also,
given a vector bundle $\cE$, we let $\det \cE$ denote its top exterior
power. 

For the sections involving examples, we will work over an algebraically closed field of characteristic 0, which we will just denote by the complex numbers $\bC$. Ultimately, the goal is to relax this assumption to other characteristics, or non-algebraically closed fields, so the introductory sections are written in this more general context. We will also make some comments throughout about this point. 

\subsection*{Acknowledgements.}

We thank Manjul Bhargava, Igor Dolgachev, Wei Ho, Steven Kleiman, Bjorn Poonen, and Jack Thorne for helpful discussions and Damiano Testa for explaining the proof of Theorem~\ref{thm:cohomologyabelian}. We also thank Igor Dolgachev for pointing out numerous references related to this work. Finally, we thank an anonymous referee for making some suggestions.

We also thank Federico Galetto and Witold Kra\'skiewicz for assistance with some computer calculations. The software LiE \cite{lie} and Macaulay2 \cite{m2} were helpful in our work.

Steven Sam was supported by an NDSEG fellowship while this work was
done. Jerzy Weyman was partially supported by NSF grant DMS-0901185.

\section{Free resolutions.} \label{section:freeres}

\subsection{Basic definitions.}

A lot of the foundational results in this section can be found in
\cite[Appendix]{brunsvetter}.

\begin{definition}
  Let $R$ be a commutative ring and $M$ be a finitely generated
  $R$-module. A complex of $R$-modules
  \[
  \bF_\bullet: \cdots \to \bF_i \xrightarrow{d_i} \bF_{i-1} \to \cdots
  \to \bF_0
  \]
  is a {\bf projective resolution} of $M$ if
  \begin{compactitem}
  \item each $\bF_i$ is a finitely generated projective $R$-module,
  \item $\rH_i(\bF_\bullet) = 0$ for $i>0$ and $\rH_0(\bF_\bullet) =
    M$.
  \end{compactitem}
The {\bf projective dimension} of $M$ (denoted $\pdim M$) is the minimum length of any projective resolution of $M$. {\bf Free resolutions} are projective resolutions where the $\bF_i$ are free modules. If $R$ is a (graded) local ring with maximal ideal $\fm$, then $\bF_\bullet$ is {\bf minimal} if
  \begin{compactitem}
  \item $d_i(\bF_i) \subseteq \fm \bF_{i-1}$ for all $i>0$.
  \end{compactitem}
  In the graded case, we will shift the gradings of the $\bF_i$ to
  assume that the differentials are homogeneous of degree 0.
\end{definition}

\begin{definition}
  Let $R$ be a Noetherian ring and let $M$ be a finitely generated
  $R$-module. A sequence $(r_1, \dots, r_n)$ of elements in $R$ is a
  {\bf regular sequence} on $M$ if
  \begin{compactitem}
  \item $r_1$ is not a zerodivisor or unit on $M$, and
  \item $r_i$ is not a zerodivisor or unit on $M/(r_1, \dots,
    r_{i-1})M$ for all $i>1$.
  \end{compactitem}
  For an ideal $I \subset R$, the {\bf depth} of $M$ (with respect to $I$) is the length of the longest regular sequence for $M$ which is
  contained in $I$. It is denoted by $\depth_I M$. If $R$ is local
  with maximal ideal $\fm$, we denote $\depth M = \depth_\fm M$. For
  an ideal $I \subset R$, the {\bf grade} of $I$ is the length of the
  longest regular sequence in $I$ for $R$. $M$ is {\bf perfect of
    grade $g$} if $g = \pdim M = \grade \ann M$. (In general, one has
  $\pdim M \ge \grade \ann M$.)

  Over a local Noetherian ring $R$, a finitely generated module $M$ is
  {\bf Cohen--Macaulay} if it is 0 or $\depth M = \dim M :=
  \dim(R/\ann(M))$. For a general Noetherian ring $R$, $M$ is
  Cohen--Macaulay if the localization $M_\fp$ is Cohen--Macaulay over
  $(R_\fp, \fp)$ for all prime ideals $\fp$ of $R$. A Noetherian ring
  is Cohen--Macaulay if it is a Cohen--Macaulay module over itself.
\end{definition}

\begin{theorem} Let $R$ be a Noetherian Cohen--Macaulay ring. 
  \begin{compactenum}[\rm 1.]
  \item For every ideal $I \subset R$, we have $\grade I = \codim I =
    \dim R - \dim(R/I)$.
  \item The polynomial ring $R[x]$ is Cohen--Macaulay.
  \item If an $R$-module $M$ is perfect, then it is Cohen--Macaulay. 
  \end{compactenum}
\end{theorem}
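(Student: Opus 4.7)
The three parts interlock: part (1) provides the key dictionary between grade and codimension, and then part (3) follows quickly from (1) together with the Auslander--Buchsbaum formula. Part (2) is essentially independent. I would therefore attack (1) first, then (2) by a flatness argument, then (3).

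For (1), the inequality $\grade I \le \codim I = \dim R - \dim(R/I)$ is immediate: any regular sequence $r_1,\dots,r_g$ inside $I$ gives a strictly ascending chain $(r_1) \subsetneq (r_1,r_2) \subsetneq \cdots \subsetneq (r_1,\dots,r_g) \subseteq I$ whose successive codimensions increase by $1$ by Krull's principal ideal theorem. For the reverse inequality, I would induct on $c = \codim I$. The case $c = 0$ is trivial. For $c \ge 1$, the essential input from the Cohen--Macaulay hypothesis is that $R$ has no embedded associated primes, so every associated prime of $R$ is minimal, hence has codimension $0$; in particular $I$ lies in no associated prime of $R$, and prime avoidance yields a non-zerodivisor $r_1 \in I$. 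The quotient $R/(r_1)$ is again Cohen--Macaulay (killing a regular element preserves CM, via the localization-and-depth characterization), and the image of $I$ has codimension $c-1$, so the induction provides $\bar r_2,\dots,\bar r_c$ which lift to the desired regular sequence $r_1,\dots,r_c$ in $I$.

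For (2), I would use the standard ``flat ascent'' principle for Cohen--Macaulayness: if $A \to B$ is a flat homomorphism of Noetherian rings with $A$ Cohen--Macaulay and all fibers $B \otimes_A k(\fp)$ Cohen--Macaulay, then $B$ is Cohen--Macaulay. The map $R \to R[x]$ is free, hence flat, and its fibers are the polynomial rings $k(\fp)[x]$ over fields, which are regular, hence Cohen--Macaulay.

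For (3), since Cohen--Macaulayness of $M$ is defined via all localizations, I would pass to $M_\fp$ for $\fp \in \supp M$. Localizing a finite free resolution of $M$ shows $M_\fp$ is perfect over $R_\fp$, and $R_\fp$ is itself Cohen--Macaulay and local. The Auslander--Buchsbaum formula gives
$$\pdim M_\fp + \depth M_\fp = \depth R_\fp = \dim R_\fp,$$
using the CM condition on $R_\fp$ in the last equality. By part (1) applied to $\ann M_\fp \subset R_\fp$,
$$\pdim M_\fp = \grade \ann M_\fp = \codim \ann M_\fp = \dim R_\fp - \dim M_\fp,$$
where the first equality uses perfection. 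Subtracting yields $\depth M_\fp = \dim M_\fp$, proving $M_\fp$ is Cohen--Macaulay. The main obstacle is the inductive step in (1): establishing that $I$ contains a non-zerodivisor requires the unmixedness of $R$, and maintaining the induction requires knowing that $R/(r_1)$ remains Cohen--Macaulay, both of which rely on the characterization of CM in terms of associated primes and depth.
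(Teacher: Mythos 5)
The paper gives no proof of this theorem; it is stated as background material, with the opening of \S\ref{section:freeres} pointing the reader to the appendix of Bruns--Vetter for such facts. Your argument is a correct and standard reconstruction, so there is nothing to compare against in the paper itself.

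One spot where you are a bit too terse is in the inequality $\grade I \le \codim I$ in part (1): Krull's principal ideal theorem only gives the upper bound $\operatorname{ht}(r_1,\dots,r_i)\le i$, so by itself it does not show that the codimension jumps by exactly one at each step of the chain. To finish that step you also need the (elementary, but separate) observation that every minimal prime over $J=(r_1,\dots,r_{i-1})$ is an associated prime of $R/J$, and hence cannot contain the nonzerodivisor $r_i$; this forces the height to increase. (Notice this direction requires no Cohen--Macaulay hypothesis at all.) With that addendum, part (1) is complete. For part (3), the localization of a perfect module is indeed perfect for the reason you indicate, but it is worth spelling out: $\pdim$ can only decrease under localization, $\grade\ann$ can only increase when one localizes at a prime in the support, and the two are squeezed together by the general inequality $\grade\ann M\le\pdim M$. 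Part (2) by flat ascent with regular fibers is exactly the standard argument.
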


(The distinction between perfect and Cohen--Macaulay for a module over
a Cohen--Macaulay ring is the property of having finite projective
dimension.)

\begin{theorem}[Auslander--Buchsbaum formula] Suppose $R$ is a
  Noetherian (graded) local ring and that $M$ is a finitely generated
  $R$-module with $\pdim M < \infty$. Then
  \[
  \depth M + \pdim M = \depth R. 
  \]
\end{theorem}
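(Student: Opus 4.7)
The plan is to prove the formula by induction on $d := \depth R$. For the base case $d = 0$, I claim every finitely generated $R$-module $M$ of finite projective dimension must be free. If not, take a minimal free resolution $0 \to F_n \to \cdots \to F_0 \to M \to 0$ with $n \ge 1$; by minimality, the entries of the injective map $d_n \colon F_n \hookrightarrow F_{n-1}$ all lie in $\fm$. Since $\depth R = 0$, $\fm$ is an associated prime of $R$, so there exists $y \in R \setminus 0$ with $\fm \cdot y = 0$. Then $y F_n$ is nonzero but $d_n(yF_n) \subseteq y \fm F_{n-1} = 0$, contradicting injectivity. Hence $M$ is free and the formula holds trivially.

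For the inductive step with $d \ge 1$, first suppose $\depth M \ge 1$. Since $\ass R \cup \ass M$ is a finite collection of primes, none of which equals $\fm$, prime avoidance yields $x \in \fm$ regular on both $R$ and $M$. Tensoring a minimal free resolution $F_\bullet \to M$ with $R/xR$ produces a minimal free resolution of $M/xM$ over $R/xR$: exactness is preserved because $\Tor_i^R(R/xR, M) = 0$ for all $i \ge 1$ (immediate from the length-one resolution $0 \to R \xrightarrow{x} R \to R/xR \to 0$ and $x$-regularity on $M$), and minimality is inherited because $\fm/(x)$ is the maximal ideal of $R/xR$. Thus $\pdim_{R/xR}(M/xM) = \pdim_R M$, while the standard argument shows $\depth_{R/xR}(M/xM) = \depth_R M - 1$ and $\depth(R/xR) = \depth R - 1$. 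The inductive hypothesis applied over $R/xR$ then gives the desired equality.

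The remaining case has $d \ge 1$ but $\depth M = 0$. Pass to the first syzygy $K \subseteq \fm F_0$ in a minimal free resolution, obtaining $0 \to K \to F_0 \to M \to 0$ with $\pdim K = \pdim M - 1$. The standard depth lemma (deduced from the long exact sequence of $\Ext^\bullet(k, -)$ and the characterization of $\depth$ as the least $i$ with $\Ext^i(k, -) \ne 0$) gives $\depth K \ge \min(\depth F_0, \depth M + 1) = 1$ on one hand, and $0 = \depth M \ge \min(\depth K - 1, \depth R)$ on the other. Combined with $\depth R \ge 1$, these force $\depth K = 1$, so the previous case applies to $K$ and yields $\pdim K = d - 1$, hence $\pdim M = d$, matching $\depth M + \pdim M = 0 + d$. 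The main technical obstacle is verifying that minimal free resolutions behave well under the base change $R \leadsto R/xR$---both exactness (via Tor-vanishing) and minimality must be preserved---and this base-change lemma, together with the depth lemma, is the engine driving the whole argument.
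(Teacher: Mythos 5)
The paper does not prove this theorem: it appears in \S\ref{section:freeres}, a background section where foundational facts are cited to \cite[Appendix]{brunsvetter}, so there is no proof in the text to compare against. Your argument is the standard one and it is correct. The base case exploits that $\depth R=0$ forces $\fm\in\ass R$, so an annihilator $y$ of $\fm$ kills the (by minimality) $\fm$-valued entries of the last differential, contradicting injectivity; the inductive step correctly splits into the ``both depths positive'' case, which descends to $R/xR$ once you verify that $F_\bullet\otimes R/xR$ stays exact (via $\Tor$-vanishing from $x$-regularity on $M$) and minimal, and the ``$\depth M=0$'' case, where passing to the first syzygy $K$ and applying the depth lemma pins $\depth K=1$, reducing to the first case over the same ring.

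Two small points worth being aware of, neither of which is a gap in the argument over a local ring but which touch on the ``(graded)'' qualifier in the statement: prime avoidance for a \emph{homogeneous} nonzerodivisor $x$ requires the graded version of prime avoidance (or allows $x$ of degree $>1$, or an infinite residue field); and in the $\depth M=0$ case you should record that $\pdim M\ge 1$ (equivalently $K\ne 0$), which follows because a free module would have $\depth M=\depth R\ge 1$, contradicting $\depth M=0$ — you implicitly use this when you pass to the syzygy and apply the depth lemma. With those observations made explicit, the proof is complete.
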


\begin{theorem} Let $R$ be a Noetherian (graded) local ring and $M$ be
  a perfect $R$-module of grade $g$ with minimal free resolution
  $\bF_\bullet$. Then $\hom(\bF_\bullet, R)$ is a minimal free
  resolution of the perfect module $M^\vee = \ext^g_R(M,R)$, and
  $(M^\vee)^\vee \cong M$.
\end{theorem}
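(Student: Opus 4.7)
The plan is to reduce the two assertions to the single Ext-vanishing
\[
\ext^i_R(M, R) = 0 \quad \text{for all } i \ne g,
\]
since the $i$-th cohomology of the dual complex $\hom(\bF_\bullet, R)$ is by construction $\ext^i_R(M, R)$. Once this vanishing holds, the dual complex becomes, after reindexing $\bF_i^*$ into homological degree $g-i$, a free resolution of $\ext^g_R(M, R) = M^\vee$ of length $g$.

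The vanishing for $i > g$ is immediate from $\pdim M = g$, which forces $\bF_i = 0$ for $i > g$. For $0 \le i < g$, I would invoke the standard commutative-algebra fact that $\ext^i_R(M, R) = 0$ whenever $i < \grade \ann M$. This can be proved by induction on $i$: the base case $i = 0$ uses that any $r \in \ann M$ annihilates $\hom(M, R)$, while $\grade \ann M \ge 1$ forces $\ann M$ to contain a non-zerodivisor, hence $\hom(M, R) = 0$; the inductive step reduces modulo such a non-zerodivisor $r$ via the long exact sequence associated with $0 \to R \xrightarrow{r} R \to R/rR \to 0$, exploiting the fact that $r$ acts as zero on every $\ext^j(M, R)$. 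This Ext-vanishing is the main piece of real content; the rest of the argument is formal.

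Minimality of the resulting resolution is automatic because the matrix of $d_i^*$ is the transpose of the matrix of $d_i$, and so has entries in $\fm$ by the minimality of $\bF_\bullet$. For perfection of $M^\vee$, the resolution just constructed shows $\pdim M^\vee \le g$, while the inclusion $\ann M \subseteq \ann M^\vee$ (any element of $\ann M$ acts as zero on every $\ext^i(M, R)$) yields $\grade \ann M^\vee \ge \grade \ann M = g$; combining with the general inequality $\grade \ann M^\vee \le \pdim M^\vee$ forces equality throughout, so $M^\vee$ is perfect of grade exactly $g$.

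Finally, $(M^\vee)^\vee \cong M$ is obtained by applying the first part of the theorem to $M^\vee$: its minimal free resolution is the reindexed dual of $\bF_\bullet$, and dualizing once more returns $\bF_\bullet$ itself under the canonical biduality $\bF_i^{**} \cong \bF_i$ for finitely generated free modules. Hence $(M^\vee)^\vee$ admits $\bF_\bullet$ as a minimal free resolution and is therefore isomorphic to $M$.
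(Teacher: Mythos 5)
The paper does not actually prove this theorem; it is one of the foundational facts quoted from \cite[Appendix]{brunsvetter} at the start of \S\ref{section:freeres}. Your argument is the standard one that appears in that sort of reference, and it is essentially correct: the crux is the grade-sensitivity of $\ext$ (i.e., $\ext^i_R(M,R)=0$ for $i<\grade\ann M$, a form of Rees's theorem), after which the dualized minimal resolution is automatically a minimal complex, and biduality of finite free modules gives $(M^\vee)^\vee\cong M$.

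One small point should be made explicit rather than left implicit. Your chain of inequalities
\[
g \;\le\; \grade\ann M^\vee \;\le\; \pdim M^\vee \;\le\; g
\]
only makes sense once you know $M^\vee\neq 0$; if $M^\vee$ were zero, then $\ann M^\vee = R$, $\grade$ of the unit ideal is not finite, and moreover $(M^\vee)^\vee$ would certainly not be $M$. So nonvanishing of $\ext^g_R(M,R)$ is genuinely part of what must be shown. It is easy: by minimality the differentials of $\hom(\bF_\bullet,R)$ have entries in $\fm$, so the map $\bF_{g-1}^*\to\bF_g^*$ becomes zero after tensoring with the residue field; if $\ext^g_R(M,R)$ were $0$ that map would be surjective, forcing $\bF_g^*=0$ by Nakayama and contradicting $\pdim M=g$. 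With this observation added, the proof is complete. (Your induction sketch for the vanishing $\ext^i(M,R)=0$, $i<g$, is a bit terse --- the clean way to run it is via the change-of-rings isomorphism $\ext^i_R(M,R)\cong\ext^{i-1}_{R/rR}(M,R/rR)$ for a non-zerodivisor $r\in\ann M$ --- but the statement you invoke is correct and standard.)
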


\begin{definition}
  If $M = R/I$, for an ideal $I$, is perfect, then we write
  $\omega_{R/I} = M^\vee$ and call it the {\bf canonical module} of
  $R/I$. If $R/I$ is perfect and $\omega_{R/I} \cong R/I$ (ignoring
  grading if it is present), then we say that $I$ is a {\bf Gorenstein
    ideal}. This is equivalent to the last term in the minimal free
  resolution of $R/I$ having rank 1.
\end{definition}

\begin{theorem}[Eagon--Northcott generic perfection] \label{thm:eagonnorthcott}
Let $R$ be a Noetherian ring and $M$ a perfect $R$-module of grade $g$, and let $\bF_\bullet$ be an $R$-linear free resolution of $M$ of length $g$.  Let $S$ be a Noetherian $R$-algebra. If $M \otimes_R S \ne 0$ and $\grade (M \otimes_R S) \ge g$, then $M \otimes_R S$ is perfect of grade $g$ and $\bF_\bullet \otimes_R S$ is an $S$-linear free resolution of $M \otimes_R S$. If $M \otimes_R S = 0$, then $\bF_\bullet \otimes_R S$ is exact.
\end{theorem}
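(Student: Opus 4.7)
My approach is to apply the Buchsbaum--Eisenbud acyclicity criterion to $\bF_\bullet \otimes_R S$. Define the expected ranks $r_i = \sum_{j \geq i} (-1)^{j-i} \rank F_j$ for $1 \leq i \leq g$; these depend only on the ranks of the $F_j$ and hence are preserved by base change. The criterion states that a finite free complex $F_g \to \cdots \to F_0$ is acyclic in positive degrees exactly when, for each $1 \leq i \leq g$, one has $\rank d_i = r_i$ and $\grade I_{r_i}(d_i) \geq i$, where $I_r(d)$ denotes the ideal of $r \times r$ minors. Since $\bF_\bullet$ resolves $M$ over $R$, both families of conditions hold over $R$; the rank conditions automatically persist after tensoring with $S$, so the real task is to verify $\grade_S I_{r_i}(d_i) S \geq i$ for each $i$.

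I would handle the top index $i = g$ first. Cramer's rule applied to the exactness of $\bF_\bullet$ at $F_{g-1}$ yields $I_{r_g}(d_g) \subseteq \ann_R M$, and in the perfect case the stronger radical identity $\sqrt{I_{r_g}(d_g)} = \sqrt{\ann_R M}$ holds (by duality for perfect modules together with Fitting's lemma). Extending scalars then gives $I_{r_g}(d_g) S \subseteq \ann_S(M \otimes_R S)$, so the hypothesis $\grade_S(M \otimes_R S) \geq g$ supplies the required bound in the main case; in the degenerate case $M \otimes_R S = 0$ one has $\ann_R(M) S = S$ (since the image of $\Spec S$ misses $\supp M$), and this forces $I_{r_g}(d_g) S = S$ via the radical identity as well. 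For the intermediate indices $i < g$ I would argue locally: at a prime $\fq \in \Spec S$ containing $I_{r_i}(d_i) S$ with contraction $\fp \in \Spec R$, the $R$-side inequality $\grade_{R_\fp} I_{r_i}(d_i) \geq i$ ensures $\depth R_\fp \geq i$, and one propagates this to $\depth S_\fq \geq i$ using the local structure of the complex and the already-verified grade bound at the top.

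Once the grade conditions are verified, Buchsbaum--Eisenbud delivers acyclicity in positive degrees, so $\bF_\bullet \otimes_R S$ is an $S$-free resolution of $M \otimes_R S$ of length $g$; combined with $\grade_S \ann_S(M \otimes_R S) \geq g \geq \pdim_S(M \otimes_R S)$, this gives perfectness of grade $g$. When $M \otimes_R S = 0$, the same criterion applied to the same complex yields exactness at every spot, including $F_0$. The main obstacle in this plan is the $i < g$ grade propagation: whereas the top Fitting ideal is controlled directly by $\ann_R M$, the intermediate ideals require a delicate localization argument that couples the $R$-side grade hypotheses on each $I_{r_i}(d_i)$ with the $S$-side grade of $M \otimes_R S$ to guarantee enough depth in $S_\fq$.
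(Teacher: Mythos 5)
The paper itself does not prove this statement; it cites \cite[Theorem 3.5]{brunsvetter}, so there is no in-paper argument to compare against. Your plan of attack---reduce to the Buchsbaum--Eisenbud acyclicity criterion and verify the grade bounds $\grade_S I_{r_i}(d_i)S\ge i$---is indeed the standard route, and your analysis of the top index $i=g$ and of perfection once acyclicity is known are essentially sound.

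However, your treatment of the intermediate indices $1\le i<g$ has a genuine gap, and you have already flagged it yourself as ``the main obstacle.'' The proposed remedy---``propagate $\depth R_\fp\ge i$ to $\depth S_\fq\ge i$''---is not a valid move: depth of local rings does not transfer along a ring map $R_\fp\to S_\fq$ in any general way (consider $R=K$, $S$ arbitrary, where $\depth R_\fp$ is always $0$), and the ``local structure of the complex'' by itself does not supply the missing inequality. What actually makes the argument work is a stronger statement than the one you proved for $i=g$, namely that for a perfect $R$-module $M$ of grade $g$ with a free resolution of length $g$ one has the radical identity $\sqrt{I_{r_i}(d_i)}=\sqrt{\ann_R M}$ for \emph{every} $i\in\{1,\dots,g\}$, not merely $i=g$. (Proof sketch: if $\fp\notin\supp M$ the localized complex is split exact, so all $I_{r_i}(d_i)_\fp=R_\fp$; if $\fp\in\supp M$, then whenever $(d_j)_\fp$ splits with rank $r_j$, its kernel is a free summand of rank $r_{j+1}$ onto which $(d_{j+1})_\fp$ surjects, forcing $(d_{j+1})_\fp$ to split too---so a single split $d_j$ cascades upward and eventually yields $\pdim_{R_\fp}M_\fp<g$, contradicting that $M_\fp$ is perfect of grade $g$.) With this uniform radical identity in hand, every $I_{r_i}(d_i)S$ has the same radical as $\ann_R(M)S$, which via $\mathrm{Fitt}_0$ (stable under base change) has the same radical as $\ann_S(M\otimes_R S)$; hence $\grade_S I_{r_i}(d_i)S=\grade_S\ann_S(M\otimes_R S)\ge g\ge i$ for all $i$ at once, and no separate argument for the intermediate indices is needed. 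I recommend replacing the $i<g$ paragraph with this uniform radical computation; everything else in your plan then goes through.
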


See \cite[Theorem 3.5]{brunsvetter}. It is natural to ask what happens if the grade of $M \otimes_R S$ is some value less than $g$, and this can be answered by the Buchsbaum--Eisenbud acyclicity criterion \cite[Theorem 20.9]{eisenbud}.

\begin{remark} In particular, if $R$ is Cohen--Macaulay (for example,
  $R = K[x_1, \dots, x_n]$) then we can replace grade in the above
  theorem with codimension. It is often much easier to calculate
  codimension. We will use it as follows. We first construct graded
  minimal free resolutions of perfect modules $M$ over $A = K[x_1,
  \dots, x_n]$. Then we specialize the variables $x_i$ to elements of
  a Cohen--Macaulay $K$-algebra $S$ in such a way that the codimension
  of $M$ is preserved. Then the resulting specialized complex is still
  a resolution.
\end{remark}

\subsection{Examples.}

For this section, let $K$ be a commutative ring and let $E$ be a free
module of rank $N$. In the following examples, we will construct some
complexes that are functorial in $E$ and compatible with change of
rings. Two consequences of these properties is that the complex
carries an action of the general linear group $\GL(E)$ and that the
constructions make sense for vector bundles over an arbitrary scheme.

\begin{example} 
  Write $A = \Sym(E)$. We define a complex $\bF_\bullet$ by setting
  $\bF_i = \bigwedge^i E \otimes A(-i)$. The differential is defined
  by
  \begin{align*}
  \bigwedge^i E \otimes A(-i) &\to \bigwedge^{i-1} E \otimes A(-i+1)\\*
  e_1 \wedge \cdots \wedge e_i \otimes f &\mapsto \sum_{j=1}^i (-1)^j
  e_1 \wedge \cdots \widehat{e_j} \cdots \wedge e_i \otimes e_jf.
  \end{align*}
This is the {\bf Koszul complex}. It is a resolution of $K = A/\fm$, and is functorial with respect to $E$. See \cite[Chapter 17]{eisenbud} for basic properties.
\end{example}

\begin{example}[Buchsbaum--Eisenbud] \label{eg:buchsbaumeisenbud} We
  assume $N = 2n+1$ is odd. Set $A = \Sym(\bigwedge^2 E)$, which we
  can interpret as the coordinate ring of the space of all
  skew-symmetric matrices of size $2n+1$ with entries in $K$ if we fix
  a basis $e_1, \dots, e_{2n+1}$ of $E$. Let $\Phi$ be the generic
  skew-symmetric matrix of size $2n+1$ whose $(i,j)$ entry is $x_{ij}
  = e_i \wedge e_j \in A_1$. We construct a complex
  \[
  \bF_\bullet : 0 \to (\det E)^{\otimes 2} \otimes A(-2n-1) \to (\det
  E) \otimes E \otimes A(-n-1) \to \bigwedge^{2n} E \otimes A(-n) \to
  A.
  \]
  For $j =1, \dots, 2n+1$, let $e'_j = e_1 \wedge \cdots \hat{e_j}
  \cdots \wedge e_{2n+1}$. We also define $\Pf(\hat{\jmath})$ to be the
  Pfaffian of the submatrix of $\Phi$ obtained by deleting row and
  column $j$. Then we have
  \begin{align*}
    \bigwedge^{2n} E \otimes A(-n) &\xrightarrow{d_1} A\\*
    e'_j \otimes f &\mapsto \Pf(\hat{\jmath}) f,\\
    (\det E) \otimes E \otimes A(-n-1) &\xrightarrow{d_2}
    \bigwedge^{2n} E \otimes A(-n) \\*
    (e_1 \wedge \cdots \wedge e_{2n+1}) \otimes e_j \otimes f &\mapsto 
    \sum_{i=1}^{2n+1} (-1)^i e'_i \otimes x_{ij} f,\\
    (\det E)^{\otimes 2} \otimes A(-2n-1) &\xrightarrow{d_3} (\det E)
    \otimes E \otimes A(-n-1)\\*
    (e_1 \wedge \cdots \wedge e_{2n+1})^2 \otimes f &\mapsto (e_1
    \wedge \cdots \wedge e_{2n+1}) \otimes \sum_{j=1}^{2n+1} (-1)^j
    \Pf(\hat{\jmath}) e_j f.
  \end{align*}
  This is the {\bf Buchsbaum--Eisenbud complex}. It is a resolution of
  $A/I$ where $I$ is the ideal generated by the $2n \times 2n$
  Pfaffians of $\Phi$, and it is functorial with respect to
  $E$. Furthermore, $A/I$ is a free $K$-module, and $I$ is a
  Gorenstein ideal of codimension 3. We can identify $d_2$ with the
  map $\Phi$. Buchsbaum and Eisenbud showed that given a codimension 3
  Gorenstein ideal $I$, there is an $n$ such that its free resolution
  is a specialization of the above complex.  See
  \cite[Theorem 2.1]{buchsbaumeisenbud} for more details.
\end{example}

\begin{example}[J\'ozefiak--Pragacz] \label{eg:jozefiakpragacz} We
  assume $N = 2n$ is even. Again we set $A = \Sym(\bigwedge^2 E)$ and
  let $\Phi$ be the generic skew-symmetric matrix. We will give the
  resolution for the ideal generated by the Pfaffians of size
  $2n-2$. We just give the functorial terms in the complex when $K$
  contains the field of rational numbers (the definition of the Schur
  functors $\bS$ are given in \S\ref{section:schurfunctors}):
  \begin{align*}
    \bF_0 &= A \\
    \bF_1 &= \bigwedge^{2n-2} E \otimes A(-n+1) \\
    \bF_2 &= \bS_{2,1^{2n-2}} E \otimes A(-n) \\
    \bF_3 &= (\det E) \otimes S^2 E \otimes A(-n-1) \oplus (\det E)^2
    \otimes (S^2 E)^* \otimes A(-2n+1) \\
    \bF_4 &= (\det E) \otimes \bS_{2,1^{2n-2}} E \otimes A(-2n) \\
    \bF_5 &= (\det E)^2 \otimes \bigwedge^2 E \otimes A(-2n-1) \\
    \bF_6 &= (\det E)^3 \otimes A(-3n).
  \end{align*}
  When $K$ is an arbitrary commutative ring, the functors must be
  defined differently, but the ranks of the modules remain the
  same. We refer the reader to \cite{pragacz} for the
  details. 
\end{example}

\begin{example}[Goto--J\'ozefiak--Tachibana] \label{eg:GJT} We set $A
  = \Sym(S^2 E)$, which we can interpret as the coordinate ring of the
  space of symmetric matrices of size $N$. We let $\Phi$ be the
  generic symmetric matrix. We give the terms of the resolution of the
  ideal generated by the minors of size $N-1$:
  \begin{align*}
    \bF_0 &= A\\
    \bF_1 &= (\det E)^2 \otimes (S^2 E)^* \otimes A(-N+1)\\
    \bF_2 &= (\det E)^2 \otimes \ker(E \otimes E^* \xrightarrow{\rm eval} K) \otimes A(-N)\\
    \bF_3 &= (\det E)^2 \otimes \bigwedge^2 E \otimes A(-N-1).
  \end{align*}
See \cite[\S 3]{jozefiak} for details. Over a field of characteristic 0, the term $\det E \otimes \ker(E \otimes E^* \xrightarrow{\rm eval} K)$ can be replaced by the Schur functor $\bS_{2,1^{n-2}} E$ (see \S\ref{section:schurfunctors} for the definition).
\end{example}

\subsection{The geometric technique.} \label{sec:kempfcollapsing}

The material in this section is not logically necessary for the rest of the paper, but it is the main tool behind the work of Kra\'skiewicz--Weyman, so we include it for completeness. For a reference, see \cite[Chapter 5]{weyman}. Note that we have changed notation.

Let $K$ be a field, let $X$ be a projective $K$-variety, and let $V$
be a vector space. Suppose we are given a short exact sequence of
locally free sheaves
\[
0 \to \cS \to \ul{V} \to \cT \to 0.
\]
We let $p_1 \colon \ul{V} \to V$ and $p_2 \colon \ul{V} \to X$ be the
projection maps. Set $Y = p_1(\cS) \subset V$ and $A = \cO_V =
\Sym(V^*)$. Note that $Y$ is the affine cone over some projective
variety in $\bP(V)$. Also, let $\cE$ be any vector bundle on $X$.

\begin{theorem} There is a minimal $A$-linear complex $\bF_\bullet$
  whose terms are
  \[
  \bF_i = \bigoplus_{j \ge 0} \rH^j(X; \bigwedge^{i+j}(\cT^*) \otimes
  \cE) \otimes A(-i-j).
  \]
  Furthermore, $\rH_i(\bF_\bullet) = 0$ for $i>0$ and for $i \le 0$,
  we have
  \[
  \rH_i(\bF_\bullet) = \rR^{-i}(\cO_\cS \otimes_{\cO_{\ul{V}}}
  p_2^*\cE) = \rH^{-i}(X; \Sym(\cS^*) \otimes_{\cO_X} \cE).
  \]
\end{theorem}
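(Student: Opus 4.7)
The plan is to construct $\bF_\bullet$ by resolving $\cO_\cS$ on $\ul{V}$ with a Koszul complex, tensoring with $p_2^*\cE$, pushing forward along $p_1$, and then extracting a minimal free model from the resulting \v Cech--Koszul double complex.

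First, since $\cS \hookrightarrow \ul{V}$ is cut out as the zero scheme of the tautological section of $p_2^*\cT$ induced by the surjection $\ul{V} \twoheadrightarrow \cT$, the Koszul complex
\[
\cK_\bullet := \bigwedge^\bullet(p_2^*\cT^*) \otimes p_2^*\cE, \qquad \cK_i \text{ placed in } A\text{-degree } i,
\]
is a locally free $\cO_{\ul{V}}$-resolution of $\cO_\cS \otimes p_2^*\cE$. Because $\ul{V} = X \times V$ is a product and each $\cK_i$ is pulled back from $X$ up to the degree shift, flat base change together with the projection formula gives
\[
\rR^j(p_1)_*\cK_i \;=\; \rH^j\bigl(X;\, \bigwedge^i \cT^* \otimes \cE\bigr) \otimes_K A(-i).
\]
Choosing a \v Cech (or Cartan--Eilenberg injective) resolution of each $\cK_i$ by $p_1$-acyclic sheaves produces a bounded-below double complex $C^{\bullet,\bullet}$ of graded free $A$-modules whose total cohomology computes $\rR\Gamma(\ul{V};\, \cO_\cS \otimes p_2^*\cE)$. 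Via the projection formula applied to $p_2|_\cS\colon \cS \to X$ together with the identification $(p_2|_\cS)_*\cO_\cS = \Sym(\cS^*)$, this cohomology equals $\rH^{-i}(X;\, \Sym(\cS^*) \otimes_{\cO_X} \cE)$ in homological degree $i$, giving both the vanishing for $i>0$ and the claimed identification for $i \le 0$.

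Next, extract the minimal complex. The vertical (\v Cech) differential of $C^{\bullet,\bullet}$ carries no $A$-linear content, so applying the homological perturbation lemma row-by-row replaces $C^{\bullet,\bullet}$ by a quasi-isomorphic double complex whose $(i,j)$-entry is exactly $\rH^j(X;\, \bigwedge^i \cT^* \otimes \cE) \otimes A(-i)$. Totalizing with $n = i - j$ produces a complex whose degree-$n$ term is
\[
\bF_n \;=\; \bigoplus_{j \ge 0} \rH^j\bigl(X;\, \bigwedge^{n+j} \cT^* \otimes \cE\bigr) \otimes A(-n-j),
\]
as required. Every surviving differential --- the residual Koszul part and the higher spectral-sequence differentials produced by perturbation --- strictly raises Koszul degree $i$, hence sends each generator into $\fm\bF_\bullet$, so $\bF_\bullet$ is minimal.

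The main obstacle is the perturbation step: upgrading the $E_1$-page of the \v Cech--Koszul spectral sequence to an actual minimal complex quasi-isomorphic to $\mathrm{Tot}(C^{\bullet,\bullet})$, rather than merely a subquotient of it, and verifying that the resulting differentials really do land in $\fm$. This is the technical heart of the geometric technique; for the full argument I would appeal to the detailed treatment in \cite[Chapter 5]{weyman}.
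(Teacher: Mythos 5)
The paper does not actually prove this theorem: it is lifted from \cite[Chapter~5]{weyman} (with a change of notation) and the text simply refers the reader there. So the comparison is really with the reference's argument, and your sketch does follow it: resolve $\cO_\cS \otimes p_2^*\cE$ by the Koszul complex of the tautological section of $p_2^*\cT$ on $\ul V = X \times V$, push forward along the proper projection $p_1$, and extract a minimal model from the hypercohomology double complex. The identifications $\rR^j(p_1)_*\cK_i = \rH^j(X;\bigwedge^i\cT^*\otimes\cE)\otimes A(-i)$ and $\rR^j(p_1)_*(\cO_\cS\otimes p_2^*\cE)=\rH^j(X;\Sym(\cS^*)\otimes\cE)$ are correct (the latter via $(p_2|_\cS)_*\cO_\cS=\Sym(\cS^*)$ and the fact that $p_2|_\cS$ is affine), and the perturbation step is legitimate because the \v Cech differential preserves the internal $A$-grading, so each column splits degree-by-degree as a complex of finite-dimensional $K$-vector spaces.

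One detail is stated backwards and should be fixed, since it is precisely the point of the minimality claim. You write that the surviving differentials ``strictly raise Koszul degree $i$.'' In fact the residual Koszul differential $d_K$ and its perturbations $d_K(h d_K)^k$ all strictly \emph{lower} the exterior-power index, sending the $(i,j)$-slot to the $(i-1-k,\,j-k)$-slot (the homotopy $h$ lowers \v Cech degree $j$ by one per application). What this buys is: the total degree $n=i-j$ drops by exactly $1$, as required; and the map between twists goes $A(-i)\to A(-(i-1-k))$, so a degree-$i$ generator lands in $\fm^{\,1+k}$ of the target. That internal-degree increase, not an increase in $i$, is what forces the differential into $\fm\bF_\bullet$ and hence gives minimality. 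With that correction, the argument is complete and matches the cited proof.
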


In particular, if the higher direct images of $\cO_\cS \otimes p_2^*
\cE$ vanish, then $\bF_\bullet$ is a free resolution of the
pushforward. In the case that $\cE = \cO_X$, this pushforward is an
$A$-algebra. The vanishing of the higher direct images is an intrinsic
property of the variety $Y$. 

The idea behind this theorem is to start with an affine cone variety
$Y$ and to find $X$ and $\cS$ that fit into the above framework. Then
the theorem above gives a tool for calculating the minimal free
resolution of $Y$.

\begin{example}[Eagon--Northcott complex] Let $E$ and $F$ be vector
  spaces of dimensions $m$ and $n$ and assume that $m \ge n$. We set
  $V = \hom(E, F)$ and let $Y \subset V$ be the subvariety of linear
  maps of rank at most $n-1$.

  This fits into the previous setup by taking $X = \Gr(n-1,F) \cong
  \bP(F^*)$. This has a tautological exact sequence of vector bundles 
  \[
  0 \to \cR \to \ul{F} \to \cO(1) \to 0
  \]
  where $\cR = \{(x,W) \in F \times X \mid x \in W\}$. Then we can
  take $\cS = \cH om(\ul{E},\cR) = \ul{E}^* \otimes \cR$. We will see
  in \S\ref{section:bott} that the higher direct images of
  $\Sym(\cS^*)$ vanish and so $\bF_\bullet$ gives a minimal free
  resolution for $\cO_Y$.

  Since $\cT^* = \ul{E} \otimes \cO(-1)$, we have $\bigwedge^d \cT^* =
  \bigwedge^d \ul{E} \otimes \cO(-d)$. So we can calculate the terms
  of $\bF_\bullet$ explicitly. For $i>0$, we have
  \[
  \bF_i = \bigwedge^{n+i-1} E \otimes \det F^* \otimes (S^{i-1}F)^*
  \otimes A(-n-i+1).
  \]
  The differentials can be calculated by noting that they will
  preserve the natural $\GL(E) \times \GL(F)$ action on $V$ and $Y$
  and that equivariant maps of this form are unique up to a choice of
  scalar. In fact, this construction works with $K = \bZ$, in which
  case we get uniqueness of scalars up to a choice of sign. A
  multilinear generalization of this complex, constructed using
  similar ideas, can be found in \cite{beks}.
\end{example}

\section{Representation theory.} \label{section:reptheory}

\subsection{Schur functors.} \label{section:schurfunctors}

For the material in this section, see \cite[Chapter 2]{weyman}. What
we call $\bS_\lambda$ is denoted by $\bL_{\lambda'}$ there.

\begin{definition} A partition $\lambda$ is a decreasing sequence of
  positive integers $\lambda_1 \ge \lambda_2 \ge \cdots \ge
  \lambda_n$. We represent this as a Young diagram by drawing
  $\lambda_i$ boxes left-justified in the $i$th row, starting from top
  to bottom. The dual partition $\lambda'$ is obtained by letting
  $\lambda'_i$ be the number of boxes in the $i$th column of
  $\lambda$. Given a box $b = (i,j) \in \lambda$, its {\bf content} is
  $c(b) = j-i$ and its {\bf hook length} is $h(b) = \lambda_i - i +
  \lambda'_j - j + 1$. If we have a sequence $(i,i,\dots,i)$ repeated $j$ times, we abbreviate by the notation $(i^j)$.
\end{definition}

\begin{example} Let $\lambda = (4,3,1)$. Then $\lambda' =
  (3,2,2,1)$. The contents and hook lengths are given as follows:
  \[
  c: \tableau[scY]{0,1,2,3|-1,0,1|-2} \quad \quad h:
  \tableau[scY]{6,4,3,1|4,2,1|1} \qedhere
  \]
\end{example}

\begin{definition} Let $R$ be a commutative ring and $E$ a free
  $R$-module. Let $\lambda$ be a partition with $n$ parts and write $m
  = \lambda_1$. We use $S^n E$ to denote the $n$th symmetric power of
  $E$. The {\bf Schur functor} $\bS_\lambda(E)$ is the image of the
  map
  \begin{align*}
    \bigwedge^{\lambda'_1} E \otimes \cdots \otimes
    \bigwedge^{\lambda'_m} E \xrightarrow{\Delta} E^{\otimes
      \lambda'_1} \otimes \cdots \otimes E^{\otimes \lambda'_m} =
    E^{\otimes \lambda_1} \otimes \cdots \otimes E^{\otimes \lambda_n}
    \xrightarrow{\mu} S^{\lambda_1} E \otimes \cdots \otimes
    S^{\lambda_n} E,
  \end{align*}
  where the maps are defined as follows. First, $\Delta$ is the
  product of the comultiplication maps $\bigwedge^i E \to E^{\otimes
    i}$ given by $e_1 \wedge \cdots \wedge e_i \mapsto \sum_{w \in
    \Sigma_i} \sgn(w) e_{w(1)} \otimes \cdots \otimes e_{w(i)}$. The
  equals sign is interpreted as follows: pure tensors in $E^{\otimes
    \lambda'_1} \otimes \cdots \otimes E^{\otimes \lambda'_m}$ can be
  interpreted as filling the Young diagram of $\lambda$ with vectors
  along the columns, which can be thought of as pure tensors in
  $E^{\otimes \lambda_1} \otimes \cdots \otimes E^{\otimes \lambda_n}$
  by reading via rows. Finally, $\mu$ is the multiplication map
  $E^{\otimes i} \to S^i E$ given by $e_1 \otimes \cdots \otimes e_i
  \mapsto e_1 \cdots e_i$.

  In particular, note that $\bS_\lambda E = 0$ if the number of parts
  of $\lambda$ exceeds $\rank E$.
\end{definition}

\begin{example} Take $\lambda = (3,2)$. Then the map is given by
  \[
  \begin{array}{ll} (e_1 \wedge e_2) \otimes (e_3 \wedge e_4) \otimes
    e_5 & \mapsto {\tableau[scY]{e_1,e_3,e_5|e_2,e_4}} -
    {\tableau[scY]{e_2,e_3,e_5|e_1,e_4}} -
    {\tableau[scY]{e_1,e_4,e_5|e_2,e_3}} +
    {\tableau[scY]{e_2,e_4,e_5|e_1,e_3}}\\*
    & \mapsto (e_1e_3e_5 \otimes e_2e_4) - (e_2e_3e_5 \otimes e_1e_4)
    - (e_1e_4e_5 \otimes e_2e_3) + (e_2e_4e_5 \otimes e_1e_3).
  \end{array} \qedhere
  \]
\end{example}

\begin{theorem} The Schur functor $\bS_\lambda E$ is a free
  $R$-module. If $\rank E = n$, then
  \[
  \rank \bS_\lambda E = \prod_{b \in \lambda} \frac{n + c(b)}{h(b)}.
  \]
\end{theorem}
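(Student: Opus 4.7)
The plan is to reduce the rank computation to a combinatorial count of semistandard Young tableaux (SSYT), and then invoke Stanley's hook content formula for that count.

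First, I would establish the standard basis theorem for the Schur functor: choosing an ordered basis $e_1,\dots,e_n$ of $E$, each SSYT $T$ of shape $\lambda$ with entries in $\{1,\dots,n\}$ produces a pure tensor in $\bigwedge^{\lambda'_1} E \otimes \cdots \otimes \bigwedge^{\lambda'_m} E$ by wedging the entries of each column top-to-bottom, and its image in $\bS_\lambda E$ under the map in the definition is the ``standard monomial'' associated to $T$. One then shows (via a straightening algorithm using Pl\"ucker-like two-term relations on pairs of adjacent columns) that these standard monomials span $\bS_\lambda E$, and linear independence reduces to the case of a field by base change, where it follows from a representation-theoretic argument for $\GL_n$. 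This is exactly Theorem 2.1.2 in Weyman's book, so I would cite it rather than redo it.

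Once this is established, $\rank \bS_\lambda E$ equals the number of SSYT of shape $\lambda$ with entries in $\{1,\dots,n\}$, and it remains to prove the purely combinatorial identity
\[
\# \mathrm{SSYT}(\lambda, n) = \prod_{b \in \lambda} \frac{n+c(b)}{h(b)}.
\]
The cleanest route is through the principal specialization of the Schur polynomial,
\[
s_\lambda(1,q,q^2,\dots,q^{n-1}) = q^{\sum (i-1)\lambda_i} \prod_{b \in \lambda} \frac{1-q^{n+c(b)}}{1-q^{h(b)}},
\]
whose $q \to 1$ limit is the desired formula (since each factor limits to $(n+c(b))/h(b)$ and $s_\lambda(1,\dots,1)$ counts SSYT by the monomial expansion). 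The $q$-identity itself can be derived from the Jacobi--Trudi determinant applied to complete symmetric functions, or bijectively via Lindstr\"om--Gessel--Viennot nonintersecting lattice paths. Alternatively, over $\bC$ one can appeal to the Weyl dimension formula for the irreducible $\GL_n$-representation of highest weight $\lambda$ and observe both sides are polynomials in $n$, so agreement for large $n$ over $\bC$ forces the universal identity.

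The main obstacle is precisely the hook content identity: reconciling the ``row'' indexing (shifted parts $\lambda_i + n - i$) coming from Weyl or Jacobi--Trudi with the ``box'' indexing (contents and hook lengths) on the right-hand side requires a careful product re-indexing. The standard basis theorem in step one is essentially bookkeeping once one trusts the straightening algorithm, whereas step two is where any new proof would spend its energy; in practice I would simply cite Stanley and move on.
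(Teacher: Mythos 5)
The paper states this theorem without proof, simply pointing to \cite[Chapter 2]{weyman} for the material of this section, so there is no in-paper argument to compare against. Your outline is the standard one and matches the treatment in Weyman: the freeness of $\bS_\lambda E$ together with the identification of a basis indexed by semistandard tableaux (the standard basis theorem via straightening), followed by the hook content formula to count those tableaux. The principal specialization route you sketch for the hook content identity is one of several equivalent derivations (Jacobi--Trudi, Lindstr\"om--Gessel--Viennot, or Weyl dimension formula plus polynomiality), and any of them closes the argument; you have correctly isolated the reindexing between row data and box data as the only technical friction. In short, your proposal is a correct expansion of the citation the paper relies on, and citing Weyman and Stanley as you suggest is exactly what one would do.
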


The construction of $\bS_\lambda E$ is functorial with respect to
$E$. This has two consequences: $\bS_\lambda E$ is naturally a
representation of $\GL(E)$, and we can also construct $\bS_\lambda
\cE$ when $\cE$ is a vector bundle.

If $\rank E = n$, then we have $\bS_{\lambda} E \otimes \det E = \bS_{(\lambda_1 + 1, \dots, \lambda_n + 1)} E$. Using this, it makes sense to define $\bS_\lambda E$ when $\lambda$ is any weakly decreasing sequence of integers. Furthermore, we have $\bS_\lambda(E^*) = \bS_{-\lambda_n, \dots, -\lambda_1} E$ \cite[Exercise 2.18]{weyman} and over a field of characteristic 0, the isomorphism $\bS_\lambda(E^*) = (\bS_\lambda E)^*$ \cite[Proposition 2.1.18]{weyman}.

\subsection{Descriptions of some homogeneous spaces.} \label{section:homogeneousspaces}

Let $E$ be a vector space of rank $N$. We let $\Fl(E)$ be the flag
variety of $E$. Its $K$-valued points are complete flags of subspaces
$E_\bullet : E_1 \subset E_2 \subset \cdots E_N = E$ such that $\rank
E_i = i$. The trivial bundle $\ul{E}$ contains a tautological flag of
subbundles $\cR_1 \subset \cR_2 \subset \cdots \subset \cR_N = \ul{E}$
where
\[
\cR_i = \{(x, E_\bullet) \in E \times \Fl(E) \mid x \in E_i\}.
\]
Given a subset $S \subset \{1, \dots, N-1\}$, we can also consider the
partial flag varieties $\Fl(S; E)$ whose $K$-valued points only
are partial flags whose ranks are the elements in $S$. Then $\ul{E}$ has a tautological partial flag of subbundles $\cR_i$ ($i \in S$).

Now assume that $E$ is equipped with a symplectic form $\omega$ and set $n = N/2$. We say that a subspace $U \subset E$ is isotropic if
$\omega(u,u') = 0$ for all $u,u' \in U$. Also, for any subspace $U$,
we set $U^\perp = \{x \in E \mid \omega(x,u) = 0 \text{ for all } u
\in U\}$. Then $\rank U + \rank U^\perp = \rank E$ and $U$ is
isotropic if and only if $U \subseteq U^\perp$.

We define the symplectic flag variety to be the subvariety
$\Fl_\omega(E)$ of the partial flag variety $\Fl(1,\dots,n; E)$
consisting of flags $E_1 \subset E_2 \subset \cdots \subset E_n$ such
that each $E_i$ is isotropic. We also let $\cR_1 \subset \cR_2 \subset
\cdots \subset \cR_n$ denote the restriction of the tautological
bundles to $\Fl_\omega(E)$. We can also define the subbundles
$\cR_i^\perp = \{(x, E_\bullet) \in E \times \Fl_\omega(E) \mid x \in
E_i^\perp\}$. Note that we have
\[
0 \subset \cR_1 \subset \cR_2 \subset \cdots \subset \cR_n =
\cR_n^\perp \subset \cR_{n-1}^\perp \subset \cdots \subset \cR_1^\perp
\subset \ul{E}.
\]
Given a subset $S \subset \{1, \dots, n\}$, we can also define partial
$\omega$-isotropic flag varieties $\Fl_\omega(S;E)$. When $S = \{i\}$
is a singleton, we also write $\Gr_\omega(i,E) = \Fl_\omega(\{i\}; E)$
and call it the $\omega$-isotropic Grassmannian. 

\subsection{Borel--Weil--Bott theorem.} \label{section:bott}

For the material in this section, see \cite[Chapter 4]{weyman}.

Let $S = \{i_1 < i_2 < \cdots < i_k\}$ be a subset of $\{1,\dots,N\}$
and consider the partial flag variety $\Fl(S; E)$. For each
$j=1,\dots,k+1$, let $\lambda^{(j)}$ be a weakly decreasing sequence
of integers of length $i_j - i_{j-1}$ (set $i_0 = 0$ and $i_{k+1} =
N$). Let $\lambda$ be the sequence obtained by concatenating
$\lambda^{(1)}, \lambda^{(2)}, \dots, \lambda^{(k+1)}$. Set
\[
\cR(\lambda) = \bigotimes_{j=1}^{k+1} \bS_{\lambda^{(j)}}((\cR_{i_j} /
\cR_{i_{j-1}})^*). 
\]

\begin{theorem}[Borel--Weil] If $\lambda$ is a weakly decreasing
  sequence of integers, then we have a $\GL(E)$-equivariant
  isomorphism
  \[
  \rH^0(\Fl(S;E); \cR(\lambda)) = \bS_\lambda(E^*),
  \]
  and all higher cohomology of $\cR(\lambda)$ vanishes.
\end{theorem}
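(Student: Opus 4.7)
The plan is to reduce the partial flag case to the case of line bundles on the full flag variety $\Fl(E)$ via the natural forgetful projection $\pi \colon \Fl(E) \to \Fl(S;E)$, and to treat the full flag case by induction on $N = \rank E$. The fiber of $\pi$ over a partial flag $E_{i_1} \subset \cdots \subset E_{i_k}$ is the product of full flag varieties $\prod_{j=1}^{k+1} \Fl(E_{i_j}/E_{i_{j-1}})$, with the convention $E_{i_0} = 0$, $E_{i_{k+1}} = E$. Viewing $\lambda$ itself as a sequence of length $N$, it defines a line bundle $\cL := \bigotimes_i ((\cR_i/\cR_{i-1})^*)^{\otimes \lambda_i}$ on $\Fl(E)$, which in the paper's notation is $\cR(\lambda)$ relative to the choice $S = \{1,\dots,N-1\}$.

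The key geometric step is to compute $R^q \pi_* \cL$. Restricted to any fiber of $\pi$, the bundle $\cL$ becomes an external tensor product of line bundles on full flag varieties of rank $i_j - i_{j-1}$ indexed by the subsequences $\lambda^{(j)}$. Since the concatenation $\lambda$ is weakly decreasing, so is each $\lambda^{(j)}$. By induction on rank each factor has $\rH^0 = \bS_{\lambda^{(j)}}((E_{i_j}/E_{i_{j-1}})^*)$ with all higher cohomology vanishing, and the K\"unneth formula assembles these fiberwise. Cohomology and base change, valid because the fibration is locally trivial and the fiberwise higher cohomology vanishes uniformly, then yields $\pi_* \cL = \cR(\lambda)$ and $R^q \pi_* \cL = 0$ for $q > 0$. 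The Leray spectral sequence for $\pi$ collapses to give $\rH^\bullet(\Fl(S;E); \cR(\lambda)) = \rH^\bullet(\Fl(E); \cL)$, reducing the statement to the full flag case.

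For the full flag case I would iterate a projection $p \colon \Fl(E) \to \bP(E)$ sending a complete flag to its line $\cR_1$; the fiber over $\cR_1$ is $\Fl(E/\cR_1)$. By the inductive hypothesis applied on the fibers, the fiberwise $\rH^0$ equals $\bS_{(\lambda_2,\dots,\lambda_N)}((E/\cR_1)^*)$ with no higher fiberwise cohomology, so $p_* \cL$ is the vector bundle on $\bP(E)$ obtained by twisting that Schur bundle by $(\cR_1^*)^{\otimes \lambda_1}$. The dominance hypothesis $\lambda_1 \ge \lambda_2$ ensures that this bundle is globally generated and has no higher cohomology (Serre vanishing in the generated case), and another Leray argument handles the remaining cohomology. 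Iterating this process one arrives at a point, producing $\rH^0(\Fl(E);\cL)$ as a sub-quotient of a tensor product of symmetric powers of $E^*$.

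I expect the main obstacle to be identifying the resulting global sections, which a priori form an irreducible $\GL(E)$-representation of a predictable highest weight, with the specific functorial construction $\bS_\lambda(E^*)$ from Section~\ref{section:schurfunctors}. This matching is a highest-weight bookkeeping exercise: one exhibits a canonical highest-weight section (for instance, the product of Pl\"ucker-type sections built from the tautological inclusions $\det \cR_i^* \injects \bigwedge^i \ul{E}^*$) and compares its weight against the explicit image description of $\bS_\lambda(E^*)$ in terms of comultiplication and multiplication maps, closing the identification via irreducibility together with the rank formula (hook-length formula) quoted in the previous section.
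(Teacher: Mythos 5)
The paper does not actually supply a proof of this Borel--Weil statement; it simply cites Weyman's book \cite[Chapter~4]{weyman}, so there is no ``paper proof'' to compare against. Reviewing your plan on its own terms, the reduction from $\Fl(S;E)$ to the full flag variety via the forgetful map $\pi$, Leray, and cohomology-and-base-change on the locally trivial fibration is sound. The gap is in the full flag step.

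Your projection $p\colon\Fl(E)\to\bP(E)$ produces the bundle $p_*\cL\cong(\cR^*)^{\otimes\lambda_1}\otimes\bS_{(\lambda_2,\dots,\lambda_N)}(\cQ^*)$, and you assert that it has no higher cohomology because ``the dominance hypothesis ensures it is globally generated, and then Serre vanishing applies.'' This inference is not valid: Serre vanishing is an asymptotic statement about $\cE(n)$ for $n\gg0$, and global generation does not imply vanishing of higher cohomology (already $\cO_{\bP^n}$ is globally generated, yet the implication ``globally generated $\Rightarrow$ acyclic'' is simply not a theorem). Worse, the statement you need here --- that $(\cR^*)^{\otimes\lambda_1}\otimes\bS_{(\lambda_2,\dots,\lambda_N)}(\cQ^*)$ on $\bP(E)$ is acyclic with prescribed sections --- is itself the case $S=\{1\}$ of the theorem, for a vector space of the \emph{same} rank $N$, so your induction on $N$ does not reach it: the argument is circular. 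The standard way to repair the geometric induction is to project off only one step of the flag at a time, so the fibers are $\bP^1$'s; the restriction of $\cL$ to such a fiber is $\cO_{\bP^1}(\lambda_i-\lambda_{i+1})$, and here the dominance hypothesis $\lambda_i\geq\lambda_{i+1}$ really does give acyclicity directly. Iterating, the pushforward is computed by Pieri/tensor-product rules that can be kept functorial. (Weyman's own proof in \cite[Chapter~4]{weyman} proceeds differently, via the Cauchy formula and Schur complexes, which also yields the theorem over $\bZ$.)

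A secondary issue: your final identification of the space of sections with $\bS_\lambda(E^*)$ appeals to irreducibility and a highest-weight count. That closes the argument over an algebraically closed field of characteristic $0$, but the theorem as stated in the paper (and as used later) is meant over an arbitrary base, where the Schur module may fail to be an irreducible representation. The identification needs to be produced functorially --- for instance, by exhibiting, at each step of the one-step induction above, a natural transformation from the explicit tensor description of $\bS_\lambda$ to the pushforward and checking it is an isomorphism on ranks --- rather than by highest-weight bookkeeping.
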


Now we consider the analogue for the symplectic group. Let $\lambda$ be a partition with at most $n$ parts. We restrict the line bundle $\cR(\lambda)$ on the flag variety $\Fl(E)$ to the symplectic flag variety $\Fl_\omega(E)$ and denote its space of sections by
\begin{align} \label{eqn:symplecticschur}
\bS_{[\lambda]}(E) = \rH^0(\Fl_\omega(E); \cR(\lambda)).
\end{align}
By considering a relative situation, this definition makes sense for any vector bundle $E$ over a scheme $X$ which is equipped with a nondegenerate skew-symmetric $\cO_X$-bilinear form $\bigwedge^2 E^* \to \cO_X$. (More generally, the form could take values in a line bundle, but we will not use this generalization.) For all tautological subbundles $\cR$ on any symplectic flag variety, the restriction of the symplectic form to the quotient bundle $\cR^\perp / \cR$ is nondegenerate.

Now let $S = \{i_1 < \cdots < i_k\}$ be a subset of $\{1, \dots, n\}$
and let $\lambda^{(j)}$ be as before, except now we assume that
$\lambda^{(k+1)}$ only has nonnegative integers. We set
\[
\cR[\lambda] = \bS_{[\lambda^{(k+1)}]}(\cR_{i_k}^\perp / \cR_{i_k})
\otimes \bigotimes_{j=1}^k \bS_{\lambda^{(j)}}((\cR_{i_j} /
\cR_{i_{j-1}})^*).
\]

\begin{theorem}[Borel--Weil] 
  If $\lambda$ is a weakly decreasing sequence of nonnegative
  integers, then we have a $\Sp(E)$-equivariant isomorphism
  \[
  \rH^0(\Fl_\omega(S;E); \cR[\lambda]) = \bS_{[\lambda]} E
  \]
  and all higher cohomology of $\cR[\lambda]$ vanishes.
\end{theorem}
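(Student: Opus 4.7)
The plan is to reduce everything to the case $S = \{1,\dots,n\}$, where $\rH^0(\Fl_\omega(E); \cR(\lambda)) = \bS_{[\lambda]}E$ is the very definition \eqref{eqn:symplecticschur}, via the forgetful projection
\[
\pi \colon \Fl_\omega(E) \to \Fl_\omega(S;E)
\]
and the Leray spectral sequence, with induction on $n$.

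First I would analyze the fibers of $\pi$. A preimage of $(E_{i_1} \subset \cdots \subset E_{i_k})$ is the space of refinements to a full isotropic flag. Such a refinement consists of a full flag of each quotient $E_{i_j}/E_{i_{j-1}}$ for $j = 1, \dots, k$ (with $i_0 = 0$), together with a full isotropic flag of the symplectic vector space $E_{i_k}^\perp / E_{i_k}$, whose form is nondegenerate as recorded just before \eqref{eqn:symplecticschur}. Hence each fiber is canonically a product of ordinary (Type A) flag varieties $\Fl(E_{i_j}/E_{i_{j-1}})$ for $j = 1, \dots, k$ and a single symplectic flag variety of rank $n - i_k$, and the same description works relatively.

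Next I would compute $R\pi_* \cR(\lambda)$ by Künneth on fibers. On the $j$-th Type A factor, the restriction of $\cR(\lambda)$ is the line bundle attached to the segment $\lambda^{(j)}$, whose cohomology is $\bS_{\lambda^{(j)}}((E_{i_j}/E_{i_{j-1}})^*)$ concentrated in degree $0$, by the Borel--Weil theorem for $\GL$ already stated. On the symplectic factor the restriction is the line bundle attached to $\lambda^{(k+1)}$, and by the inductive hypothesis on $n$ (which applies since $n - i_k < n$ whenever $S$ is nonempty) its cohomology is $\bS_{[\lambda^{(k+1)}]}(E_{i_k}^\perp/E_{i_k})$ concentrated in degree $0$. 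Multiplying, $R^q\pi_* \cR(\lambda)$ vanishes for $q > 0$ and $\pi_* \cR(\lambda) = \cR[\lambda]$ by the construction of $\cR[\lambda]$.

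The Leray spectral sequence now collapses and gives $\rH^p(\Fl_\omega(S;E); \cR[\lambda]) \cong \rH^p(\Fl_\omega(E); \cR(\lambda))$ for every $p$. In degree $p = 0$ this is the desired isomorphism with $\bS_{[\lambda]}E$. The main obstacle, and the only nontrivial remaining input, is the vanishing of $\rH^p(\Fl_\omega(E); \cR(\lambda))$ for $p > 0$: this is the Borel--Weil--Bott theorem for $\Sp(E)$ applied to the dominant weight encoded by $\lambda$ (or Kempf's vanishing theorem in positive characteristic). This serves both as the base case $S = \{1, \dots, n\}$ of the theorem itself and as the input that makes the induction on $n$ close. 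Since this vanishing is not proved in the paper, I would cite it rather than reproving it, exactly as the $\GL$ version was cited above.
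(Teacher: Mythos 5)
Your argument is correct, but it is worth saying up front that the paper itself gives no proof of this statement: the whole of \S\ref{section:bott} is background material cited from \cite[Chapter 4]{weyman}, so there is no ``paper's proof'' to compare against. What you have written is a legitimate way of organizing the standard proof: reduce the partial-flag case to the full-flag case by pushing forward along the forgetful fibration $\pi\colon \Fl_\omega(E)\to\Fl_\omega(S;E)$, identify $R\pi_*\cR(\lambda)$ with $\cR[\lambda]$ fiberwise by K\"unneth together with $\GL$ Borel--Weil on the Type A factors and the inductive hypothesis on the inner symplectic factor, and then appeal to the Leray spectral sequence. Two small remarks. First, the identity $\pi_*\cR(\lambda)=\cR[\lambda]$ is not ``by construction'' as you say---$\cR[\lambda]$ is defined by an explicit associated-bundle formula, not as a pushforward---but your fiberwise computation does establish it, so this is only a wording issue. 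Second, the content you ultimately cite (vanishing of higher cohomology of $\cR(\lambda)$ on the full isotropic flag variety, i.e.\ the $\Sp$ Borel--Weil--Bott theorem for dominant weights, or Kempf's vanishing theorem) is itself a special case of the very theorem you are proving; so what the argument really accomplishes is a reduction of the general partial-flag statement to the minimal (Borel subgroup) case. That is a perfectly reasonable thing to do, and it mirrors what the paper does by citing the full result from the literature. If you preferred to cite Kempf vanishing for arbitrary $G/P$ directly, you could dispense with the induction on $n$ and the Leray argument entirely, at the cost of a marginally stronger citation.
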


Now we discuss what happens when $\lambda$ is not a weakly decreasing
sequence of integers. For this, we now need to assume that the
characteristic of the ground field is 0. We first handle the partial
flag varieties.

Define the vector $\rho = (N-1, N-2, \dots, 1, 0)$. Given a
permutation $w \in \Sigma_N$, we define the dotted action of $w$ on a
sequence of integers $\alpha$ of length $N$ by
\[
w^\bullet(\alpha) = w(\alpha + \rho) - \rho.
\]
We define the length of a permutation $w$ to be $\ell(w) = \#\{(i, j)
\mid i < j,\ w(i) > w(j)\}$. Alternatively, let $s_i$ be the simple
transposition that swaps $i$ and $i+1$. Then $s_1, \dots, s_{N-1}$
generate $\Sigma_N$, and we could also define $\ell(w)$ to be the
minimal number $k$ so that we can write $w = s_{i_1} \cdots s_{i_k}$.

\begin{theorem}[Bott] \label{thm:glbott}
  Assume that the field $K$ has characteristic $0$ and let $\lambda$
  be as before. Then exactly one of the following two cases occurs:
  \begin{compactitem}
  \item There exists a non-identity $w \in \Sigma_N$ such that
    $w^\bullet(\lambda) = \lambda$. In this case, all cohomology of
    $\cR(\lambda)$ vanishes.
  \item There is a unique $w \in \Sigma_N$ such that
    $w^\bullet(\lambda) = \mu$ is a decreasing sequence. In this case,
    we have a $\GL(E)$-equivariant isomorphism
    \[
    \rH^{\ell(w)}(\Fl(S; E); \cR(\lambda)) = \bS_{\mu}(E^*)
    \]
    and all other cohomology of $\cR(\lambda)$ vanishes.
  \end{compactitem}
\end{theorem}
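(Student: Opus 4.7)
The plan is a two-step reduction: the Leray spectral sequence reduces the partial flag variety to the complete flag variety, and then an induction on the inversions of $\lambda+\rho$, driven by Serre duality on a relative $\bP^1$-bundle, reduces the general $\lambda$ to the weakly decreasing case handled by Borel--Weil. For the first step, the forgetful projection $\pi\colon \Fl(E) \to \Fl(S;E)$ is smooth and proper with fibers isomorphic to products of complete flag varieties of the successive quotients $\cR_{i_j}/\cR_{i_{j-1}}$. Since each $\lambda^{(j)}$ is already weakly decreasing, Borel--Weil applied fiberwise gives $\pi_*\cR_{\Fl(E)}(\lambda) = \cR_{\Fl(S;E)}(\lambda)$ with vanishing higher direct images, and Leray identifies the two cohomologies. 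Thus it suffices to prove the theorem on $\Fl(E)$.

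On $\Fl(E)$, if $\lambda$ is weakly decreasing we are done. Otherwise choose an ascent $i$ (so $\lambda_i < \lambda_{i+1}$) and consider the projection $\pi_i\colon \Fl(E) \to \Fl(\hat\imath;E)$ forgetting only the $i$-th subspace. This is a $\bP^1$-bundle, namely the projectivization of $\cR_{i+1}/\cR_{i-1}$, and the restriction of $\cR(\lambda)$ to a fiber is $\cO_{\bP^1}(\lambda_i-\lambda_{i+1}) = \cO_{\bP^1}(-k)$ with $k = \lambda_{i+1}-\lambda_i \ge 1$. When $k=1$, equivalently $(\lambda+\rho)_i = (\lambda+\rho)_{i+1}$ and hence $s_i^\bullet\lambda = \lambda$, the sheaf $\cO_{\bP^1}(-1)$ has no cohomology, so $R^\bullet\pi_{i*}\cR(\lambda) = 0$ and Leray gives $\rH^*(\Fl(E);\cR(\lambda)) = 0$: this is the first conclusion of the theorem.

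When $k \ge 2$, $\pi_{i*}\cR(\lambda) = 0$ while $R^1\pi_{i*}\cR(\lambda)$ is a line bundle on $\Fl(\hat\imath;E)$. Serre duality along the fibers, together with careful bookkeeping of the relative dualizing sheaf and the two tautological line subquotients of $\cR_{i+1}/\cR_{i-1}$, yields the identification $R^1\pi_{i*}\cR(\lambda) \cong \cR(s_i^\bullet\lambda)$; the $\rho$-shift in the definition $w^\bullet\alpha = w(\alpha+\rho)-\rho$ is precisely what absorbs the $-2$ contributed by $\omega_{\bP^1}$. Leray then gives
\[
\rH^j(\Fl(E); \cR(\lambda)) \;\cong\; \rH^{j-1}(\Fl(\hat\imath;E); \cR(s_i^\bullet\lambda)),
\]
and the right-hand side lifts to $\rH^{j-1}(\Fl(E); \cR(s_i^\bullet\lambda))$ via the first step applied to $S = \{1,\dots,N-1\}\setminus\{i\}$, legitimate because $s_i^\bullet\lambda$ is weakly decreasing across positions $i,i+1$. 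Since $s_i^\bullet\lambda$ has one fewer inversion of $\lambda+\rho$, induction finishes.

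Termination is clean. If $\lambda+\rho$ is regular, every ascent of $\lambda$ satisfies $\lambda_{i+1}-\lambda_i \ge 2$, so the $k\ge 2$ case always applies and the induction reaches the unique weakly decreasing rearrangement $\mu = w^\bullet\lambda$ after $\ell(w)$ shifts, yielding $\rH^{\ell(w)}(\Fl(E);\cR(\lambda)) = \bS_\mu(E^*)$ with all other cohomology zero; uniqueness of $w$ is the fact that $\Sigma_N$ acts freely on the orbit of a regular weight. If $\lambda+\rho$ is singular, the inversion count is finite so the $k\ge 2$ case can be applied only finitely often; eventually some ascent has $\lambda_{i+1}-\lambda_i = 1$, the $k=1$ case fires, and all cohomology vanishes, while the non-trivial stabilizer of $\lambda+\rho$ supplies a non-identity $w$ with $w^\bullet\lambda = \lambda$. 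The main obstacle is the identification $R^1\pi_{i*}\cR(\lambda) \cong \cR(s_i^\bullet\lambda)$ in the inductive step: one has to reconcile the purely topological $-2$ twist from $\omega_{\bP^1}$ with the equivariant structure of the line bundles $(\cR_\ell/\cR_{\ell-1})^*$, and getting this matching right is what makes the $\rho$-shift in the dot action geometrically inevitable.
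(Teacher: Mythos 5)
The paper does not prove this theorem; it simply cites \cite[Chapter 4]{weyman}, so there is no ``paper's proof'' to compare against. Your strategy --- reduce to the complete flag variety via Leray for the forgetful projection, then induct on the inversion number of $\lambda + \rho$ by passing through the $\bP^1$-bundles $\pi_i\colon\Fl(E)\to\Fl(\hat\imath;E)$ and applying relative Serre duality --- is in fact the standard Demazure-style argument and is exactly the method used in that reference, so in spirit you have the right proof.

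There is, however, one concretely false assertion in the inductive step. When $k = \lambda_{i+1}-\lambda_i \ge 2$, you claim ``$R^1\pi_{i*}\cR(\lambda)$ is a line bundle on $\Fl(\hat\imath;E)$.'' This is only true when $k=2$. Along each fiber $\bP^1$, $\cR(\lambda)$ restricts to $\cO(-k)$, so $R^1\pi_{i*}\cR(\lambda)$ is locally free of rank $\rh^1(\bP^1,\cO(-k)) = k-1$. Consequently the subsequent identification $R^1\pi_{i*}\cR(\lambda)\cong\cR(s_i^\bullet\lambda)$ is type-confused as you state it: $\cR(s_i^\bullet\lambda)$ on $\Fl(E)$ is a line bundle and lives on the wrong space. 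The correct statement is
\[
R^1\pi_{i*}\cR_{\Fl(E)}(\lambda) \;\cong\; \pi_{i*}\cR_{\Fl(E)}(s_i^\bullet\lambda),
\]
an isomorphism of rank-$(k-1)$ bundles on $\Fl(\hat\imath;E)$ (both are $S^{k-2}(\cR_{i+1}/\cR_{i-1})$ twisted by the appropriate determinants; relative Serre duality together with the rank-2 identity $\cE^* \cong \cE\otimes(\det\cE)^{-1}$ makes the two sides agree, and this is where the $\rho$-shift becomes visible). In the paper's Schur-functor notation this pushforward is precisely $\cR_{\Fl(\hat\imath;E)}(s_i^\bullet\lambda)$, so your next line ``$\rH^{j-1}(\Fl(\hat\imath;E);\cR(s_i^\bullet\lambda))$'' is salvageable under that reading, and the final lift back to $\Fl(E)$ via the first step is fine. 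But as written, the proof asserts an object has rank $1$ when it has rank $k-1$, and a reader following your words would lose the thread exactly where you declare the key computation to be a ``line bundle'' matter. Replace the line-bundle claim by the rank-$(k-1)$ computation, or equivalently chain two applications of Leray: $\rH^j(\Fl(E);\cR(\lambda))\cong \rH^{j-1}(\Fl(\hat\imath;E); R^1\pi_{i*}\cR(\lambda)) \cong \rH^{j-1}(\Fl(\hat\imath;E); \pi_{i*}\cR(s_i^\bullet\lambda)) \cong \rH^{j-1}(\Fl(E); \cR(s_i^\bullet\lambda))$, the last isomorphism holding because $\cR(s_i^\bullet\lambda)$ has vanishing higher direct images under $\pi_i$ (fiber degree $k-2\ge 0$). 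With that fix the argument is complete.
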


An example that uses the previous theorem is given in the proof of Theorem~\ref{thm:w39calc}.

Now we consider Bott's theorem for the symplectic flag varieties
$\Fl_\omega(S; E)$. Now we set $\rho = (N,
N-1, \dots, 2, 1)$. We replace the symmetric group $\Sigma_N$ with the
group of signed permutations $W = \Sigma_N \ltimes (\bZ/2)^N$, which
we think of as $N \times N$ signed permutation matrices.

We consider the generators $s_1, \dots, s_N$ for $W$. The meaning of
$s_1, \dots, s_{N-1}$ is the same as for the symmetric group
$\Sigma_N$, and $s_N$ is the diagonal matrix ${\rm diag}(1, \dots, 1,
-1)$. Now given $w \in W$, we define $\ell(w)$ to be the minimal
number $k$ so that we can write $w = s_{i_1} \cdots s_{i_k}$.  

Then the definition of the dotted action of $W$ remains the same, and
the analogue of Bott's theorem holds for the bundles $\cR[\lambda]$ on
$\Fl_\omega(S; E)$.

\begin{theorem}[Bott] 
  Assume that the field $K$ has characteristic $0$ and let $\lambda$
  be as before. Then exactly one of the following two cases occurs:
  \begin{compactitem}
  \item There exists a non-identity $w \in W$ such that
    $w^\bullet(\lambda) = \lambda$. In this case, all cohomology of
    $\cR[\lambda]$ vanishes.
  \item There is a unique $w \in \Sigma_N$ such that
    $w^\bullet(\lambda) = \mu$ is a weakly decreasing sequence of
    nonnegative integers. In this case, we have an
    $\Sp(E)$-equivariant isomorphism
    \[
    \rH^{\ell(w)}(\Fl_\omega(S; E); \cR[\lambda]) = \bS_{[\mu]}(E^*)
    \]
    and all other cohomology of $\cR[\lambda]$ vanishes.
  \end{compactitem}
\end{theorem}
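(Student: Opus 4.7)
The plan is to follow the standard proof of Bott's theorem by reduction to an iterated tower of $\bP^1$-fibrations, where the cohomology computation on each fiber is controlled by the dotted action of a simple reflection.

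First I would reduce to the case $S = \{1,\dots,n\}$, i.e.\ the full symplectic flag variety $\Fl_\omega(E)$. The natural projection $\pi : \Fl_\omega(E) \to \Fl_\omega(S;E)$ has fibers that are products of (type $A$) flag varieties of the successive quotients $\cR_{i_j}/\cR_{i_{j-1}}$ together with the full symplectic flag variety of $\cR_{i_k}^\perp/\cR_{i_k}$. Applying both Borel--Weil statements already proved in this subsection to each factor of each fiber shows that $\cR[\lambda] = \pi_* \cL(\lambda)$ where $\cL(\lambda)$ is the line bundle on $\Fl_\omega(E)$ determined by the entries of $\lambda$, and that all higher direct images vanish. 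By the Leray spectral sequence the cohomology of $\cR[\lambda]$ on $\Fl_\omega(S;E)$ equals the cohomology of $\cL(\lambda)$ on $\Fl_\omega(E)$.

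Next I would build $\Fl_\omega(E)$ as an iterated $\bP^1$-fibration. For each simple reflection $s_i$ of $W$ there is a forgetful projection $p_i : \Fl_\omega(E) \to \Fl_\omega(\{1,\dots,\widehat{i},\dots,n\};E)$; when $i < n$ the fiber is $\bP(\cR_{i+1}/\cR_{i-1}) \cong \bP^1$, and for $i = n$ the fiber is the isotropic Grassmannian of lines in the rank-$2$ symplectic space $\cR_{n-1}^\perp/\cR_{n-1}$, which is again $\bP^1$. A direct computation identifies the restriction of $\cL(\lambda)$ to each fiber with $\cO(d_i)$, where $d_i = \lambda_i - \lambda_{i+1} - 1$ for $i<n$ and $d_n = -2\lambda_n - 1$. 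Comparing with $\rho = (n,n-1,\dots,1)$ one checks that $d_i \ge 0$, $d_i = -1$, $d_i \le -2$ correspond respectively to $\lambda$ being strictly on the ``dominant'' side of the hyperplane fixed by $s_i^\bullet$, on that hyperplane, or on the opposite side, and in the last case the Leray spectral sequence for $p_i$ yields the shift $\rH^{j+1}(\Fl_\omega(E);\cL(\lambda)) = \rH^j(\Fl_\omega(E);\cL(s_i^\bullet \lambda))$.

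I would then conclude by induction on the $\ell^1$-distance from $\lambda+\rho$ to the strictly dominant chamber of $W$. If some simple reflection fixes $\lambda$ under the dotted action, then in the corresponding $p_i$-fibration the relative degree is $-1$ and all cohomology vanishes, giving the first bullet. Otherwise, one may always find a simple $s_i$ moving $\lambda$ strictly closer to the dominant chamber; by the shift above this reduces the cohomology of $\cL(\lambda)$ in degree $j$ to that of $\cL(s_i^\bullet \lambda)$ in degree $j-1$. Iterating yields a reduced expression $w = s_{i_\ell}\cdots s_{i_1}$ with $w^\bullet \lambda$ weakly decreasing and nonnegative; the original Borel--Weil theorem for $\Sp$ finishes the computation, the cohomology appearing in degree $\ell = \ell(w)$, and $w$ is unique because any other element moving $\lambda$ into the dominant chamber would have to be longer. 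The main obstacle is the bookkeeping in the third step: verifying the formula for the relative degree $d_i$ in all cases and, especially for the reflection $s_n$, identifying the factor of $2$ coming from the symplectic pairing with the coefficient in the dotted action, matching the formalism of Weyman, Chapter 4, to which one may appeal for full details.
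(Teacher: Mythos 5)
The paper does not give its own proof of this theorem; it simply cites Weyman, Chapter 4. Your proposal follows the same strategy that reference uses (Leray reduction to the full symplectic flag variety, then iterated $\bP^1$-fibrations, with the cohomology shift controlled by the dotted action of simple reflections), so the architecture is correct. The uniqueness of $w$ at the end is better justified by the simple-transitivity of the $W$-action on Weyl chambers than by a length comparison, but this is a cosmetic point.

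The genuine problem is exactly the step you flag as "the main obstacle": your formulas for the relative degrees $d_i$ are wrong, and they are wrong in a way that breaks the case analysis rather than just shifting a convention. With the paper's bundle $\cR[\lambda] = \bigotimes_{i=1}^n \bigl((\cR_i/\cR_{i-1})^*\bigr)^{\otimes \lambda_i}$ on $\Fl_\omega(E)$, the restriction of $\cR_i/\cR_{i-1}$ to the $\bP^1$-fiber $\bP(\cR_{i+1}/\cR_{i-1})$ of $p_i$ ($i<n$) is the tautological $\cO(-1)$ and $\cR_{i+1}/\cR_i$ is $\cO(1)$ up to a twist constant along the fiber, so $d_i = \lambda_i - \lambda_{i+1}$, \emph{not} $\lambda_i - \lambda_{i+1} - 1$. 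Likewise for $i=n$: the fiber is $\bP(\cR_{n-1}^\perp/\cR_{n-1})$ with $\cR_n/\cR_{n-1}$ restricting to $\cO(-1)$, so $d_n = \lambda_n$, \emph{not} $-2\lambda_n - 1$. There is no factor of $2$: the long root $\alpha_n = 2\eps_n$ has coroot $\alpha_n^\vee = \eps_n$, so $\langle \lambda+\rho, \alpha_n^\vee\rangle = \lambda_n + 1$ with coefficient $1$, and the $\SL_2$ coming from $\alpha_n$ acts on the rank-$2$ symplectic bundle $\cR_{n-1}^\perp/\cR_{n-1}$ exactly as in the type $A$ case. With the corrected formulas, $d_i = -1$ is equivalent to $\lambda_i - \lambda_{i+1} = -1$ (resp.\ $\lambda_n = -1$), which is precisely $s_i^\bullet\lambda = \lambda$ (resp.\ $s_n^\bullet\lambda = \lambda$), and the dichotomy in your induction goes through. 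With your stated formulas, $d_i = -1$ would correspond to $\lambda_i = \lambda_{i+1}$ and $d_n = -1$ to $\lambda_n = 0$, neither of which is the dot-fixed condition, so the vanishing/shifting trichotomy would not line up with the walls of the affine Weyl chambers and the induction would fail. You were right to be suspicious of the bookkeeping; the fix is simply that the relative degree is the pairing with the simple \emph{coroot}, with no $\rho$-shift and no factor of $2$.
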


The symplectic form gives $\Sp(E)$-equivariant isomorphisms $\bS_{[\mu]}(E^*) = \bS_{[\mu]} E$ for all $\mu$.

\subsection{Vinberg $\theta$-representations.} \label{section:vinberg}

For this section, we refer to \cite{kac1, kac, vinberg, vinberg2} for reference.

Let $X_n$ be a Dynkin diagram and let $\fg$ be the corresponding simple Lie algebra. Let us distinguish a node $x\in X_n$. Let $\alpha_k$ be a corresponding simple root in the root system $\Phi$ corresponding to $X_n$. The choice of $\alpha_k$ determines a $\bZ$-grading on $\Phi$ by letting the degree of a root $\beta$ be equal to the coefficient of $\alpha_k$ when we write $\beta$ as a linear combination of simple roots. On the level 
of Lie algebras this corresponds to a $\bZ$-grading
\[
{\fg}=\bigoplus_{i\in \bZ}\ {\fg}_i .
\]

We define the group $G_0 := (G ,G )\times {\bC}^*$ where $(G,G)$ is a 
connected semisimple group with the Dynkin diagram $X_n\setminus x$. 
A representation of type I is the representation of $G_0$ on ${\fg}_1$, and a representation of type II is what we get when we replace $X_n$ with an affine Dynkin diagram. This notation follows \cite[Proposition 3.1]{kac}.

\begin{remark}
In the case of a type II representation (i.e., when $X_n$ is an affine Dynkin diagram), each $\theta$-representation $(G_0,\fg_1)$ has a Chevalley isomorphism: there is a subspace $\fh \subset \fg_1$ and a complex reflection group $W \subset \GL(\fh)$ (defined as the normalizer of $\fh$ modulo the centralizer of $\fh$), which we call the {\bf graded Weyl group}, such that the restriction map is an isomorphism
\[
\Sym(\fg_1^*)^{(G,G)} \xrightarrow{\cong} \Sym(\fh^*)^W.
\]
In particular, $\Sym(\fg_1^*)^{(G,G)}$ is always a polynomial ring. The complex reflection groups were classified by Shephard--Todd in \cite{shephard} and we will refer to their numbering system when using names for these groups.
\end{remark}

In \cite{vinberg}, Vinberg gave a description of the $G_0$-orbits in the representations of type I in terms of conjugacy classes of nilpotent elements in $\fg$. Let $e\in {\fg}_1$
 be a nilpotent element in ${\fg}$. Consider the irreducible components 
of the intersection of the conjugacy class of $e$ in $\fg$
\[
C(e)\cap {\fg}_1 = C_1 (e)\cup\cdots\cup C_{n(e)}(e) .
\]
The sets $C_i (e)$ are clearly $G_0$-stable.  Vinberg's result 
shows that these are precisely the $G_0$-orbits in ${\fg}_1$.

\begin{theorem}[Vinberg] \label{thm:vinbergclassification}
The $G_0$-orbits of the action of $G_0$ on ${\fg}_1$ are the components $C_i (e)$, for all choices of the conjugacy classes $C(e)$ and all $i$, $1\le i\le n(e)$.
\end{theorem}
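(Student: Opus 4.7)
My plan is to recast the theorem as a tangent-space estimate: I want to show that every $G_0$-orbit sitting inside $C(e) \cap \fg_1$ is automatically open in the irreducible component of $C(e) \cap \fg_1$ that contains it, and then invoke irreducibility of each $C_i(e)$ to conclude it must consist of a single orbit. Before that, I would confirm that every $x \in \fg_1$ really is nilpotent as an element of $\fg$: since $\ad x$ shifts the grading by $+1$, some power of $\ad x$ vanishes on the finite-dimensional $\bZ$-graded algebra $\fg$, and for a semisimple Lie algebra $\ad$-nilpotent is equivalent to nilpotent, so $x$ belongs to some nilpotent $G$-conjugacy class $C(x)$, with $x \in C(x) \cap \fg_1$.

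The heart of the argument is a tangent-space computation. Since $C(e) = G/Z_G(e)$ is a single $G$-orbit and hence a homogeneous space, $T_y C(e) = [\fg, y]$ for every $y \in C(e)$. Viewing $C(e) \cap \fg_1$ as the scheme-theoretic preimage of $0$ under $C(e) \hookrightarrow \fg \twoheadrightarrow \fg/\fg_1$, the Zariski tangent space at $y$ is contained in $[\fg,y] \cap \fg_1$. Using the grading decomposition $[\fg,y] = \bigoplus_i [\fg_i, y]$ together with $[\fg_i, y] \subseteq \fg_{i+1}$, the intersection collapses to
\[
T_y(C(e) \cap \fg_1) \ \subseteq\ [\fg_0, y],
\]
which is exactly the tangent space at $y$ to the orbit $G_0 \cdot y$.

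Now fix $y \in C(e) \cap \fg_1$ and let $C_i(e)$ be any irreducible component of $C(e) \cap \fg_1$ through $y$. The irreducible set $G_0 \cdot y$ contains $y$ and lies in $C(e) \cap \fg_1$, so it sits inside $C_i(e)$, and the chain of inequalities
\[
\dim C_i(e) \ \leq\ \dim T_y C_i(e) \ \leq\ \dim [\fg_0, y] \ =\ \dim G_0 \cdot y \ \leq\ \dim C_i(e)
\]
forces $G_0 \cdot y$ to be open in $C_i(e)$. The $G_0$-orbits contained in $C_i(e)$ then partition it into pairwise disjoint open subsets; since $C_i(e)$ is irreducible, there can be only one such orbit, so $C_i(e) = G_0 \cdot y$. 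In the opposite direction, any $G_0$-orbit $G_0 \cdot x \subseteq \fg_1$ is irreducible and lies in $C(x) \cap \fg_1$, hence inside some component $C_i(x)$, which then equals $G_0 \cdot x$ by what was just proved.

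I expect the main obstacle to be the tangent-space identification itself: one has to pin down a scheme-theoretic meaning for $C(e) \cap \fg_1$ and keep careful track of the possibly non-reduced scheme structure versus the reduced structure on the components $C_i(e)$, so that the inequality $\dim T_y C_i(e) \leq \dim[\fg_0,y]$ is really available. Once this identification is in hand the rest of the argument is formal; in particular, no use of graded Jacobson--Morozov or Kostant's section is needed for this qualitative classification, though those tools become essential if one wants to enumerate the $C_i(e)$ explicitly.
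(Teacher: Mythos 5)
The paper does not actually prove this theorem; it is stated as a citation to Vinberg's 1976 article, so there is no in-text argument to compare yours against. Your tangent-space dimension count is, however, precisely the mechanism Vinberg uses, and the skeleton of your argument is sound: nilpotence of every $x \in \fg_1$ via the grading shift, $T_y C(e) = [\fg, y]$ for adjoint orbits, the collapse $[\fg,y] \cap \fg_1 = [\fg_0,y]$ from $[\fg_i,y] \subseteq \fg_{i+1}$, and the tangent-space inclusion $T_y C_i(e) \subseteq T_y(C(e) \cap \fg_1)$ coming from the closed immersion of the reduced component into the (possibly non-reduced) scheme-theoretic intersection are all correct.

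There is one genuine gap, and it sits exactly in the sentence ``The irreducible set $G_0 \cdot y$ contains $y$ and lies in $C(e) \cap \fg_1$, so it sits inside $C_i(e)$.'' If you let $C_i(e)$ be \emph{any} component through $y$, this does not follow: an irreducible subset of $C(e) \cap \fg_1$ lies in \emph{some} component, not in every component passing through $y$. Likewise, the clinching sentence ``The $G_0$-orbits contained in $C_i(e)$ then partition it into pairwise disjoint open subsets'' silently assumes that every $z \in C_i(e)$ has its whole $G_0$-orbit inside $C_i(e)$. Both claims require the $G_0$-stability of each $C_i(e)$, which the paper flags as ``clear''; it does follow easily because $G_0 = (G,G) \times \bC^*$ is \emph{connected}, hence cannot permute the finitely many components of $C(e) \cap \fg_1$ nontrivially, but you never invoke this. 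Once you insert that observation, the chain $\dim C_i(e) \leq \dim T_z C_i(e) \leq \dim[\fg_0,z] = \dim G_0 \cdot z \leq \dim C_i(e)$ becomes available for every $z \in C_i(e)$, each orbit in $C_i(e)$ is open, irreducibility forces a single orbit, and the converse direction goes through as you wrote it. Your closing worry about non-reduced structures is well-placed but does not cause trouble: the Zariski tangent space of a closed subscheme injects into that of the ambient scheme, reduced or not, so $\dim T_y C_i(e) \leq \dim[\fg_0,y]$ is indeed available.
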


Theorem~\ref{thm:vinbergclassification} makes a connection between the orbits in ${\fg}_1$ and the nilpotent orbits in $\fg$.
The classification of nilpotent orbits in simple Lie algebras was obtained by Bala and Carter in the papers \cite{balacarter}. A good account of this theory is the book \cite{colmacg}. Here we recall that the nilpotent orbit of an element $e$ in a simple Lie algebra $\fg$ is characterized by the smallest Levi subalgebra $\mathfrak l$ containing $e$. One must be careful because sometimes $\mathfrak l$ is equal to $\fg$. If the element $e$ is a principal element in $\mathfrak l$, then this orbit is denoted by the Dynkin diagram of $\mathfrak l$ (but there might be different ways in which the root system $R({\mathfrak l })$ sits as a subroot system of $R({\fg})$). 

There are, however, the non-principal nilpotent orbits that are not contained in a smaller reductive Lie algebra $\mathfrak l$. These are called {\it the distinguished nilpotent orbits} and are described in \cite[\S\S 8.2--8.4]{colmacg}. They are characterized by their associated parabolic subgroups (as their Dynkin characteristics are even, \cite[\S 8]{colmacg}).
Let us remark that for Lie algebras of classical types, for type ${\rm A}_n$ the only distinguished nilpotent orbits are the principal ones, and for types ${\rm B}_n$, ${\rm C}_n$, ${\rm D}_n$ these are orbits corresponding to the partitions with different parts. For exceptional Lie algebras the distinguished orbits can be read off the tables in \cite[\S 8.4]{colmacg}.

Theorem~\ref{thm:vinbergclassification} is not easy to use because it is not very explicit. In the next section we describe a more precise method from another of Vinberg's papers \cite{vinberg2}.

\subsection{The Vinberg method for classifying orbits.}

In this section we describe the second paper of Vinberg \cite{vinberg2} in which he describes orbits of nilpotent elements in ${\fg}_1$.
Similar to the Bala--Carter classification, the nilpotent elements in ${\fg}_1$ are described by means of some graded subalgebras of $\fg$. We need some preliminary notions.

All Lie algebras $\fg$ we will consider will be Lie algebras of some algebraic group $G$.

Let $(X_n ,\alpha_k )$ be one of the representations from the previous section. As before, it defines the grading
\[
{\fg}=\bigoplus_{i\in{\bZ}} {\fg}_i
\]
where ${\fg}_i$ is the span of the roots which, written as a combination of simple roots, have $\alpha_k$ with coefficient $i$. The component ${\fg}_0$ contains a Cartan subalgebra. $G_0$ denotes the connected closed subgroup of $G$ corresponding to ${\fg}_0$.

In the sequel, $Z(x)$ denotes the centralizer of an element $x\in G$ and $Z_0(x) = Z(x)\cap G_0$, and $\fz$, ${\mathfrak z}_0$ denote the corresponding Lie algebras. Similarly, $N(x)$ denotes the normalizer of an element $x\in G$ and $N_0(x) = N(x)\cap G_0$.

We let $R({\fg})$ denote the set of roots of $\fg$, and $\Pi({\fg})$ denotes a set of simple roots.

\begin{definition} A graded Lie subalgebra $\fs \subset \fg$ is {\bf regular} if it is normalized by a maximal torus in ${\fg}_0$. A reductive graded Lie algebra $\fs \subset \fg$ is {\bf complete} if it is not a proper graded Lie subalgebra of any regular reductive $\bZ$-graded Lie algebra of the same rank.
\end{definition}

\begin{definition} A $\bZ$-graded Lie algebra $\fg$ is {\bf locally flat} if any of the following equivalent conditions is satisfied, for $e$ a point in general position in ${\fg}_1$:
\begin{compactenum}[(1)]
\item the subgroup $Z_0(e)$ is finite,
\item $\fz_0(e)=0$,
\item $\dim {\fg}_0 = \dim {\fg}_1$. \qedhere
\end{compactenum}
\end{definition}

Fix a nonzero nilpotent element $e\in {\fg}_a$ and choose some maximal torus $H$ in $N_0(e)$. Its Lie algebra $\mathfrak h$ is {\bf the accompanying torus} of the element $e$. We denote by $\phi$ the character of the torus $H$ defined by the condition
\[
[u,e]=\phi(u)e
\]
for $u\in{\mathfrak h}$. Consider the graded Lie subalgebra ${\fg}({\mathfrak h}, \phi )$ of $\fg$ defined by
\[
{\fg}({\fh}, \phi ) =\bigoplus_{i\in\bZ} {\fg}({\mathfrak h}, \phi )_i
\]
where
\[
{\fg}({\mathfrak h}, \phi )_i=\lbrace x\in {\fg}_{ia} \mid [u,x]=i\phi(u)x \text{ for all } u\in H \rbrace .
\]

\begin{definition} The {\bf support} $\mathfrak s$ of the nilpotent element $e\in {\fg}_a$ is the commutator subalgebra of ${\fg}({\mathfrak h}, \phi )$ considered as a $\bZ$-graded Lie algebra.
\end{definition}

Clearly $e\in {\mathfrak s}_1$. We are ready to state the main theorem of \cite {vinberg2}. 

\begin{theorem}[Vinberg] \label{thm:2vinberg} The supports of nilpotent elements of the space ${\fg}_i$ are exactly the complete regular locally flat semisimple $\bZ$-graded subalgebras of the algebra $\fg$.
The nilpotent element $e$ can be recovered from the support subalgebra $\mathfrak s$ as the generic element in ${\mathfrak s}_1$.
\end{theorem}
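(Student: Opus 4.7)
The plan is to prove the forward direction (every support has the listed properties) and the converse (every such subalgebra arises as a support) separately, both times exploiting the maximality of the torus $H$ inside $N_0(e)$; the recovery statement falls out of the converse. Throughout, $\fg(\fh,\phi)$ is a direct sum of weight spaces for the adjoint action of $H$ on the reductive algebra $\fg$, hence reductive, so its commutator subalgebra $\mathfrak{s}$ is semisimple and inherits the $\bZ$-grading from $\fg$ with $\mathfrak{s}_j \subset \fg_{ja}$; in particular $e \in \mathfrak{s}_1$.

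For the forward direction, regularity follows by extending $\fh$ to a maximal torus of $\fg_0$ inside $Z_{\fg_0}(\fh)$, which normalizes every $H$-weight space of $\fg$ and therefore normalizes $\mathfrak{s}$. For local flatness, a Jordan-decomposition argument shows that any nonzero element of $\fz_{\mathfrak{s}_0}(e)$ would produce a semisimple element in $Z_{\fg_0}(\fh) \cap N_0(e)$ linearly independent of $\fh$ (using $\mathfrak{s}_0 \subseteq Z_{\fg_0}(\fh)$), contradicting the maximality of $H$; upper semicontinuity of stabilizer dimension then promotes $\fz_{\mathfrak{s}_0}(e) = 0$ to the generic local flatness condition. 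Completeness works by the same mechanism: any regular reductive $\bZ$-graded overalgebra $\mathfrak{s}' \supsetneq \mathfrak{s}$ of the same rank would contain a central semisimple element outside $\mathfrak{s}$ which commutes with $e$, again enlarging $H$ inside $N_0(e)$.

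For the converse, let $\mathfrak{s}$ be complete regular locally flat semisimple graded, and pick a generic $e \in \mathfrak{s}_1$. Elements of $\mathfrak{s}_1$ are $\ad_\mathfrak{s}$-nilpotent by grading, hence nilpotent in $\fg$. Choose $\xi \in \mathfrak{s}_0$ realizing the grading of $\mathfrak{s}$ as $\ad(\xi)$, and take $\fh' = \bC \xi \oplus \ft$ where $\ft$ is a maximal toral subalgebra of $\fz_{\fg_0}(\mathfrak{s})$. The character $\phi'$ given by $\phi'(\xi) = 1$ and $\phi'|_\ft = 0$ satisfies $[u, e] = \phi'(u) e$ for all $u \in \fh'$, so $\mathfrak{s} \subseteq \fg(\fh', \phi')$ and therefore $\mathfrak{s} \subseteq [\fg(\fh',\phi'), \fg(\fh',\phi')]$. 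Local flatness of $\mathfrak{s}$ combined with maximality of $\ft$ in $\fz_{\fg_0}(\mathfrak{s})$ ensures that $\fh'$ is maximal among tori in $N_0(e)$, so $\fh'$ is an accompanying torus of $e$. The resulting support of $e$ is then a regular reductive $\bZ$-graded subalgebra containing $\mathfrak{s}$ of the same rank, so completeness of $\mathfrak{s}$ forces equality — yielding both the converse and the recovery statement.

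The principal obstacle I anticipate is the precise handling of completeness. In the forward direction, extracting from a hypothetical overalgebra $\mathfrak{s}'$ of the same rank a genuinely new semisimple element in $N_0(e)$ requires isolating an extra toral direction outside $\fh$ via a Levi-type decomposition of $\mathfrak{s}'$. In the converse, the harder piece is to verify that the candidate accompanying torus $\fh'$ is truly maximal in $N_0(e)$ — a priori some extra toral direction might centralize $e$ without centralizing all of $\mathfrak{s}$ — and this maximality appears to rely on the locally-flat dimension equality $\dim \mathfrak{s}_0 = \dim \mathfrak{s}_1$, which guarantees that the $S_0$-orbit through $e$ is open in $\mathfrak{s}_1$ and hence pins down the structure of its stabilizer.
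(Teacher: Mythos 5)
The paper does not prove this theorem. It is stated verbatim as a result of Vinberg (labelled ``Theorem (Vinberg)'' and introduced by ``We are ready to state the main theorem of \cite{vinberg2}''), and the authors simply cite Vinberg's paper \emph{Classification of homogeneous nilpotent elements of a semisimple graded Lie algebra}. There is therefore no proof in this paper against which to compare your attempt; the expository standard here is to quote, not reprove.

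That said, your sketch does follow the broad outline of Vinberg's argument (adjoint weight-space decomposition for the accompanying torus, passage to the derived subalgebra, maximality of $H$ forcing local flatness and completeness, and the converse by building an accompanying torus from a grading element and $\fz_{\fg_0}(\fs)$). Two steps need more care than you give them. First, your local-flatness step concludes from a nonzero element of $\fz_{\fs_0}(e)$ the existence of a \emph{semisimple} element of $N_0(e)$ outside $\fh$; but the Jordan decomposition only delivers this when the semisimple part $x_s$ is nonzero, and a priori the centralizer could be entirely nilpotent, so you need an additional argument (e.g.\ an $\mathfrak{sl}_2$-triple/Jacobson--Morozov input, or the reductivity of the stabilizer of $e$) to rule out a purely nilpotent $\fz_{\fs_0}(e)$. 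Second, in the converse you assert that $\fh' = \bC\xi \oplus \ft$ is maximal among tori in $N_0(e)$, and you correctly flag this as the crux, but ``local flatness plus maximality of $\ft$ in $\fz_{\fg_0}(\fs)$'' does not by itself exclude a toral direction that normalizes $\bC e$ without normalizing $\fs$; this is exactly where Vinberg's proof does real work. As a reconstruction of a cited theorem the proposal is a reasonable roadmap, but these two gaps would have to be closed before it could stand as a proof.
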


It follows from the theorem that the nilpotent element $e$ is defined uniquely (up to conjugation by an element of $G_0$) by its support. This means it is enough to classify the regular semisimple $\bZ$-graded subalgebras $\mathfrak s$ of $\fg$.
 
Let us choose a maximal torus $\ft \subset {\fg}_0$. The $\bZ$-graded subalgebra $\mathfrak s$ is {\bf standard} if it is normalized by $\mathfrak t$, i.e., if for all $i\in\bZ$ we have
\[
[{\mathfrak t}, {\mathfrak s}_i]\subset {\mathfrak s}_i .
\]

Vinberg also proves that every $\bZ$-graded subalgebra $\mathfrak s$ is conjugated to a standard subalgebra by an element of $G_0$. Moreover, he shows that if two standard $\bZ$-graded subalgebras are conjugated by an element of $G_0$, then they are conjugated by an element of $N_0 ({\mathfrak t})$. This gives combinatorial method for classifying regular semisimple $\bZ$-graded subalgebras of $\fg$.

Let $\fs$ be a standard semisimple $\bZ$-graded subalgebra of $\fg$. The grading on the subalgebra $\mathfrak s$ defines a degree map $\deg \colon R({\mathfrak s)}\rightarrow\bZ$.
For a standard $\bZ$-graded subalgebra $\mathfrak s$ we also get the map 
\[
f \colon R({\mathfrak s)}\rightarrow R({\fg}).
\]

The map $f$ has to be {\it additive}, i.e., it satisfies
\begin{align*}
f(\alpha +\beta ) &= f(\alpha )+f(\beta ) \quad \forall\alpha ,\beta\in R({\mathfrak s}),\\*
f(-\alpha  ) &= -f(\alpha ) \quad \forall\alpha \in R({\mathfrak s}).
\end{align*}
Moreover we have

\begin{proposition} \label{condmapf} The map $f$ satisfies the following properties:
\begin{compactenum}[\rm (a)]
\item
\[
{{(f(\alpha ),f(\beta ))}\over{(f(\alpha ),f(\alpha ))}}= {{(\alpha ,\beta )}\over{(\alpha ,\alpha )}}\quad \forall\alpha, \beta\in R({\mathfrak s}).
\]
\item $f(\alpha )-f(\beta )\notin R({\fg})\quad \forall\alpha ,\beta\in \Pi({\mathfrak s})$,
\item $\deg f(\alpha )=\deg \alpha,\quad \forall\alpha\in \Pi({\mathfrak s})$.
\end{compactenum}
Conversely, every map satisfying these conditions defines a standard regular $\bZ$-graded subalgebra ${\mathfrak s}$ of ${\fg}$.
\end{proposition}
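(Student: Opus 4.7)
The map $f$ arises as follows: since $\fs$ is standard, $\ft$ normalizes each $\fs_i$, so I can choose a Cartan $\fh_\fs \subset \fs_0$ contained in $\ft$. Each one-dimensional root space $\fs_\alpha$ is then a $\ft$-weight space inside $\fg$, and its $\ft$-weight must be a root of $\fg$, which defines $f(\alpha)$. Additivity of $f$ follows from $[\fs_\alpha, \fs_\beta] \subset \fs_{\alpha+\beta}$ and the additivity of $\ft$-weights under brackets. Condition (c) is then immediate: if $\fs_\alpha \subset \fs_d \subset \fg_d$, then $\deg f(\alpha) = d = \deg \alpha$, since the grading on $\fg$ is read off from the coefficient of $\alpha_k$ in a root.

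For (a), the Killing form of $\fg$ restricts to a nondegenerate invariant form on the semisimple algebra $\fs$; on each simple ideal of $\fs$ this restriction must be a positive scalar multiple of that ideal's own Killing form, and distinct simple ideals are Killing-orthogonal. The ratio $(\alpha,\beta)/(\alpha,\alpha)$ is invariant under rescaling the inner product on each simple component, so the identity of (a) holds (with both sides vanishing when $\alpha, \beta$ lie in different ideals). For (b), take distinct $\alpha, \beta \in \Pi(\fs)$; then $\alpha - \beta$ is not a root of $\fs$, so for root vectors $e_\alpha \in \fs_\alpha$ and $e_{-\beta} \in \fs_{-\beta}$ we have $[e_\alpha, e_{-\beta}] = 0$ in $\fs$, hence also in $\fg$. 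But $e_\alpha$ and $e_{-\beta}$ are nonzero scalar multiples of $\fg$-root vectors for $f(\alpha)$ and $-f(\beta)$, and in a semisimple Lie algebra the bracket of two such vectors is nonzero whenever the sum of their roots is a root. Thus $f(\alpha) - f(\beta) \notin R(\fg)$.

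For the converse, given $f$ satisfying (a)--(c), for each $\alpha_i \in \Pi(\fs)$ pick root vectors $E_i \in \fg_{f(\alpha_i)}$ and $F_i \in \fg_{-f(\alpha_i)}$ and set $H_i = [E_i, F_i] \in \ft$. Property (a) guarantees that the matrix $\bigl(\alpha_j(H_i)\bigr)$ reproduces the Cartan matrix of $\fs$ after an appropriate rescaling of the $E_i$ and $F_i$; property (b) yields $[E_i, F_j] = 0$ for $i \neq j$. Once the cubic Serre relations $(\ad E_i)^{1 - A_{ij}} E_j = 0$ are verified, Serre's presentation theorem produces a Lie algebra homomorphism from the semisimple algebra with Cartan matrix $A$ into $\fg$; its image is the desired subalgebra, which is visibly normalized by $\ft$ and, by (c), is $\bZ$-graded so that its degree map agrees with $\deg \circ f$.

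The main obstacle is verifying the cubic Serre relations in the converse. Property (a) fixes the type of the rank-two subsystem of $R(\fg)$ spanned by $f(\alpha_i)$ and $f(\alpha_j)$, but one must still rule out extraneous roots in the $f(\alpha_j)$-string through $f(\alpha_i)$ that would violate Serre vanishing. The natural approach is to show that $(\ad E_i)^{1 - A_{ij}} E_j$ is killed by every $\ad F_k$ using (b), conclude it is a highest-weight vector of a weight lying outside $R(\fg)$ by the rank-two classification supplied by (a), and therefore must be zero. Injectivity of the resulting homomorphism into $\fg$ then follows from the semisimplicity of the source together with the linear independence of the $H_i$ inside $\ft$, which is forced by the nonsingularity of the Cartan matrix on each simple component.
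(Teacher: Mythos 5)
The paper does not prove this proposition; it is quoted from Vinberg's second paper [Vin2], so there is no in-text argument to compare against. Judged on its own, your argument is essentially correct and follows the standard Serre-relations blueprint. Two remarks on where it can be tightened. For (a), there is no need to invoke the rescaling of the Killing form on each simple ideal: since $[\fs_\alpha,\fs_{-\alpha}]$ and $[\fg_{f(\alpha)},\fg_{-f(\alpha)}]$ are both one-dimensional and the former is contained in the latter, they coincide, so the coroots agree, $h_\alpha = h_{f(\alpha)}$ (each is the unique element of that line on which $\alpha$, resp.\ $f(\alpha)$, evaluates to $2$, and $f(\alpha)|_{\fh_\fs} = \alpha$). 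Then $(\alpha,\beta)/(\alpha,\alpha) = \tfrac12\,\beta(h_\alpha) = \tfrac12\,f(\beta)\bigl(h_{f(\alpha)}\bigr) = (f(\alpha),f(\beta))/(f(\alpha),f(\alpha))$, which also disposes of the case of roots in different simple ideals without any separate orthogonality claim. For the converse, the highest-weight-vector detour in your last paragraph is unnecessary: by (a) the Cartan integer $\langle f(\alpha_j), f(\alpha_i)^\vee\rangle$ equals $A_{ij}$, so the $f(\alpha_i)$-string through $f(\alpha_j)$ satisfies $p - q = A_{ij}$; property (b) forces $p=0$, whence $q=-A_{ij}$, so $f(\alpha_j)+(1-A_{ij})f(\alpha_i)$ is not a root of $\fg$, and $(\ad E_i)^{1-A_{ij}}E_j$ vanishes simply because its weight space is zero. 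The remaining pieces — additivity, (b), (c), injectivity via linear independence of the $H_i$, and regularity because the whole construction is $\ft$-equivariant — are handled correctly.
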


\begin{remark} The subalgebra $\mathfrak s$ corresponding to the map $f$ is complete if and only if there exists an element $w$ in the Weyl group $W$  of $\fg$
such that $wf(\Pi({\mathfrak s}))\subset\Pi({\fg})$ (see \cite[p.25]{vinberg2}).
\end{remark}

Theorem \ref{thm:2vinberg} means that in order to classify the nilpotent elements $e\in {\fg}_1$ we need to classify the possible maps $f$
corresponding to its support, i.e., the corresponding complete regular $\bZ$-graded subalgebra $\mathfrak s$.
Since we are interested in the nilpotent elements $e\in {\fg}_1$, we need to classify the maps $f$ for which $\deg( f(\alpha ))\in\lbrace 0,1\rbrace$ for every $\alpha\in \Pi({\mathfrak s})$.

\subsection{Example: $\bigwedge^3 \bC^7$.}

Let $X_n={\rm E}_7$, and $\alpha_k=\alpha_2$ in Bourbaki numbering.

The graded Lie algebra corresponding to the resulting grading is 
\[
{\fg}(E_7)= {\fg}_{-2}\oplus {\fg}_{-1}\oplus {\fg}_0\oplus {\fg}_1\oplus {\fg}_2
\]
with $G_0= \GL_7(\bC)$, ${\fg}_0= \mathfrak{gl}_7(\bC)$, 
${\fg}_1=\bigwedge^3\bC^7$, ${\fg}_2=\bigwedge^6\bC^7$.

We choose a basis $\lbrace e_1,\ldots, e_7\rbrace$ of $\bC^7$.
The weight vectors in $\fg_1=\bigwedge^3\bC^7$ are the vectors $e_i\wedge e_j\wedge e_k$ of weight 
$\eps_i +\eps_j+\eps_k$ for $1\le i<j<k\le 7$. 

There is a natural bijection $\Sigma$ of these weight vectors with the positive roots with label $1$ on the node $\alpha_2$. In order to describe it
we identify the subroot system of ${\fg}_0$ with the root system of type ${\rm A}_6$ with the simple roots $\beta_1 ,\ldots ,\beta_6$ (corresponding to 
permutations $(1,2), (2,3),\ldots ,(6,7)$ respectively) by identifying $\beta_1$ with $\alpha_1$ and $\beta_s$ with $\alpha_{s+1}$ for $2\le s\le 6$.
We denote this identification map by $\Lambda$. We set $\Sigma(e_5\wedge e_6\wedge e_7)=\alpha_2$ and extend it to a unique bijection satisfying $\Sigma(e_i\wedge e_j\wedge e_k+\beta_s)=\Sigma(e_i\wedge e_j\wedge e_k)+\Lambda(\beta_s)$ for every simple root $\beta_s$, for  $1\le s\le 6$. 

The invariant scalar product on ${\mathfrak h}$ restricted to the roots from ${\fg}_1$ and transferred by $\Sigma$ is
\[
(e_i\wedge e_j\wedge e_k, e_p\wedge e_q\wedge e_r)= \# (\lbrace i,j,k\rbrace\cap\lbrace p,q,r\rbrace)-1.
\]

This can be checked by observing that both the invariant scalar product and the one defined by the formula above are
invariant with respect to the Weyl group of ${\fg}_0$, i.e., the permutation group $S_7$, and checking the equality of one scalar product directly.

Next using the table from \cite[p.248]{spaltenstein} we see that the general element of $\bigwedge^3 \bC^7$ has the Bala--Carter label ${\rm A}_2+3{\rm A}_1$. Notice that the labeling of this orbit given on \cite[p.130]{colmacg} has weighted Dynkin diagram with labels $0$ on all nodes except for $\alpha_k$ and label $2$ on the node $\alpha_k$ (this also happens in the other cases except the case $({\rm E}_6,\alpha_3)$ where the weighted Dynkin diagram of the nilpotent orbit intersecting ${\fg}_1$ in the open orbit has labelings $1$ at $\alpha_3$ and $\alpha_5$ and zero elsewhere). The support subalgebras of the smaller orbits in ${\fg}_1$ have Vinberg labels exactly matching the Bala--Carter labels that can be read off the Spaltenstein tables. In our analysis we checked also that the other types of support algebras given in \cite{vinberg2} do not exist.

The combinatorial analysis is not difficult. We just give a few examples. Let us identify the support subalgebra of type ${\rm A}_2+3{\rm A}_1$.
According to Proposition~\ref{condmapf} we need to find all choices (up to the action of $S_7$) of five weight vectors in $\bigwedge^3\bC^7$ whose pairwise  scalar products match those of the five simple roots of the root system ${\rm A}_2+3{\rm A}_1$. It is clear that for simple roots of the system ${\rm A}_2$ we can choose $e_1\wedge e_2\wedge e_3$ and $e_4\wedge e_5\wedge e_6$. Then we need a choice of three subsets $[i,j,k]$  which have one element intersections
with both $[1,2,3], [4,5,6]$ and with each other. Up to the permutation group $S_7$ it is clear that there is only one choice $[1,4,7], [2,5,7], [3,6,7]$.
One can check that the resulting orbit is open by calculating its dimension. This is an easy linear algebra exercise since the tangent space is obtained by hitting this element with all vectors in $\fg\fl_7(\bC)$.

Similarly, looking at the subsystems of type $3{\rm A}_1$ we need choices of three subsets $[i,j,k]$ such that every pair of subsets intersects in one element. This element can be common to all three subsets or not, which gives two possibilities $\{[1,2,3], [1,4,5], [1,6,7]\}$ and $\{[1,2,3], [1,4,5], [2,4,6]\}$ up to the $S_7$-action. 

Finally, looking at the possibilities for $4{\rm A}_1$ we notice that the triple $[1,2,3], [1,4,5], [1,6,7]$ can be only complemented in eight ways equivalent to $[2,4,6]$. The triple $[1,2,3], [1,4,5], [2,4,6]$ can be complemented in three ways equivalent to $[1,6,7]$. But these two choices of roots of $4{\rm A}_1$ are the same. We conclude that there is only one orbit of type $4{\rm A}_1$ in $\bigwedge^3 \bC^7$.

A subtle point is checking that the obtained support algebra is complete. In the case of $\bigwedge^3\bC^7$ it also follows from the fact that all the representatives give different orbits. We omit this.

The result of the analysis is presented in the following table (writing $[i,j,k]$ for $e_i\wedge e_j\wedge e_k$).

\[
\begin{array}{cccc} \text{Label} & \fs & \dim & \text{Representative}\\ 
0&0&0&0\\
1&{\rm A}_1&13&[123]\\
2&2{\rm A}_1&20&[123]+[145]\\
3&3{\rm A}_1&21&[123]+[145]+[167]\\
4&3{\rm A}_1&25&[123]+[145]+[246]\\
5&{\rm A}_2&26&[123]+[456]\\
6&4{\rm A}_1&28&[123]+[145]+[167]+[357]\\
7&{\rm A}_2+{\rm A}_1&31&[123]+[456]+[147]\\
8&{\rm A}_2+2{\rm A}_1&34&[123]+[456]+[147]+[257] \\
9&{\rm A}_2+3{\rm A}_1&35&[123]+[456]+[147]+[257]+[367]  
\end{array}
\]
The containment diagram is
\[
\xy
(15,0)*+{{\cO}_{0}}="o0";%
(15,8)*+{{\cO}_{1}}="o1";%
(15,16)*+{{\cO}_2}="o2";%
(8,24)*+{{\cO}_3}="o3";%
(22,32)*+{{\cO}_4}="o4";%
(22,40)*+{{\cO}_5}="o5";%
(8,48)*+{{\cO}_6}="o6";%
(15,56)*+{{\cO}_7}="o7";%
(15,64)*+{{\cO}_8}="o8";%
(15,72)*+{{\cO}_9}="o9";%
(-15,0)*{0};%
(-15,8)*{13};%
(-15,16)*{20};%
(-15,24)*{21};%
(-15,32)*{25};%
(-15,40)*{26};%
(-15,48)*{28};%
(-15,56)*{31};%
(-15,64)*{34};%
(-15,72)*{35};%
{\ar@{-} "o0"; "o1"};%
{\ar@{-} "o1"; "o2"};%
{\ar@{-} "o3"; "o2"};%
{\ar@{-} "o4"; "o2"};%
{\ar@{-} "o4"; "o5"};%
{\ar@{-} "o3"; "o6"};%
{\ar@{-} "o4"; "o6"};%
{\ar@{-} "o5"; "o7"};%
{\ar@{-} "o6"; "o7"};%
{\ar@{-} "o7"; "o8"};%
{\ar@{-} "o8"; "o9"};%
\endxy
\]
and it happens to be the same as the corresponding containment of the nilpotent orbits in ${\fg}_1$.
 
We can interpret the orbits in terms of skew-symmetric tensors as follows.
Let $A=\Sym(\bigwedge^3{\bC^7}^*)$ be the coordinate ring of our representation.
The orbit $\cO_1$ is the orbit of the highest weight vector, and its closure is the set of decomposable tensors, i.e., the tensors $t=v_1\wedge v_2\wedge v_3$ ($v_1, v_2, v_3\in\bC^7$).
There is an $\SL_7(\bC)$-invariant $\Delta$ of degree $7$, the hyperdiscriminant. This is the projective dual variety of the decomposable tensors in ${\bC^7}^*$. In fact, we can canonically identify the orbits in $\bC^7$ with those in its dual, so it makes sense to consider the projective dual $\ol{\cO}^\vee$ of an orbit closure $\ol{\cO}$ as a subset of $\bC^7$ itself. By the {\bf rank} of a tensor $t\in\bigwedge^3{\bC}^7$ we mean the subspace rank, i.e., the minimal number $s$ such that there
exists a subpace $V$ of dimension $s$ in ${\bC}^7$ such that $t\in\bigwedge^3 V\subset\bigwedge^3{\bC}^7$.
An orbit is {\bf degenerate} if it consists of tensors of rank $\le 6$. Such orbits correspond to $\GL_6(\bC)$-orbits in $\bigwedge^3{\bC}^6$.
By a $1$-decomposable tensor we mean a tensor $t=v\wedge q$ with $v\in{\bC}^7$, $q\in\bigwedge^2{\bC}^7$.
Finally in the projective picture we interpret orbit closures as secant and tangential varieties of the orbit $\cO_1$. We denote 
by $\sigma_k({\cO}_1)$ the $k$th secant variety of $\cO_1$ and by $\tau({\cO}_1)$ the tangential variety of $\cO_1$.

With this terminology, we have the following table describing the orbits.
  
~

\begin{tabular}{lll}\text{Number} & \text{Projective picture} & \text{Tensor picture}\\ 
0&0&0\\
1& cone($\Gr(3,7)$)& decomposable tensors\\
2&&tensors of rank $\le 5$\\
3& $\sigma_2(\ol{\cO}_1)^\vee$ &1-decomposable\ tensors\\
4&$\tau(\overline\cO_1)$&degenerate and zero  hyperdiscriminant for $\bigwedge^3 \bC^6$\\
5&$\sigma_2 (\overline\cO_1)$&tensors\ of\ rank $\le 6$\\
6&$J(\overline\cO_1 ,\tau(\overline\cO_1))$&polarizations of hyperdiscriminant for $\bigwedge^3 \bC^6$\\
7&$\sigma_3 (\overline\cO_1)$& singular locus of $\ol{\cO}_1$ \\
8& $\ol{\cO}_1^\vee$ &hyperdiscriminant is zero \\
9&&generic  
\end{tabular}

\begin{remark}
The geometric description of these orbits is also considered in \cite[\S 5]{holweck} (he also considers $\bigwedge^3 \bC^6$ and $\bigwedge^3 \bC^8$). The classification of orbits of $\bigwedge^3 K^7$ is studied for many kinds of fields $K$ (including algebraically closed fields of positive characteristic and finite fields) in \cite{ch}. In particular, the classification of orbits is independent of characteristic if the field is algebraically closed.
\end{remark}

We will describe in detail the non-degenerate orbit closures in $\bigwedge^3\bC^7$.
These are the orbits $\overline{\cO_9}$, $\overline{\cO_8}$. $\overline{\cO_7}$, $\overline{\cO_6}$ and $\overline{\cO_3}$.
The first of these is generic so there is not much to say. We also describe the generic degenerate orbit of tensors of rank $\le 6$.

We use the usual notation. Let $A=\Sym(\bigwedge^3{\bC^7}^*)$ and $\lambda$ is notation for $\bS_\lambda({\bC^7}^*)$. Also, let $x_1, \dots, x_7$ be a basis of ${\bC^7}^*$ dual to the basis $e_1, \dots, e_7$ of $\bC^7$. We will also describe vector bundle desingularizations for these orbit closures. The bundles $\eta$ and $\xi$ correspond to $\cS^*$ and $\cT^*$ in the notation of \S\ref{sec:kempfcollapsing}.

\begin{itemize}[$\bullet$]

\item  The hyperdiscriminant orbit $\cO_{8}$. 

This is the hypersurface given by the tensors with vanishing hyperdiscriminant $\Delta$. The orbit closure ${\overline\cO}_8$ is characterized 
(set-theoretically) by the condition that the determinant of the multiplication map
\[
\bigwedge^5{\bC^7}^*\otimes A(-1)\rightarrow\bigwedge^2{\bC^7}^*\otimes A
\]
given by multiplication is zero. The determinant of this matrix is equal to $\Delta^3$.

\item  The codimension $4$ orbit $\cO_{7}$.

This orbit closure is the singular locus of the hyperdiscriminant orbit $\overline{\cO_8}$.

We can find a desingularization by a vector bundle over $G/P = \Fl(2,6;\bC^7)$. The bundle $\xi \subset \bigwedge^3 \ul{\bC^7}^*$ is induced from the largest $P$-submodule of $\bigwedge^3 {\bC^7}^*$ which does not contain $x_1 \wedge x_2 \wedge x_7$ and $x_2 \wedge x_5 \wedge x_7$. The bundle $\eta$ has rank $17$, so the dimension of the desingularization is $17+14=31$ as needed. One gets a very nice complex describing the resolution of $\bC[\ol{\cO}_7]$. The terms of the complex $\bF(7)_\bullet$ are as follows
\[
0\rightarrow (6,5^6)\rightarrow (5^2,4^5)\rightarrow (4,3^5,2)\rightarrow(3^4,2^3)\rightarrow (0^7).
\]
The orbit closure is normal and has rational singularities and the complex $\bF(7)_\bullet$ is pure.

\item The codimension $7$ orbit $\cO_{6}$. 

We can find a desingularization by a vector bundle over $G/P = \Fl(1,4;\bC^7)$. The bundle $\xi \subset \bigwedge^3\ul{\bC^7}^*$ is induced from the $P$-submodule of $\bigwedge^3 {\bC^7}^*$ which does not contain $x_1 \wedge x_4 \wedge x_7$ and $x_2 \wedge x_3 \wedge x_4$. The bundle $\eta$ has rank $13$, so the dimension of the desingularization is
$13+15=28$ as needed. The terms in the resulting complex $\bF(6)_\bullet$ are
\begin{align*}
0\rightarrow (7^6,6)\rightarrow (7,6^5,5)\rightarrow (6^2,5^4,4)
\rightarrow (5^3,4^3,3)\rightarrow (4^4,3^2,2)
\rightarrow (3^5,2,1)\rightarrow (2^6,0)\rightarrow (0^7).
\end{align*}
The orbit closure is normal, with rational singularities and the complex $\bF(6)_\bullet$ is pure.

\item The generic degenerate orbit closure $\overline{\cO}_5$ of tensors of rank $\le 6$ (codimension $9$).

This orbit closure has a desingularization $Z(5)$ that lives on the Grassmannian $\Gr(1,{\bC^7}^*)$. Denoting the tautological bundles 
 $\cR$, $\cQ$ ($\rank {\cR}=1$, $\rank \cQ=6$), we have
$\xi ={\cR}\otimes\bigwedge^2 {\cQ}$. It is normal and has rational singularities. Calculating the resolution is straightforward, as $\xi$ is irreducible.

\item The orbit $\cO_{3}$ of $1$-decomposable tensors (codimension $14$). 

This orbit closure is the set of tensors $t\in\bigwedge^3\bC^7$ that can be expressed as $t=\ell\wedge\overline t$ where $\ell \in \bC^7$, ${\overline t}\in\bigwedge^2 \bC^7$.
The desingularization $Z(3)$ lives on the Grassmannian $\Gr(6,{\bC^7}^*)$. Denoting the tautological bundles as $\cR$, $\cQ$ ($\rank {\cR}=6$, $\rank \cQ=1$), we have $\xi =\bigwedge^3{\cR}$. It is normal and has rational singularities.
Calculating the resolution is straightforward, as $\xi$ is irreducible. The defining ideal is generated by the representation $(2^3,1^3,0)$ in degree $3$.
\end{itemize}

\begin{remark} \label{rmk:kummerflag}
Since we will need it later on, we ask, for a fixed $v \in \bigwedge^3 \bC^7$, how many lines $\ell \subset \bC^7$ there are such that the image of $v$ in $\bigwedge^3 (\bC^7/\ell)$ is a pure tensor. This only depends on the orbit, so we can study specific representatives.

No such line exists for vectors in the generic orbit or codimension 1 orbit. A representative for the codimension 4 orbit is
\[
  e_1 \wedge e_2 \wedge e_3 + e_4 \wedge e_5 \wedge e_6 + e_1 \wedge
  e_4 \wedge e_7.
  \]
This is a pure tensor in $\bigwedge^3 (\bC^7/\ell)$ exactly for $\ell = \langle e_1 \rangle$ and $\ell = \langle e_4 \rangle$. A representative for the codimension 7 orbit is
  \[
  e_1 \wedge e_2 \wedge e_3 + e_1 \wedge e_4 \wedge e_5 + e_1 \wedge
  e_6 \wedge e_7 + e_3 \wedge e_5 \wedge e_7.
  \]
This is a pure tensor in $\bigwedge^3 (\bC^7/\ell)$ exactly for $\ell = \langle e_1 \rangle$.
\end{remark}

\section{Some geometry.} \label{section:curves}

\subsection{Abelian varieties.}

The following result is most likely well-known, but we could not find it in the literature. The main points of the proof were communicated to us by Damiano Testa.

\begin{theorem} \label{thm:cohomologyabelian} Let $X$ be a
  $g$-dimensional geometrically connected projective nonsingular
  variety over a field $K$ of characteristic $0$. If $\omega_X \cong
  \cO_X$ and $\dim \rH^1(X; \cO_X) = g$, then $X$ is a torsor over an
  Abelian variety (namely, its Albanese variety).
\end{theorem}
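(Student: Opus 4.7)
The plan is to show that the Albanese morphism $a \colon X \to A := \mathrm{Alb}^1(X)$ is an isomorphism, where $A$ is a torsor over the Abelian variety $\mathrm{Alb}^0(X)$. In characteristic zero, $\dim \mathrm{Alb}^0(X) = \dim \rH^1(X, \cO_X) = g$, and Hodge symmetry gives $\dim \rH^0(X, \Omega^1_X) = g$; thus $X$ and $A$ are smooth projective geometrically connected of dimension $g$, and by construction of the Albanese the pullback $a^* \colon \rH^0(A, \Omega^1_A) \to \rH^0(X, \Omega^1_X)$ is an isomorphism.

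The heart of the argument is to show that the natural map of rank-$g$ vector bundles $a^* \Omega^1_A \to \Omega^1_X$ is an isomorphism. Since $\Omega^1_A$ is trivialized by its global sections, $a^* \Omega^1_A \cong \cO_X^{\oplus g}$, and taking determinants yields a map $\cO_X = \det(a^* \Omega^1_A) \to \det \Omega^1_X = \omega_X \cong \cO_X$, i.e., multiplication by a constant $c = \omega_1 \wedge \cdots \wedge \omega_g \in \rH^0(X, \omega_X) \cong K$, for any basis $\omega_1, \ldots, \omega_g$ of $\rH^0(X, \Omega^1_X)$. The map is an isomorphism iff $c \neq 0$, equivalently iff $a$ is surjective (since the pullback of a top form via a non-dominant morphism vanishes identically).

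The nonvanishing of $c$ is the main obstacle; it is where the characteristic zero hypothesis is essential (e.g., K3 surfaces, which have $\omega \cong \cO$ but $h^{1,0} = 0 < \dim$, are excluded). I would attack it via Hodge theory after base change to $\bC$: under the inclusion $\rH^0(\Omega^1_X) \hookrightarrow \rH^1(X, \bC)$, the constant $c$ corresponds to the cup product $[\omega_1] \cup \cdots \cup [\omega_g] \in \rH^{g,0}(X)$, and combining the Hodge decomposition $\rH^1(X,\bC) = \rH^{1,0} \oplus \rH^{0,1}$ (both $g$-dimensional) with Serre duality and $\omega_X \cong \cO_X$, the nonvanishing of $c$ is equivalent to nondegeneracy of the top cup-product map $\bigwedge^{2g} \rH^1(X, \bC) \to \rH^{2g}(X, \bC) \cong \bC$. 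A clean way to secure this is via the Beauville--Bogomolov--Yau decomposition, which realizes a finite \'etale cover of $X$ as a product of an Abelian variety with strict Calabi--Yau and hyperk\"ahler factors; since the latter two types of factor have $h^{1,0} = 0$, the hypothesis $h^{1,0}(X) = \dim X$ forces them to be trivial, already presenting $X$ as \'etale-covered by an Abelian variety of dimension $g$, which in turn forces $c \neq 0$.

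With $a^* \Omega^1_A \to \Omega^1_X$ an isomorphism, $a$ is \'etale, and being proper is finite \'etale. A finite \'etale cover of an Abelian torsor is again an Abelian torsor (over $\bar K$ with a choice of base point, finite \'etale covers of Abelian varieties correspond to open subgroups of the pro-finite abelian \'etale fundamental group $\hat{\bZ}^{2g}$, and so are again Abelian varieties). Hence $X$ is an Abelian torsor; applying the universal property of the Albanese to the morphism $\mathrm{id}_X \colon X \to X$ (now a morphism to an Abelian torsor) produces a section $s \colon A \to X$ of $a$, and since $X$ is connected and $a$ is finite \'etale, this section forces $a$ to be an isomorphism.
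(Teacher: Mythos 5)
Your proposal is correct, but it reaches the key nonvanishing by a genuinely different route than the paper. The paper invokes Kawamata's characterization of abelian varieties (\cite[Corollary 2]{kawamata}), which directly produces a \emph{birational} morphism $f\colon X_{\ol{K}}\to\mathrm{Alb}(X_{\ol{K}})$; the paper then runs the same determinant-of-$\rd f$ argument you use to see $f$ is \'etale, concludes $X_{\ol{K}}$ is an abelian variety, and uses the already-known birationality to get that $f$ is an isomorphism, finishing with a Galois-descent step. You instead secure $c\neq 0$ via the Beauville--Bogomolov decomposition, a heavier tool (it rests on Yau's solution of the Calabi conjecture), and because you do not have birationality for free, you close with the Serre--Lang theorem on \'etale covers of abelian varieties plus the universal property of the Albanese torsor. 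Both routes reduce the theorem to the same core point --- the Albanese map is generically \'etale --- and yours has the stylistic advantage of working with $\mathrm{Alb}^1(X)$ throughout, which absorbs the paper's closing descent argument. One step you should spell out: after BBD gives a finite \'etale $\pi\colon T\to X$ with $T$ a $g$-dimensional abelian variety, the inference to $c\neq 0$ needs a line. For instance, $\pi^*\colon\rH^0(X,\Omega^1_X)\to\rH^0(T,\Omega^1_T)$ is an injection of $g$-dimensional spaces, hence an isomorphism, so $\pi^*(\omega_1\wedge\cdots\wedge\omega_g)$ is a nonzero and therefore nowhere-vanishing top form on the torus $T$; surjectivity of $\pi$ then forces $\omega_1\wedge\cdots\wedge\omega_g\neq 0$, i.e.\ $c\neq 0$. (Alternatively, once $X$ is \'etale-covered by a torus, the hypothesis $h^{1,0}(X)=g$ forces the deck group to act by translations, giving the conclusion without passing through $c$ at all.)
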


There are two reasons that we emphasize the fact that $X$ is only a torsor: in our applications of this theorem in the later sections, there may be no natural choice of base point (which is relevant when working over non-algebraically closed fields), and in later work we will be interested in working over families (in which case the existence of a section may be more subtle).

\begin{proof}
  First, extend scalars to the algebraic closure $\ol{K}$ of $K$. By
  \cite[Corollary 2]{kawamata}, $X_{\ol{K}}$ is birationally
  equivalent to its Albanese variety $X_{0,\ol{K}}$, let $f \colon
  X_{\ol{K}} \to X_{0,\ol{K}}$ be a birational morphism. In fact, $f$
  can be (uniquely) extended to a morphism on all of $X_{\ol{K}}$
  \cite[Theorem 4.9.4]{birkenhake}. One has an induced map $\rd f$ on
  cotangent bundles. The determinant of $\rd f$ is a map between the
  canonical bundles, which are trivial by assumption. The
  endomorphisms of the trivial bundle must be scalars since we assumed
  that $X_{\ol{K}}$ is projective, so $\det \rd f$ is a scalar. This
  scalar is nonzero since $f$ is generically an \'etale morphism. So
  in fact $f$ is \'etale (and hence open). Also $f$ is proper (and
  hence closed), so $f$ is an \'etale covering, which implies that
  $X_{\ol{K}}$ is an Abelian variety. Furthermore, $f$ must be an
  isomorphism since it is birational.

  Hence, choosing any point $P \in X(\ol{K})$, we have a
  $\ol{K}$-isomorphism $X_{0,\ol{K}} \to X_{\ol{K}}$ via $x \mapsto
  P+x$. This map descends to a $K$-rational map $Y \to X$ where $Y$ is
  a $K$-rational $X_0$-torsor given by the cocycle $\gamma \mapsto
  {}^\gamma P - P$, which gives the claim.
\end{proof}

\begin{remark} If we drop the assumption that $K$ be of characteristic 0, then this theorem already fails for $g=2$. In particular, it is valid if the characteristic is different from 2 or 3, but in these small characteristics, there are new exotic examples, known as quasi-hyperelliptic surfaces which come from the Bombieri--Mumford classification of surfaces (the quasi-hyperelliptic surfaces have the property that their Picard varieties are non-reduced), see \cite[p.25, Table]{bomb}.
\end{remark}

Given a smooth curve $C$ of genus $g$, we let $\Jac(C)$ denote its Jacobian, which is an Abelian variety of dimension $g$ (see \cite[Chapter 11]{birkenhake} for an analytic construction of $\Jac(C)$).

\subsection{Moduli space of vector bundles.} \label{sec:vectorbundles}

The material in this section is provided for convenience and informative purposes, since later we will see some examples of the moduli spaces discussed in this section (see Example~\ref{eg:ramanan}, Remark~\ref{rmk:coblequartic}, and \S\ref{sec:spin16}). We refer the reader to \cite{beauville:vb} for a more in-depth survey of the following.

Let $C$ be a smooth curve of genus $g \ge 2$. There is a moduli space
$SU_C(n,d)$ which parametrizes rank $n$ semistable vector bundles of
degree $d$ on $C$. It has dimension $n^2(g-1) + 1$. Let $SU_C(n,L)$
denote the moduli space of rank $n$ semistable vector bundles on $C$
with determinant equal to $L$. We write $SU_C(n)$ for
$SU_C(n,\cO_C)$. Then $SU_C(n)$ has dimension $(g-1)(n^2-1)$ and is
Gorenstein and has rational singularities. The Picard group of
$SU_C(n)$ is infinite cyclic. Let $\cL$ be its ample generator, which
we call the theta divisor. The canonical bundle is $\cL^{-2n}$.

Now focus on $n=2$. If $g=2$, then $SU_C(2) \cong \bP^3$, and for
$g>2$, the singular locus of $SU_C(2)$ is bundles of the form $L
\oplus L^{-1}$ and is naturally identified with the Kummer quotient of
the Jacobian of degree $g-1$ line bundles on $C$. Also,
$\rh^0(SU_C(n); \cL) = 2^g$ and the map given by $\cL$ is an embedding
if $C$ is not hyperelliptic. Furthermore, the restriction of $\cL$ is
a $(2, \dots, 2)$-polarization. See \cite{brivio} and \cite{geemen}
for more details.

Finally, we state the Verlinde formula, which gives the dimension of
the space of sections of powers of $\cL$. For $n=2$, we have 
\[
\rh^0(SU_C(2); \cL^k) = \left( \frac{k+2}{2} \right)^{g-1}
\sum_{j=1}^{k+1} (\sin\frac{\pi j}{k+2})^{-2g+2},
\]
see \cite[\S 5]{beauville:verlinde} for the case of general $n$.

\subsection{Degeneracy loci.}

The main idea of the paper is in the following construction.

\begin{construction} \label{construct:main}
Start with a Vinberg representation $(G,U)$ of affine type (see \S\ref{section:vinberg}). Choose a parabolic subgroup $P$ of $G$. We can realize $U$ as the sections of a homogeneous bundle $\cU$ over the homogeneous space $G/P$ using \S\ref{section:bott}.  In each case that we consider, the fibers of $\cU$ can naturally be interpreted as another Vinberg representation $(G',U')$ of finite type (more specifically, $G'$ will be the Levi subgroup of the stabilizer of the point where the fiber lives). We use information about the orbit closures in $(G',U')$ and patch them together to get subvarieties $\cY$ of the total space of $\cU$.

Given a section $v \in \rH^0(X; \cU)$, we consider the subvarieties $v(G/P) \cap \cY$, and in particular, when the grade of the ideal sheaf does not change. In all of the orbit closures in $U'$ of relevance, we give free resolutions for their coordinate rings and some related modules. Then this gives a locally free resolution of $v(X) \cap \cY \subset v(G/P) \cong G/P$ via Theorem~\ref{thm:eagonnorthcott}, and this will allow us to read off properties of this variety. In particular, we can try to use this resolution to calculate the canonical sheaf of $v(G/P) \cap \cY$ and the cohomology of its structure sheaf. 
\end{construction}

In all cases, we will find a choice of $P$ so that one of the degeneracy loci (or a variety closely related to it) is a torsor over an Abelian variety. We will also explore what happens when we vary the choice of $P$. In some cases, we are able to establish a direct link between these other degeneracy loci and the torsor via some classical geometric constructions (such as projective duality).

\begin{lemma} Let $Y^\circ$ be a $G'$-orbit in $U'$ and let $\cY^\circ$ be the union of these $G'$-orbits over all fibers. There is a nonempty open subset $U_Y^{\rm gen} \subset U$ such that either $\codim(v(G/P) \cap \cY^\circ, v(G/P)) = \codim(\cY^\circ, \cU)$ for all $v \in U_Y^{\rm gen}$, or $v(G/P) \cap \cY^\circ = \emptyset$ for all $v \in U_Y^{\rm gen}$. In particular, this intersection either has expected codimension or is empty. Furthermore, if the base field has characteristic $0$, there is a nonempty open subset $U_Y^{\rm sm}$ such that $v(G/P) \cap \cY^\circ$ is smooth.
\end{lemma}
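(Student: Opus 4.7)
The plan is to use the standard incidence-variety trick. Specifically, I would introduce the evaluation morphism
\[
\mathrm{ev}\colon U \times (G/P) \longrightarrow \cU,\qquad (v,x)\longmapsto v(x),
\]
which is the geometric incarnation of the canonical map of locally free sheaves $U\otimes\cO_{G/P}\to \cU$. In the Borel--Weil setup of \S\ref{section:bott} this map is surjective (i.e., $\cU$ is globally generated by $U$), so $\mathrm{ev}$ is a smooth surjection. I would then set $Z := \mathrm{ev}^{-1}(\cY^\circ) \subset U\times (G/P)$ and study the two projections $\pi_U\colon Z\to U$ and $\pi_X\colon Z\to G/P$; the point is that the fiber of $\pi_U$ over $v\in U$ is canonically identified with $v(G/P)\cap \cY^\circ$.

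Before analyzing these fibers, I would observe that $\cY^\circ$ is smooth: being $G$-invariant and equal to the $G'$-orbit $Y^\circ$ over each fiber of $\cU\to G/P$, it is a $G$-homogeneous fiber bundle over $G/P$ with smooth homogeneous fiber, hence smooth of codimension $c:=\codim(\cY^\circ,\cU)$ in $\cU$. Combined with smoothness of $\mathrm{ev}$, this makes $Z$ smooth of pure codimension $c$ in $U\times(G/P)$, and in particular $\dim Z = \dim U + \dim(G/P) - c$.

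The required dichotomy then drops out of Chevalley's theorem applied to $\pi_U$. If $\pi_U$ is not dominant, the complement of the closure of $\pi_U(Z)$ is a nonempty open $U_Y^{\mathrm{gen}}\subset U$ over which the fibers are empty. If $\pi_U$ is dominant, generic flatness (equivalently, upper semi-continuity of fiber dimension together with the dimension count above) produces a nonempty open $U_Y^{\mathrm{gen}}\subset U$ over which every fiber has dimension exactly $\dim Z - \dim U = \dim(G/P) - c$, i.e., codimension exactly $c$ in $v(G/P)\cong G/P$. For the smoothness assertion I would invoke generic smoothness (Sard's theorem) for the dominant morphism $\pi_U$ between smooth varieties over a field of characteristic $0$: there is a nonempty open $U_Y^{\mathrm{sm}}\subset U_Y^{\mathrm{gen}}$ over which $\pi_U$ is smooth, hence so are all its fibers. (In the non-dominant case, one simply sets $U_Y^{\mathrm{sm}}:=U_Y^{\mathrm{gen}}$, as the empty fibers are trivially smooth.)

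The main obstacle I anticipate is verifying smoothness of $\mathrm{ev}$, which reduces to global generation of $\cU$ by $U$; for the Borel--Weil bundles of the paper this is automatic, but in a broader context one might need to replace $\cU$ by the sub-bundle it generates before running the argument. Everything else is the usual Bertini-style package — incidence variety, generic flatness, generic smoothness — and should present no further difficulty.
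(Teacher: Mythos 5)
Your proposal is correct and follows essentially the same route as the paper: form the incidence variety $Z=\{(v,x):v(x)\in Y^\circ\}$, show it is smooth of the expected dimension by exploiting surjectivity of the restriction maps $U\to\cU(x)$ (which the paper derives from irreducibility of $\cU(x)$ as a $G'$-module, and which you phrase as global generation), then project to $U$ and apply the dominant/non-dominant dichotomy plus generic smoothness. The only cosmetic difference is that you establish smoothness of $Z$ by pulling back the smooth $\cY^\circ$ along a smooth evaluation map, whereas the paper establishes it by viewing $Z\to G/P$ as a fiber bundle with smooth fibers; these are two phrasings of the same observation.
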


\begin{proof} Define $Z = \{(v, x) \in U \times G/P \mid v(x) \in
  Y^\circ\}$. Let $\pi_1 \colon Z \to U$ and $\pi_2 \colon Z \to G/P$
  be the projection maps. We claim that $\pi_2$ is a fiber bundle with
  smooth fibers. Fix $x \in G/P$ and let $\cU(x)$ be the fiber of
  $\cU$ at $x$. Then the restriction map $\rho_x \colon U \to
  \cU(x)$ is $G'$-linear and it is surjective since $\cU(x)$ is
  irreducible as a $G'$-module. Hence the map $\rho_x \colon
  \rho_x^{-1}(Y^\circ) \to Y^\circ$ is an affine bundle. But $Y^\circ$
  is smooth, and $\rho_x^{-1}(Y^\circ) = \pi_2^{-1}(x)$, so the claim
  follows. In particular, $Z$ is smooth and
  \[
  \dim Z = \dim G/P + \dim U - \codim(Y^\circ, U').
  \]

  Now consider the map $\pi_1$. If it is dominant, then the fibers
  over a nonempty open subset $U_Y^{\rm gen}$ have dimension $\dim G/P
  - \codim(Y^\circ, U')$ and hence have expected
  codimension. Otherwise, the fibers are empty over a nonempty open
  subset $U_Y^{\rm gen}$. Given $v \in U$, we have an identification
  $\pi_1^{-1}(v) = v(G/P) \cap \cU$, so this proves the first claim.

  The last statement follows from generic smoothness applied to
  $\pi_1$.
\end{proof}


We will define $U^{\rm gen} \subset U$ to be the intersection of
$U_Y^{\rm gen}$ over all orbits $Y$ in $U'$, and similarly we define
$U^{\rm sm}$.

\begin{remark} The idea of studying degeneracy loci using perfect
  complexes rather than cohomology class formulas has been considered
  by the second author in \cite{sam} for the class of Schubert
  determinantal loci.
\end{remark}

\begin{example} \label{eg:ramanan}
Let $V$ be a vector space of dimension $2n$ and let $q \in S^2 V$ be a nondegenerate quadratic form. Denote the quotient $S^2 V / \langle q \rangle$ by $S^2_0 V$.

Consider the action of $\SO(V) \times \bC^*$ on $S^2_0 V$. Given a nondegenerate quadric $q' \in S^2_0 \bC^{2n}$, we can form the pencil $xq + yq'$. The determinant of the associated symmetric matrix gives us $2n$ points in $\bP^1$ and hence a hyperelliptic curve $C$ of genus $n-1$, and this process is reversible. This situation was considered by Weil.

Now consider the intersection of the quadrics defined by $q$ and $q'$ in $\bP(V^*)$. Then the variety of $\bP^{n-2}$'s in $q \cap q'$ is isomorphic to the Jacobian of $C$ (after fixing a base point), and the variety of $\bP^{n-3}$'s in $q \cap q'$ is isomorphic to the moduli space $SU_C(2,L)$ (see \S\ref{sec:vectorbundles}) where $L$ is any line bundle of odd degree. See \cite[Theorems 1, 2]{desale} for details.

These constructions can be interpreted as degeneracy loci as follows. Consider the orthogonal Grassmannian ${\bf OGr}(n-1,V^*)$, which is the subvariety of $\Gr(n-1,V^*)$ whose points are totally isotropic subspaces for $q$. The trivial bundle $\Gr(n-1,V^*) \times V^*$ has a tautological rank $n-1$ subbundle $\cR = \{(x,W) \mid x \in W\}$. Then we have $S_0^2 V = \rH^0({\bf OGr}(n-1,V^*); S^2 \cR^*)$ and $q'$ gives a generic section whose zero locus is the variety of $\bP^{n-2}$'s in $q \cap q'$. Similar comments apply to the variety of $\bP^{n-3}$'s using ${\bf OGr}(n-2,V^*)$. Modular interpretations for the degeneracy loci for the other Grassmannians ${\bf OGr}(k,V^*)$ are given by Ramanan \cite[\S 6, Theorem 3]{ramanan}.
\end{example}

\section{$\bC^5 \otimes \bigwedge^2 \bC^5$.}

In the rest of the article, we will work over the complex numbers
$\bC$. In fact, many results will hold over more general fields, but
as we have not done a systematic investigation of the correct
hypotheses, we will not make any attempt to be more general.

The analysis of the representation $\bC^5 \otimes \bigwedge^2 \bC^5$
is in fact easy to handle by more direct means, but we want to
illustrate our approach. We also mention that Fisher has examined this
case as well, see \cite{fisher1} and \cite{fisher2}.

Let $A$ and $B$ be vector spaces of dimension 5. The relevant data:

\begin{compactitem}
\item $U = A \otimes \bigwedge^2 B$
\item $G = (\GL(A) \times \GL(B)) / \{(x,x^{-1}) \mid x \in \bC^*\}$
\item $G/P = \bP(A^*) = \Gr(1,A^*)$
\item $\cU = \cR^* \otimes \bigwedge^2 \ul{B} \cong \cO(-1) \otimes
  \bigwedge^2 \ul{B}$
\item $U' = \bigwedge^2 \bC^5$
\item $G' = \GL_5(\bC)$
\end{compactitem}

The ring of invariants $\Sym(U^*)^{(G,G)}$ is a polynomial ring with generators of degrees 20, 30, and the graded Weyl group is Shephard--Todd group 16 \cite[\S 9]{vinberg}.

\subsection{Modules over $\cO_{U'}$.}

We are only interested in the ideal of $4 \times 4$ Pfaffians of
$U'$. This situation was explained in
Example~\ref{eg:buchsbaumeisenbud}. 

\subsection{Geometric data from a section.} \label{sec:geomc5w25}

The constructions in this section work over an arbitrary field.

We get the following locally free resolution over $\cO_{\cU} =
\Sym(\bigwedge^2 \ul{B}^* \otimes \cO_\cU(1))$:
\[
0 \to (\det \ul{B}^*)^{\otimes 2} \otimes \cO_{\cU}(-5) \to (\det
\ul{B}^*) \otimes \ul{B}^* \otimes \cO_{\cU}(-3) \to \bigwedge^4
\ul{B}^* \otimes \cO_{\cU}(-2) \to \cO_{\cU} \to \cO_{\cC} \to 0
\]
where $\cC$ has codimension 3 in the total space of $\cU$. Its
singular locus is the zero section of $\cU$, and has codimension 10 in
$\cU$.

For $v \in U^{\rm gen}$, $C_v = \cC \cap v(\bP(A^*))$ will have
codimension 3 in $v(\bP(A^*)) \cong \bP(A^*)$. By generic perfection,
we get a locally free resolution for $\cO_C$:
\begin{align*}
  0 \to (\det \ul{B}^*)^{\otimes 2} \otimes \cO_{\bP(A^*)}(-5) \to
  (\det \ul{B}^*) \otimes \ul{B}^* \otimes \cO_{\bP(A^*)}(-3)\\*
  \to \bigwedge^4 \ul{B}^* \otimes \cO_{\bP(A^*)}(-2) \to
  \cO_{\bP(A^*)} \to \cO_{C} \to 0.
\end{align*}
This gives enough information to see that $\omega_C = \cO_C$, $\dim
\rH^0(C; \cO_C) = 1$, and that $\dim \rH^1(C; \cO_C) = 1$. In
particular, $C$ is a curve of genus 1. We can also deduce that $C$ is
projectively normal and embedded by a complete linear series.

Conversely, given a smooth curve $C$ of genus 1 embedded in $\bP(A^*)$ by a complete linear series, its homogeneous ideal $I$ is generated by 5 quadrics and is a codimension 3 Gorenstein ideal. The Buchsbaum--Eisenbud classification of such ideals says that we can recover a section $v \in U$ which gives rise to $C$. 

\begin{theorem}
We have a bijection between $G$-orbits in $U^{\rm sm}$ and the set of pairs $(C,\cL)$ where $C$ is a genus $1$ curve and $\cL$ is a degree $5$ line bundle on $C$.
\end{theorem}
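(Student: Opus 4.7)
The plan is to construct the bijection explicitly via $v \mapsto (C_v, \cL_v)$, where $C_v \subset \bP(A^*)$ is the degeneracy locus defined in \S\ref{sec:geomc5w25} and $\cL_v := \cO_{\bP(A^*)}(1)|_{C_v}$. The locally free resolution already computed there shows that $C_v$ is a genus $1$ curve embedded by a complete linear series in $\bP^4$, so $\deg \cL_v = 5$, and smoothness of $C_v$ for $v \in U^{\rm sm}$ is built into the definition of $U^{\rm sm}$. To see the pair depends only on the $G$-orbit: an element of $\GL(A)$ acts linearly on $\bP(A^*)$ and carries $C_v$ isomorphically onto $C_{g_A \cdot v}$ while preserving $\cO(1)$; and an element of $\GL(B)$ fixes $C_v$ as a subscheme of $\bP(A^*)$, since the $4 \times 4$ Pfaffian condition cutting out $C_v$ depends only on the rank of the skew form in $\bigwedge^2 B$, which is $\GL(B)$-invariant.

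For injectivity on orbits, I would argue as follows. Suppose $\phi \colon (C_v, \cL_v) \xrightarrow{\sim} (C_{v'}, \cL_{v'})$ is an isomorphism of pairs. Since both curves are embedded by complete linear series, $H^0(\bP(A^*); \cO(1)) = A^*$ identifies canonically with $H^0(C_{(\cdot)}; \cL_{(\cdot)})$ on each side, so after fixing a lift of $\phi^* \cL_{v'} \cong \cL_v$, the pullback $\phi^*$ produces some $\alpha \in \GL(A)$ with $\alpha(C_v) = C_{v'}$ as subschemes of $\bP(A^*)$. Replacing $v$ by $\alpha \cdot v$ we reduce to the case $C_v = C_{v'}$. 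Now the Buchsbaum--Eisenbud structure theorem (Example~\ref{eg:buchsbaumeisenbud}) shows that the $5 \times 5$ skew matrix of linear forms in $\Sym(A)$ whose $4 \times 4$ Pfaffians generate the common codimension-$3$ Gorenstein ideal $I_{C_v}$ is unique up to the natural change-of-basis action on $B$; hence $v$ and $v'$ differ by a $\GL(B)$-element, so they lie in the same $G$-orbit.

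For surjectivity, start with a smooth projective genus $1$ curve $C$ together with a degree $5$ line bundle $\cL$. Riemann--Roch gives $h^0(C, \cL) = 5$, and $\deg \cL \ge 2g+1 = 3$ makes $\cL$ very ample, so fixing an isomorphism $H^0(C, \cL) \cong A^*$ embeds $C$ as a projectively normal elliptic quintic in $\bP(A^*)$. A second Riemann--Roch computation yields $\dim (I_C)_2 = 15 - 10 = 5$, and the ideal is Gorenstein of codimension $3$ since $\omega_C \cong \cO_C$. The existence half of Buchsbaum--Eisenbud then produces a $5 \times 5$ skew matrix of linear forms, equivalently a vector $v \in A \otimes \bigwedge^2 B$, whose $4 \times 4$ Pfaffians generate $I_C$; smoothness of $C$ places $v$ in $U^{\rm sm}$, and by construction $C_v = C$ with $\cL_v \cong \cL$.

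The main obstacle is really just careful bookkeeping in the injectivity step: the scalar ambiguity from the choice of lift of $\phi^* \cL_{v'} \cong \cL_v$ must be absorbed by the central $\bC^*$ in $G$, and one has to check that the residual ambiguity in the skew matrix matches exactly the $\GL(B)$-action rather than some larger or smaller group, which is the content of the uniqueness half of the Buchsbaum--Eisenbud classification. All the deep input has already been packaged in Example~\ref{eg:buchsbaumeisenbud} and in \S\ref{sec:geomc5w25}, so the remainder of the proof is a translation between the geometric data $(C, \cL)$ and the algebraic data of a skew matrix of linear forms.
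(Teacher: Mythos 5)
Your proposal is correct and follows essentially the same route the paper takes: the forward direction is the degeneracy-locus construction from \S\ref{sec:geomc5w25} together with the resolution computation showing $C_v$ is a genus $1$ projectively normal quintic, and the inverse direction is the Buchsbaum--Eisenbud structure theorem (Example~\ref{eg:buchsbaumeisenbud}) producing the $5\times 5$ alternating matrix of linear forms. You have simply made explicit the injectivity bookkeeping (the $\GL(A)$ versus $\GL(B)$ versus central $\bC^*$ ambiguities) that the paper leaves implicit in the phrase ``Buchsbaum--Eisenbud classification.''
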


\subsection{Examples of singular quintic curves.}

In this section, we give a few examples of degenerations of the smooth elliptic quintic $C$. Describing degenerations of the Abelian varieties in the later examples will require more effort and will appear in future work.

We pick homogeneous coordinates $z_1, \dots, z_5$ on $\bP^4$.

\begin{example} Here is one example of a section that gives a rational
  nodal curve:
  \[
  \begin{pmatrix}0& {{z}_{5}}& {{z}_{1}}& {{z}_{2}}& {z}_{3}\\
    -{z}_{5}& 0& {z}_{2}& {z}_{3}&    {{z}_{4}}\\
    -{z}_{1}& {-{z}_{2}}& 0& {z}_{4}&    {{z}_{5}}\\
    -{z}_{2}& {-{z}_{3}}& {-{z}_{4}}& 0&    0\\
    {-{z}_{3}}& -z_{4}& {-z}_{5}& 0&    0
    \end{pmatrix}
  \]
  It is given by the parametrization $[a:b] \mapsto [a^5+b^5 : ab^4 :
  a^2b^3 : a^3b^2 : a^4b]$ and its node is the point
  $[1:0:0:0:0]$. The stabilizer subgroup in $G$ of this curve is the
  dihedral group of size 10 generated by the transformations $[a:b]
  \mapsto [b:a]$ and $[a:b] \mapsto [a:\zeta b]$ where $\zeta$ is a
  primitive 5th root of unity \cite[proof of Lemma 2.3]{fisher2}.
  This is  not an unstable orbit.

  Furthermore, its secant variety is an irreducible quintic
  hypersurface.
\end{example}

\begin{example} We can get a triangle consisting of two smooth
  rational quadrics and a line. Here is one example:
  \[
  \begin{pmatrix}0& 0& {z}_{4}& {z}_{3}&    {z}_{2}\\
    0& 0& 0& {z}_{2}&    {z}_{1}\\
    {-{z}_{4}}& 0& 0& 0&    {-{z}_{5}}\\
    {-{z}_{3}}& {-{z}_{2}}& 0& 0&    {-{z}_{4}}\\
    {-{z}_{2}}& {-{z}_{1}}& {z}_{5}& {z}_{4}& 0
  \end{pmatrix}
  \]
  The quadrics are $[a:b] \mapsto [a^2:ab:b^2:0:0]$ and $[a:b]\mapsto
  [0:0:a^2:ab:b^2]$ and the line is $[a:b] \mapsto [a:0:0:0:b]$. The
  secant variety is the union of $z_4 = 0$, $z_2 = 0$, and the cubic
  $z_1z_4^2 + z_2^2z_5 - z_1z_3z_5 = 0$.
\end{example}
  
\begin{example} 
  We can also get a union of 5 $\bP^1$'s which are labeled with $i \in
  \bZ/5$ such that $\bP_i^1$ intersects $\bP^1_j$ if and only if $j = i
  \pm 1$, and the intersection points are distinct. This is a {\bf
    N\'eron pentagon}. All N\'eron pentagons form a single orbit since
  they are determined by their points of intersection. Here is one
  example of a section that gives a N\'eron pentagon:
  \[
  \begin{pmatrix} 0& {z}_{1}& {z}_{2}& 0&    0\\
    {-{z}_{1}}& 0& 0& {z}_{3}&    0\\
    {-{z}_{2}}& 0& 0& 0&    {z}_{4}\\
    0& {-{z}_{3}}& 0& 0&    {z}_{5}\\
    0& 0& {-{z}_{4}}& {-{z}_{5}}& 0 \end{pmatrix}.
  \]
This is the set of points in $\bP(A^*)$ with at least 3 coordinates equal to 0. Its stabilizer subgroup contains the normalizer of the diagonal subgroup in $\SL(A)$. So the orbit of N\'eron pentagons has codimension at least 5.
  
Its secant variety is the hypersurface $z_1z_2z_3z_4z_5 = 0$. 
\end{example}

\begin{example} Here is a non-reduced example of a union of a rational
  normal cubic and a non-reduced line which intersect with
  multiplicity 2:
  \[
  \begin{pmatrix}0& 0& {z}_{5}& {z}_{3}&    {z}_{2}\\
    0& 0& 0& {z}_{2}&    {z}_{1}\\
    {-{z}_{5}}& 0& 0& {z}_{4}&    {z}_{3}\\
    {-{z}_{3}}& {-{z}_{2}}& {-{z}_{4}}& 0&    0\\
    {-{z}_{2}}& {-{z}_{1}}& {-{z}_{3}}& 0& 0
  \end{pmatrix}.
  \]
  The cubic is $[a:b]\mapsto [a^3:a^2b:ab^2:b^3:0]$ and the line is
  given by the ideal $(z_1,z_2,z_3^2)$.

  Its secant variety is the union of the hyperplane $z_5 = 0$ and the
  non-reduced quartic $(z_2^2 - z_1z_3)^2 = 0$.
\end{example}

\begin{example} Here is a rational cuspidal cubic:
\[
\begin{pmatrix}0& {z}_{1}& {z}_{4}& 0&  {z}_{5}\\
  {-{z}_{1}}& 0& 0& {z}_{5}&  {z}_{2}\\
  {-{z}_{4}}& 0& 0& {z}_{2}&  {z}_{3}\\
  0& {-{z}_{5}}& {-{z}_{2}}& 0&  {z}_{4}\\
  {-{z}_{5}}& {-{z}_{2}}& {-{z}_{3}}& {-{z}_{4}}& 0
      \end{pmatrix}.
\]
Its cusp point is $[0:0:1:0:0]$ and it has the parametrization
\[
[s:t] \mapsto [\sqrt{-1} t^5 : s^3t^2 : s^5 : \sqrt{-1} s^2t^3 : st^4
] .
\]
This vector lies in the unstable locus of the representation. The group of automorphisms of this curve that extend to automorphisms
of $\bP^4$ is generated by scaling $t$, so the orbit
of this curve in $\bC^5 \otimes \bigwedge^2 \bC^5$ has codimension 2. In particular, it gives a generic point of the unstable locus.

Its secant variety is an irreducible quintic hypersurface.
\end{example}

\subsection{Secant and tangential varieties.}

Here is a different approach:

\begin{compactitem}
\item $G/P = \Gr(2,A^*)$
\item $\cU = \cR^* \otimes \bigwedge^2 \ul{B}$
\item $U' = \bC^2 \otimes \bigwedge^2 \bC^5$
\item $G' = (\GL_2(\bC) \times \GL_5(\bC)) / \{(x,x^{-1}) \mid x \in \bC^*\}$
\end{compactitem}

The relevant $G'$-orbit closures in $U'$ are of codimensions 2, 4, and 5. The singular locus and the non-normal locus of the codimension 2 orbit closure are both the codimension 4 orbit closure $S'$. Also, $S'$ is smooth along the codimension 5 orbit closure $T'$. Furthermore, $S'$ and $T'$ can be identified as the secant and tangential varieties of the affine cone over the Segre variety $\bP^1 \times \Gr(2,5)$. Let $\cS$ and $\cT$ be the global versions of these varieties, and given a section $v \in U$, let $S$ and $T$ be the corresponding degeneracy loci.

\begin{proposition} $S$ is the locus of planes $W$ such that $\deg(\bP(W)
  \cap C) \ge 2$ and $T$ is the locus of planes $W$ such that $\bP(W)$
  is tangent to some point of $C$.
\end{proposition}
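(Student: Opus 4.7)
The plan is to translate the algebraic condition $v|_W \in S'$ (resp. $T'$) into the geometric incidence condition on $\bP(W) \cap C$. Throughout, for $W \in \Gr(2,A^*)$, the restriction $v|_W \in W^* \otimes \bigwedge^2 B = U'$ is exactly the pencil of $5 \times 5$ skew-symmetric forms obtained by evaluating $v$ along the line $\bP(W) \subset \bP(A^*)$. Since $C$ is cut out by the submaximal Pfaffians, a point $[a] \in \bP(W)$ lies in $C$ iff $v(a) \in \bigwedge^2 B$ is decomposable (rank $\le 2$). Consequently $\bP(W)\cap C$ is cut out on $\bP(W) \cong \bP^1$ by the five submaximal Pfaffians of $v|_W$, each a quadric; hence it is a scheme of length at most $2$.

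For the statement about $S$: since $S'$ is the secant variety of the cone over the Segre $\bP^1 \times \Gr(2,B)$, the generic element of $S'$ is a sum $\alpha_1 \otimes \omega_1 + \alpha_2 \otimes \omega_2$ with $\alpha_i \in W^*$ linearly independent and $\omega_i \in \bigwedge^2 B$ decomposable. Given such a decomposition, let $[a_i] \in \bP(W)$ be the unique zero of $\alpha_i$; then $v(a_i) = \alpha_j(a_i)\omega_j$ for $j \neq i$ is decomposable, so $[a_i] \in \bP(W) \cap C$, which forces $\deg(\bP(W)\cap C) \ge 2$. Conversely, if $[a_1], [a_2] \in \bP(W) \cap C$ are distinct, then taking $\{\alpha_1, \alpha_2\}$ to be the dual basis of $\{a_1, a_2\}$ in $W^*$ yields $v|_W = \alpha_1 \otimes v(a_1) + \alpha_2 \otimes v(a_2)$, where each $v(a_i)$ is decomposable by hypothesis; this exhibits $v|_W \in S'$. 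The boundary case (length-$2$ scheme supported at a single point) corresponds to the tangential variety $T' \subset S'$, handled next.

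For the statement about $T$: a general element of the tangential variety $T'$ lies in the affine tangent space to the Segre cone at a smooth point $\alpha \otimes \omega$, namely $v|_W = \beta \otimes \omega + \alpha \otimes \omega'$ with $\omega = b_1 \wedge b_2$ and $\omega' \in T_\omega(\mathrm{rank} \le 2) = b_1 \wedge B + b_2 \wedge B$. If $[a]$ is the zero of $\alpha$ in $\bP(W)$, then $v(a) = \beta(a)\omega$ is decomposable, so $[a] \in C$. Picking a tangent direction $a'$ with $\alpha(a') \neq 0$, we compute
\[
v(a + t a') = (\beta(a) + t\beta(a'))\omega + t\,\omega',
\]
whose first-order perturbation $\beta(a')\omega + \omega'$ lies in $T_\omega(\mathrm{rank}\le 2)$; hence each Pfaffian $\Pf_i(v(a + ta'))$ vanishes to order $\ge 2$ at $t=0$, i.e.\ $\bP(W)$ is tangent to $C$ at $[a]$. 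Conversely, tangency at $[a]$ with tangent direction $a'$ forces $v(a)$ to be decomposable and $v(a')$ to lie in the tangent space to the rank-$\le 2$ locus at $v(a)$, and writing $v|_W$ in the dual basis of $\{a, a'\}$ exhibits $v|_W \in T'$.

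The main obstacle is the careful bookkeeping for degenerate cases — where the secant decomposition collapses (for instance $\omega_1, \omega_2$ proportional, or the two ``intersection points'' coalesce) — and verifying that the tangent space $T_\omega(\mathrm{rank} \le 2) = b_1 \wedge B + b_2 \wedge B$ really controls the second-order vanishing of all five Pfaffians simultaneously. The cleanest way to handle the first issue is to work on the level of the degeneracy locus (taking Zariski closures throughout), so that the set-theoretic description at generic $W \in S$ or $T$ determines the full locus by closure; the second point is a standard computation on the Grassmannian via the Plücker embedding.
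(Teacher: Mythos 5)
Your proposal is correct and follows essentially the same route as the paper. The paper's argument is quite condensed: it observes that $W \in S\setminus T$ iff $v|_W$ decomposes as a sum of two rank-one tensors $\alpha_1\otimes\omega_1 + \alpha_2\otimes\omega_2$ with $\alpha_1,\alpha_2$ independent, identifies the fiber of $\cU$ at $W$ with $\rH^0(\bP(W); \cO(1)\otimes\bigwedge^2\ul{B})$, notes that the zeros of the $\alpha_i$ are exactly the two points of $\bP(W)\cap C$, and then disposes of the tangential case $T$ by observing it lies in the closure of $S\setminus T$ ``via a limiting argument.'' Your version is the same argument with the limiting step made explicit: you compute the first-order perturbation of $v$ along $\bP(W)$ through the point $[a]$ and verify that it lands in the tangent space to the rank $\le 2$ locus, so all the Pfaffians vanish to order $\ge 2$. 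That is a genuine improvement in precision over the paper's one-line dismissal of the $T$ case. One small slip: in the displayed computation $v(a+ta')$ the $t\,\omega'$ term should carry the factor $\alpha(a')$; this is harmless since you may normalize $\alpha(a')=1$, but it should be stated.
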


\begin{proof} Pick $W \in \Gr(2,A^*)$. Then $W \in S \setminus T$ if
  and only if $v(S) \in (\cR^* \otimes \bigwedge^2 \ul{B})(W)$ is a
  sum of the form $a_1 \otimes (b_1 \wedge c_1) + a_2 \otimes (b_2
  \wedge c_2)$ where $a_1$ and $a_2$ are linearly independent. But we
  can also identify this fiber with $\rH^0(\bP(W); \cO_{\bP(A^*)}(1)
  \otimes \bigwedge^2 \ul{B})$ when we identify $\bP(W)$ with a line
  in $\bP(A^*)$. This means that $\bP(W) \cap C$ consists of two
  points corresponding to the vanishing of $a_1$ and $a_2$. Since $T$
  is in the closure of $S \setminus T$, we finish via a limiting
  argument. 
\end{proof}

In particular, $T$ is a smooth curve. To calculate its genus, we can
use a free resolution for $T'$ (here we use $(\lambda; \mu)(-i)$ as
shorthand for $\bS_\lambda(\bC^2) \otimes \bS_\mu(\bC^5) \otimes
\Sym(U'^*)(-i)$):
\begin{align*}
  \bF_1 &= (2,1;2,1,1,1,1)(-3) + (2,2;2,2,2,2,0)(-4)\\
  \bF_2 &= (2,2;2,2,2,1,1)(-4) + (4,1;2,2,2,2,2)(-5) +
  (3,2;3,2,2,2,1)(-5) \\
  \bF_3 &= (4,2;3,3,2,2,2)(-6) + (3,3;4,2,2,2,2)(-6) +
  (4,3;3,3,3,3,2)(-7)\\
  \bF_4 &= (4,2;4,3,3,3,3)(-8) + (4,4;4,3,3,3,3)(-8)\\
  \bF_5 &= (6,4;4,4,4,4,4)(-10).
\end{align*}

So a locally free resolution for $T$ is
\begin{align*}
  \bF_1 &= \cR(-1)^5 + \cO(-2)^{15}\\
  \bF_2 &= \cO(-2)^5 + S^3(\cR)(-1) + \cR(-2)^{24}\\
  \bF_3 &= S^2(\cR)(-2)^{10} + \cO(-3)^{15} + \cR(-3)^5\\
  \bF_4 &= S^2(\cR)(-2)^5 + \cO(-4)^5\\
  \bF_5 &= S^2(\cR)(-4),
\end{align*}
and we see that $T$ has genus 1.

\begin{proposition} $C \cong T$
\end{proposition}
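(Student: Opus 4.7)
I plan to construct the Gauss (tangent) morphism $\tau\colon C\to T$ and show it is an isomorphism. For $p\in C$, let $\widetilde{T_pC}\subset A^*$ denote the $2$-dimensional subspace whose projectivization is the projective tangent line to $C$ at $p$; the assignment $\tau(p)=\widetilde{T_pC}\in\Gr(2,A^*)$ is a morphism (the standard Gauss map of the smooth curve $C$). By the preceding proposition, $\tau(p)$ lies in $T$ because $\bP(\widetilde{T_pC})$ is a line tangent to $C$ at $p$, so $\tau$ factors through $T$.

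Both $C$ and $T$ are smooth irreducible projective curves of genus $1$, as read off from the locally free resolutions constructed earlier. Since $C$ spans $\bP(A^*)=\bP^4$, its tangent line varies with the base point, so $\tau$ is non-constant and therefore a finite flat morphism of some degree $d\geq 1$. Riemann--Hurwitz yields $0=2g(C)-2=d(2g(T)-2)+\deg R=\deg R$, so $\tau$ is \'etale of degree $d$, and the task reduces to proving $d=1$, i.e., that the generic tangent line of $C$ is not a bitangent.

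The main obstacle is this last step. One clean route is via intersection numbers: for the Gauss map of a smooth curve of degree $\delta$ and genus $g$ in projective space one has $\tau^{*}\cO_{\Gr(2,n+1)}(1)\cong\omega_C\otimes\cO_C(2)$, of degree $(2g-2)+2\delta=10$ for our elliptic normal quintic. The identity $d\cdot(\text{Pl\"ucker degree of }T)=10$ then pins down $d$ once the Pl\"ucker degree of $T$ is computed from its locally free resolution via a Chern class calculation; the expected value of $10$ forces $d=1$. Alternatively, one can verify generic injectivity of $\tau$ by an explicit computation with a single smooth section $v$, and then propagate to a Zariski open subset of $U^{\rm sm}$ by upper semicontinuity of the fiber cardinality of the finite morphism $\tau$. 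Either way, $\tau\colon C\to T$ becomes a bijective \'etale morphism of smooth projective curves, hence an isomorphism.
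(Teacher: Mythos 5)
Your approach is the same as the paper's: both you and the authors construct the Gauss map $\psi\colon C\to T$ sending a point to its tangent line and apply Riemann--Hurwitz. Where you part ways is in the degree of care. The paper's proof reads, in full, ``this is a finite morphism, and by Riemann--Hurwitz, the ramification divisor is $0$; hence $\psi$ is an isomorphism.'' As you correctly observe, this last implication does not follow from Riemann--Hurwitz alone: an \'etale finite morphism of degree $d$ between two genus-$1$ curves exists for every $d\geq 1$ (isogenies of elliptic curves), so the ramification computation only shows $\psi$ is \'etale of \emph{some} degree, not that the degree is $1$. You have put your finger on a genuine elision in the paper.

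That said, your proposal stops short of closing the gap---you lay out two plausible routes (a Chern class/Pl\"ucker degree computation, or a one-example check plus semicontinuity) but do not carry either through; as it stands the argument is a plan rather than a proof. The cleanest fix is simpler than either route you sketch: in characteristic $0$ the Gauss map of a smooth non-degenerate projective curve is \emph{birational} onto its image (a classical fact; more generally Zak's theorem on tangencies gives finiteness and birationality of Gauss maps for smooth $X\subsetneq\bP^n$). Birationality gives $d=1$ directly, and combined with the Riemann--Hurwitz observation that the ramification is trivial, $\psi$ is then a finite birational morphism between smooth curves, hence an isomorphism. This is almost certainly what the authors had in mind but did not state. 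Your numeric route would also work---$\deg \tau^*\cO_{\Gr}(1)=(2g-2)+2\deg C=10$, and computing the Pl\"ucker degree of $T$ from the given locally free resolution (e.g.\ via a Hilbert polynomial/Chern class computation) would pin down $d$---but it is more work than invoking birationality, and you should actually perform the computation before claiming the ``expected value'' is $10$.
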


\begin{proof} Since $C$ is smooth, we have a well-defined morphism
  $\psi \colon C \to T$ obtained by sending $x \in C$ to the tangent
  line at $x$. This is a finite morphism, and by Riemann--Hurwitz, the
  ramification divisor is 0. Hence $\psi$ is an isomorphism. 
\end{proof}

\subsection{Chow forms.}

Yet another approach is as follows. 

\begin{compactitem}
\item $G/P = \Gr(3,A^*)$
\item $\cU = \cR^* \otimes \bigwedge^2 \ul{B}$
\item $U' = \bC^3 \otimes \bigwedge^2 \bC^5$
\item $G' = (\GL_3(\bC) \times \GL_5(\bC)) / \{(x,x^{-1}) \mid x \in \bC^*\}$
\end{compactitem}

The space $U'$ contains a $G'$-invariant degree 15 hypersurface. The
corresponding degeneracy locus $X'$ is a section of $\cO(5)$.

\begin{proposition} $X'$ is the Chow form of $C$.
\end{proposition}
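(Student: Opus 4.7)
The plan is to translate the defining condition for $X'$ (vanishing of the $G'$-invariant $\Delta$ of degree $15$ on $U'$) into the geometric condition $\bP(W) \cap C \ne \emptyset$, using the Buchsbaum--Eisenbud description of $C$ from Example~\ref{eg:buchsbaumeisenbud} together with an identification of $\Delta$ as a pullback of the Chow form of the Pl\"ucker-embedded Grassmannian $\Gr(2,B) \subset \bP(\bigwedge^2 B)$.

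First, for a fixed $W \in \Gr(3,A^*)$ I identify $\cU|_W = W^* \otimes \bigwedge^2 B$ and view the restricted section as a linear map $v|_W \colon W \to \bigwedge^2 B$. The five $4 \times 4$ Pfaffians of $v|_W$, viewed as elements of $S^2 W^*$, are precisely the restrictions to $\bP(W)$ of the five quadrics that cut out $C \subset \bP(A^*)$ (Example~\ref{eg:buchsbaumeisenbud}). Thus $\bP(W) \cap C \ne \emptyset$ if and only if there exists $p \in W \setminus 0$ with $v|_W(p) \in \bigwedge^2 B$ of rank $\le 2$, equivalently, the $3$-dimensional image $\mathrm{im}(v|_W) \subset \bigwedge^2 B$ meets the Grassmannian $\Gr(2,B)$ nontrivially.

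Next I identify $\Delta$, up to a nonzero scalar, with the pullback of the Chow form of $\Gr(2,B)$ along the $G'$-equivariant rational map
\[
\sigma \colon U' \dashrightarrow \Gr(3,\bigwedge^2 B), \qquad v' \mapsto \mathrm{im}(v').
\]
Since $\Gr(2,B) \subset \bP^9$ has degree $5$, its Chow form is a hypersurface of degree $5$ in $\Gr(3,\bigwedge^2 B)$. Each Pl\"ucker coordinate on the target pulls back via $\sigma$ to a $3 \times 3$ minor of the $10 \times 3$ matrix of $v'$, i.e.\ to a cubic polynomial in $v'$, so the pullback is a $G'$-invariant polynomial on $U'$ of degree $3 \cdot 5 = 15$, necessarily a scalar multiple of $\Delta$ by one-dimensionality of the $G'$-invariants in degree $15$. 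Combined with the preceding step, $\Delta(v|_W) = 0$ if and only if $\bP(W) \cap C \ne \emptyset$, so $X'$ and the Chow hypersurface of $C$ have the same support. Both are effective divisors of class $5H \in \Pic(\Gr(3,A^*)) \cong \bZ$, and the Chow form of the smooth curve $C$ is reduced and irreducible, so the two divisors must coincide.

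The main obstacle is the identification of $\Delta$ with the pullback Chow form: it rests on the degree-$15$ $G'$-invariants of $U'$ being one-dimensional, which is classical for this prehomogeneous vector space but deserves explicit justification (for example via an equivariance-and-degree count using the decomposition of $\Sym^{15}((U')^*)$ into $G'$-irreducibles).
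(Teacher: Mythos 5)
Your argument is correct and takes a genuinely different route from the paper's. The paper constructs the Chow form of $C$ cohomologically: it pushes the locally free resolution of $\cO_C$ forward through the incidence variety $\Fl(1,3,A^*)$, obtains a two-term complex whose determinant is a section of $\cO(5)$, and then identifies that determinant with the degree-$15$ invariant $\Delta$ by checking (via LiE) that the two relevant $\GL(A)\times\GL(B)$-equivariant maps are each unique up to scalar. You instead reason directly on fibers: viewing $v|_W \colon W \to \bigwedge^2 B$ as a $3$-plane of skew forms, you observe that the five $4\times 4$ Pfaffians of $v|_W$ are precisely the restrictions of the Pfaffian quadrics cutting out $C$, so $\bP(W)$ meets $C$ iff $\operatorname{image}(v|_W)$ meets $\Gr(2,B)$ in $\bP(\bigwedge^2 B)$. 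You then identify $\Delta$ with the pullback of the Chow form of $\Gr(2,B)\subset\bP^9$ (degree $5$, since $\deg\Gr(2,5)=5$) along the Pl\"ucker map $v'\mapsto\operatorname{image}(v')$, which multiplies degree by $3$ to give a nonzero degree-$15$ semi-invariant, hence a scalar multiple of $\Delta$ by one-dimensionality of that space; a degree and irreducibility count then forces the two $\cO(5)$-divisors to coincide. Your approach is more elementary and geometrically transparent (it explains \emph{why} the invariant locus is a Chow form, via classical ``lines meeting a Grassmannian''), at the cost of needing the case $v|_W$ not injective handled explicitly (easy: then some $p\in\bP(W)$ already lies in $C$ and all $3\times3$ minors vanish) and of the uniqueness of the degree-$15$ semi-invariant, which you rightly flag; but the paper's argument also leans on an analogous multiplicity-one statement verified by computer, so you are not relying on anything stronger.
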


\begin{proof}
  To obtain the Chow form of $C$, let $\Fl(1,3,A^*)$ be a partial flag
  variety with projections $\pi_1, \pi_2$ to $\bP(A^*)$ and
  $\Gr(3,A^*)$. Then $\rR {\pi_2}_* \rL \pi_1^* \cO_C$ is
  quasi-isomorphic to a complex whose determinant is the Chow form. A
  locally free resolution of $\cO_C$ in $\bP(A^*)$ is
  \[
  0 \to (\det B^*)^2(-5) \to \bS_{2,1^4} B^*(-3) \to \bigwedge^4
  B^*(-2) \to \cO_{\bP(A^*)}.
  \]
  Applying $\rR {\pi_2}_* \pi_1^*$ to this complex gives a 2-term
  complex over $\cO_{\Gr(3,A^*)}$:
  \begin{align} \label{eqn:C5alt2C5chow}
  0 \to (\det B^*)^2 \otimes S^2 \cR(-1) \to \cO_{\Gr(3,A^*)} \oplus
  \bS_{2,1^4} B^*(-1).
  \end{align}
  In this case, the determinant is just the actual determinant of a $6
  \times 6$ matrix. This gives a section of $\cO(5)$ which is the Chow
  form of $C$. 

  We claim that this map is a sheafy version of the following map. For
  $U' = {\bC^3} \otimes {\bigwedge^2 \bC^5}$, we have $(\det
  \bC^5)^{-2} \otimes \bS_{3,1,1} (\bC^3)^* \subset S^5(U'^*)$ with
  multiplicity 1, and also $(\det \bC^5)^{-2} \otimes \bS_{3,1,1}
  (\bC^3)^* \subset S^2(U'^*) \otimes (\det \bC^3)^* \otimes
  \bS_{2,1^4} (\bC^5)^*$ with multiplicity 1, so this gives a $6
  \times 6$ matrix
  \[
  (\det \bC^5)^{-2} \otimes \bS_{3,1,1} (\bC^3)^* \otimes \Sym(U'^*)
  \to \Sym(U'^*)(5) \oplus [(\det \bC^3)^* \otimes
  \bS_{2,1^4}(\bC^5)^* \otimes \Sym(U'^*)(2)]
  \]
  whose determinant is the degree 15 invariant. 

  Taking sections of \eqref{eqn:C5alt2C5chow}, we see that the process
  of constructing the Chow form of $C$ is a map that takes a section
  $v$ to an element in $[(\det B)^2 \otimes \bS_{3,1,1} A] \oplus
  [\wedge^4 B \otimes S^2 A]$. This can be interpreted as $\GL(A)
  \times \GL(B)$-equivariant linear maps $S^5(A \otimes \bigwedge^2 B)
  \to (\det B)^2 \otimes \bS_{3,1,1} A$ and $S^2(A \otimes \bigwedge^2
  B) \to \wedge^4 B \otimes S^2 A$. But such maps are unique up to
  scalar (checked with LiE \cite{lie}), so the claim follows.
\end{proof}

\subsection{Projective duality.}

Here is another approach. 

\begin{compactitem}
\item $G/P = \Gr(4,A^*) = \bP(A)$
\item $\cU = \cR^* \otimes \bigwedge^2 \ul{B}$
\item $U' = \bC^4 \otimes \bigwedge^2 \bC^5$
\item $G' = (\GL_4(\bC) \times \GL_5(\bC)) / \{(x,x^{-1}) \mid x \in \bC^*\}$
\end{compactitem}

The space $U'$ contains a $G'$-invariant degree 40 hypersurface. The
polynomial $f$ is described as follows: let $a_1, \dots, a_{40}$ be a
basis for the Lie algebra $\bC \oplus \mathfrak{sl}_4 \oplus
\mathfrak{sl}_5$. Then for $x \in \bC^4 \otimes \bigwedge^2 \bC^5$, we
have $f(x) = \det(A_1 x \cdots A_{40} x)$.

The corresponding degeneracy locus is a degree 10 hypersurface $C'$ in
$\bP(A)$. By the Katz--Kleiman formula \cite[\S 2.3.C]{gkz}, the
projective dual of an Abelian variety $X \subset \bP^N$ is a
hypersurface of degree $(\dim X + 1)(\deg X)$. In our case, for $X =
C$ from the last section, we get a hypersurface of degree 10.

\begin{proposition} For generic $v \in V$, the projective dual of $C$
  is $C'$.
\end{proposition}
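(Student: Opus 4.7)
The plan is to establish the set-theoretic containment $C^\vee \subseteq C'$ and then conclude by comparing degrees. Both sides are hypersurfaces of degree $10$ in $\bP(A) \cong \bP^4$: for $C^\vee$ this is the Katz--Kleiman formula $\deg(C^\vee) = (\dim C + 1)\deg(C) = 2\cdot 5$ applied to the smooth elliptic quintic $C$, and for $C'$ it was noted above. Since $C$ is smooth and irreducible, so is $C^\vee$. Thus once containment holds set-theoretically, the degree-$10$ section cutting out $C'$ must vanish on the irreducible degree-$10$ hypersurface $C^\vee$, forcing $C^\vee = C'$ as subschemes.

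A point $[W]\in\Gr(4,A^*)=\bP(A)$ corresponds to a hyperplane $H=\bP(W)\subset\bP(A^*)$, and I interpret $v([W])\in W^*\otimes\bigwedge^2 B \cong U'$ via the Buchsbaum--Eisenbud correspondence of Example~\ref{eg:buchsbaumeisenbud}: it is the restriction to $H\cong\bP^3$ of the $5\times 5$ skew-symmetric matrix $\Phi$ of linear forms associated to $v$, and its $4\times 4$ Pfaffians cut out $C\cap H$ as a length-$5$ subscheme of $\bP^3$.

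Next I interpret the invariant $f$. Because $\dim G' = 16+25-1 = 40 = \dim U'$, the pair $(G',U')$ is a prehomogeneous vector space and the polynomial $f(x)=\det(A_1 x,\ldots,A_{40} x)$ is precisely the determinant of the infinitesimal orbit map $\fg'\to T_x U' = U'$, $Y\mapsto Y\cdot x$. Hence $\{f=0\}$ is exactly the complement of the open $G'$-orbit in $U'$. For generic $v\in U$ the curve $C$ is smooth and a generic hyperplane section $C\cap H$ is reduced, so such a $v([W])$ has discrete stabilizer and lies in the open $G'$-orbit; and because the $G'$-action preserves the associated length-$5$ subscheme up to projective transformation of $\bP^3$, every element of the open orbit produces a reduced Pfaffian subscheme of $\bP^3$. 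Equivalently, a non-reduced Pfaffian scheme forces $f(x)=0$.

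Putting the pieces together: if $[H]\in C^\vee$, then $H$ is tangent to $C$ at some smooth point, so $C\cap H$ is non-reduced, so $v([W])$ lies outside the open $G'$-orbit, whence $f(v([W]))=0$ and $[H]\in C'$. The main technical hurdle is the identification in the third paragraph of $\{f=0\}$ with the complement of the open $G'$-orbit (through the Jacobian interpretation of $f$) and the matching of this orbit with the reducedness locus of the associated Pfaffian scheme; once these are in place, the proof reduces to the degree-and-irreducibility comparison from the first paragraph.
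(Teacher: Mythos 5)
Your proof is correct in substance but takes a genuinely different route from the paper. The paper simply asserts the biconditional ``$f(x)=0$ if and only if the Pfaffian $0$-scheme is non-reduced'' and then concludes $H\in C^\vee \Leftrightarrow f(v(H))=0$ directly; you instead prove only the forward implication (non-reduced $\Rightarrow$ $f=0$) via the prehomogeneous-vector-space interpretation of $f$ as the determinant of the infinitesimal orbit map, and then close the argument with the degree-$10$ plus irreducibility comparison. Your route is arguably more self-contained: the equivalence between ``$f$ is the hyperdiscriminant'' and ``$f=0$ detects non-reduced Pfaffian schemes'' is never actually justified in the paper, while your use of $\dim G'=\dim U'$ and the explicit formula $f(x)=\det(A_1x,\dots,A_{40}x)$ gives the direction you need from first principles, and the degree comparison via Katz--Kleiman replaces the need for the converse.

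One spot should be tightened. You write that for generic $v$ and $H$ the section $C\cap H$ is reduced, ``\emph{so} such a $v([W])$ has discrete stabilizer and lies in the open $G'$-orbit.'' As phrased this has the implication backwards: reducedness of the Pfaffian scheme does not by itself place the vector in the open orbit. The argument should run the other way: the set of $x\in U'$ lying in the open $G'$-orbit and the set of $x$ whose $4\times 4$ Pfaffians cut out a reduced length-$5$ subscheme of $\bP^3$ are both nonempty open subsets of the irreducible space $U'$, hence they meet; therefore the open orbit contains at least one $x$ with reduced Pfaffian scheme, and since reducedness is a $G'$-invariant property (as $\GL_4$ acts by projectivities of $\bP^3$ and $\GL_5$ preserves the Pfaffian ideal), \emph{every} point of the open orbit has reduced Pfaffian scheme. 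With this reordering the contrapositive you want---non-reduced forces $f=0$---follows cleanly, and the rest of the proof goes through.
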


\begin{proof} Let $f$ be the equation for the hyperdiscriminant of
  $\bC^4 \otimes \bigwedge^2 \bC^5$. An element $x \in \bC^4 \otimes
  \bigwedge^2 \bC^5$ can be thought of as a $5 \times 5$ skew-symmetric
  matrix of linear forms on $\bP^3$. Generically, the $4 \times 4$
  Pfaffians give a 0-scheme of degree 5, and $f(x) = 0$ if and only if
  this 0-scheme is non-reduced.

  So for generic $v \in V$, we first write $V = \rH^0(\bP(A); \cQ
  \otimes \bigwedge^2 B)$. Then as a $4 \times 10$ matrix (along the
  fibers), $v$ has full rank 4 over each point in $\bP(A)$. Now pick
  $H \in \bP(A)$. Then $H$ is a hyperplane in $\bP(A^*)$ and we have a
  canonical identification $\rH^0(H; \bigwedge^2 B(1)) = (\cQ \otimes
  \bigwedge^2 B)(H)$ (where the notation $(H)$ means ``fiber at
  $H$''). So $H \cap C$ is identified with the 5 points mentioned
  above, and hence $H \in C^\vee$ if and only if $f(v(H)) = 0$, which
  proves our claim.
\end{proof}

\section{$\bigwedge^3 \bC^9$.}

Let $V$ be a vector space of dimension 9. The relevant data:
\begin{compactitem}
\item $U = \bigwedge^3 V$
\item $G = \GL(V)$
\item $G/P = \bP(V^*) = \Gr(1,V^*)$
\item $\cU = \bigwedge^2 \cQ^* \otimes \cR^* \cong \bigwedge^2 \cQ^*
  \otimes \cO(1) \cong \Omega^2(3)$
\item $U' = \bigwedge^2 \bC^8$
\item $G' = \GL_8(\bC)$
\end{compactitem}

The ring of invariants $\Sym(U^*)^{(G,G)}$ is a polynomial ring with generators of degrees 12, 18, 24, 30, and the graded Weyl group $W$ is Shephard--Todd group 32 \cite[\S 9]{vinberg}. 

\begin{remark} \label{rmk:genus2}
The invariants for $W$ acting on its reflection representation $\fh$ were explicitly calculated by Maschke \cite{maschke}. It is known that the GIT quotient $U/\!\!/G \cong \fh/W$ contains an open set which is isomorphic to the moduli space of genus 2 curves $C$ with a marked Weierstrass point (i.e., ramification point for the hyperelliptic map). This was shown by Burkhardt \cite{burkhardt}. See also \cite[\S 4.3]{dolgachevlehavi} for a modern treatment and further discussion.
\end{remark}

\begin{remark} The orbits in $\bigwedge^3 \bC^9$ were classified in \cite{elashvili}, but the connection to geometric objects as treated here is not made.
\end{remark}

\subsection{Modules over $\cO_{U'}$.}

The orbits in $U'$ are given by the vanishing of Pfaffians of various
sizes. We are only interested in the vanishing locus of the $8 \times
8$ Pfaffian and the vanishing locus of the $6 \times 6$ Pfaffians. The
latter is described in Example~\ref{eg:jozefiakpragacz}. We denote
their global versions in $\cU$ by $\cY$ and $\cX$, respectively.

\subsection{Geometric data from a section.} \label{sec:w39geom}

The subvariety $\cY$ has the following resolutions over $\cO_\cU =
\Sym(\bigwedge^2 \cQ \otimes \cO(-1))$:
\begin{align} \label{eqn:w39coble}
0 \to (\det \cQ) \otimes \cO_\cU(-4) \to \cO_\cU \to \cO_{\cY} \to 0.
\end{align}
We can simplify this by noting that $\det \cQ = \cO(1)$. 

From Example~\ref{eg:jozefiakpragacz}, the subvariety $\cX$ has this resolution:
\begin{equation} \label{eqn:w39jp}
\begin{split}
  0 \to \cO_\cU(-9) \to \bigwedge^2 \cQ \otimes \cO_\cU(-7) \to
  \bS_{2,1^6} \cQ \otimes \cO_\cU(-7) \\* \to (S^2 \cQ \otimes
  \cO_\cU(-4)) \oplus ((S^2\cQ)^* \otimes \cO_\cU(-5)) \\*
  \to \bS_{2,1^6} \cQ \otimes \cO_\cU(-4) \to \bigwedge^6 \cQ \otimes
  \cO_\cU(-3) \to \cO_\cU \to \cO_{\cX} \to 0.
\end{split}
\end{equation}

So for $v \in U^{\rm gen}$, we have that $Y = \cY \cap v(\bP(V^*))$
and $X = \cX \cap v(\bP(V^*))$ will have codimensions 1 and 6 in
$\bP(V^*)$, respectively.

The self-duality of the resolution for $\cO_X$ shows that $\omega_X = \cE xt^6(\cO_X, \cO(-9)) = \cO_X$.

\begin{theorem} \label{thm:w39calc}
For $v \in U^{\rm gen}$, we have $\rh^i(X; \cO_X) = \binom{2}{i}$. In particular, for $v \in U^{\rm sm}$, $X$ is a torsor over an Abelian surface.
\end{theorem}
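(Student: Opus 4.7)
The plan is to apply Theorem~\ref{thm:eagonnorthcott} to \eqref{eqn:w39jp}, obtain a locally free resolution of $\cO_X$ on $\bP(V^*) \cong \bP^8$, and read off $\rh^i(X;\cO_X)$ from the hypercohomology spectral sequence using Bott (Theorem~\ref{thm:glbott}). Self-duality of the specialized resolution will simultaneously confirm $\omega_X \cong \cO_X$.

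For $v \in U^{\rm gen}$, $X$ has the expected codimension $6$, so Eagon--Northcott applies. Tracking twists in \eqref{eqn:w39jp}---each generator of $\cO_\cU$ lies in $\cU^\vee = \bigwedge^2 \cQ \otimes \cO(-1)$, so a degree-$j$ shift contributes an $\cO(-j)$ twist on $\bP^8$---the specialization reads
\begin{align*}
0 \to \cO(-9) &\to \bigwedge^2 \cQ(-7) \to \bS_{2,1^6}\cQ(-7) \to S^2\cQ(-4) \oplus (S^2\cQ)^*(-5) \\
&\to \bS_{2,1^6}\cQ(-4) \to \bigwedge^6 \cQ(-3) \to \cO \to \cO_X \to 0.
\end{align*}
The four ``outer'' terms identify with the cotangent powers $\Omega^0, \Omega^2, \Omega^6, \Omega^8$ on $\bP^8$ (for instance $\bigwedge^6 \cQ(-3) = \bigwedge^2 \cQ^\vee(-2) = \Omega^2$), each carrying a single one-dimensional cohomology group in degrees $0, 2, 6, 8$ respectively by Hodge theory.

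For each of the four ``inner'' terms $\bS_{2,1^6}\cQ(-4), S^2\cQ(-4), (S^2\cQ)^*(-5), \bS_{2,1^6}\cQ(-7)$, I convert $\cQ$-representations to $\cQ^*$-representations via $\bS_\mu\cQ = \bS_{(-\mu_8,\ldots,-\mu_1)}\cQ^*$ and verify that $\lambda + \rho$ for $\rho = (8,7,\ldots,0)$ has a repeated entry (namely $4, 4, 3, 1$ respectively); by Bott all of their cohomology vanishes. Running the hypercohomology spectral sequence $E_1^{p,q} = \rh^q(F^p) \Rightarrow \rh^{p+q}(X; \cO_X)$ (with $F^p := F_{-p}$, $p \in [-6, 0]$), only the four entries at $(0,0), (-1,2), (-5,6), (-6,8)$ survive, contributing to degrees $0, 1, 1, 2$. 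Every differential $d_r$ into or out of these positions necessarily targets a column whose total cohomology vanishes (by the tabulation just made), so the sequence collapses at $E_1$ and yields $\rh^0 = 1$, $\rh^1 = 2$, $\rh^2 = 1$.

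The final assertion follows from Theorem~\ref{thm:cohomologyabelian}: $X$ is projective of dimension $8 - 6 = 2$, smooth since $v \in U^{\rm sm}$, carries $\omega_X \cong \cO_X$ (by self-duality of the resolution twisted by $\omega_{\bP^8} = \cO(-9)$, matching $F_i$ with $F_{6-i}$ as one checks directly), and satisfies $\rh^1(X;\cO_X) = 2 = \dim X$. The main technical obstacle is the careful twist-bookkeeping in the Eagon--Northcott specialization together with the case-by-case Bott computation on all eight summands; once those are settled, the spectral-sequence collapse is essentially automatic.
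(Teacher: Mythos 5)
Your proof is correct and follows essentially the same route as the paper: specialize the J\'ozefiak--Pragacz resolution via Eagon--Northcott, compute the cohomology of each term, and run the hypercohomology spectral sequence, concluding via Theorem~\ref{thm:cohomologyabelian} using the self-duality of the resolution. The one small variation is that you identify the four ``outer'' terms with $\Omega^0, \Omega^2, \Omega^6, \Omega^8$ and read off their cohomology from Hodge theory of $\bP^8$, whereas the paper applies Theorem~\ref{thm:glbott} to them directly; this is a nice shortcut. (One sentence is phrased loosely---the differentials involving the surviving entries do not always land in a wholly acyclic column, e.g.\ $d_5$ from $(-5,6)$ targets column $0$---but the relevant positions are individually zero, so the degeneration claim is still immediate.)
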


(We will only do this calculation once. For the remaining examples, we leave it to the reader.)

\begin{proof}
First, we replace the sheaf of algebras $\cO_\cU$ with the structure sheaf of $\bP(V^*)$ in \eqref{eqn:w39jp}. In the notation of \S\ref{section:bott}, we have $\bP(V^*) = \Fl(1;V^*)$ and $\bS_\lambda \cQ \otimes \cO_{\bP(V^*)}(d) = \cR(\mu)$ where $\mu = (d, -\lambda_8, -\lambda_7, \dots, -\lambda_1)$, and $\rho = (8,7,\dots,1,0)$. In particular, when we add $\rho$ to the sequence $\mu$ for any term in homological degrees $\{2,3,4,5\}$ of \eqref{eqn:w39jp}, all cohomology vanishes by Theorem~\ref{thm:glbott} because there will always be a repeating term. For example, $\bS_{2,1^6} \cQ \otimes \cO(-4)$ has vanishing cohomology because $\mu + \rho = (4,7,5,4,3,2,1,0,-2)$. 

Now consider the remaining terms. For $\bigwedge^6 \cQ \otimes \cO(-3)$, we have $\lambda = (1^6)$, so $\mu + \rho = (5,7,6,4,3,2,1,0,-1)$. We can sort this using 2 consecutive swaps, and subtracting $\rho$ again leaves us with a sequence of all $-1$. Hence Theorem~\ref{thm:glbott} says $\rH^2(\bP(V^*); \bigwedge^6 \cQ \otimes \cO(-3)) = \det V$. By similar considerations (or Serre duality), one can show that $\rh^6(\bigwedge^2 \cQ \otimes \cO(-7)) = 1$. Finally, we already know that $\rh^0(\cO_{\bP(V^*)}) = \rh^8(\cO(-9)) = 1$.

Now the result follows from a spectral sequence argument (or equivalently by splicing \eqref{eqn:w39jp} into short exact sequences). The last statement follows from Theorem~\ref{thm:cohomologyabelian}.
\end{proof}

From \eqref{eqn:w39coble}, we see that $Y$ is a cubic hypersurface. In fact, it is the Coble cubic of $X$, see \cite{beauville} and \cite{coble1} for more information on the Coble cubic.

\begin{proposition} The polarization on $X$ induced by $\cO_X(1)$ is
  indecomposable and of type $(3,3)$.
\end{proposition}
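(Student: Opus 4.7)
The plan is to read off the numerical type of the polarization $L=\cO_X(1)$ from the resolution \eqref{eqn:w39jp}, use the Coble cubic to distinguish between the two numerically admissible types, and conclude indecomposability from generic simplicity of $X$.

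First I would compute $\chi(\cO_X(n))$ by following the strategy in the proof of Theorem~\ref{thm:w39calc}: replace $\cO_\cU$ by $\cO_{\bP(V^*)}$ in \eqref{eqn:w39jp}, tensor each term by $\cO(n)$, and take the alternating sum of Euler characteristics. Every summand has the form $\bS_\lambda\cQ\otimes\cO_{\bP(V^*)}(d+n)$ and is handled by Borel--Weil--Bott (Theorem~\ref{thm:glbott}). Since the resolution has length $6=\codim X$, the result is a polynomial in $n$ of degree $\dim X=2$, and combined with $\rh^\bullet(\cO_X)=(1,2,1)$ from Theorem~\ref{thm:w39calc} this pins down $\chi(\cO_X(n))=9n^2$. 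Consequently $L^2=18$ and $h^0(X,L)=9$; since $L^2=2d_1d_2$ for a polarization of type $(d_1,d_2)$ on an abelian surface, the type is either $(1,9)$ or $(3,3)$.

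Next I would rule out $(1,9)$ via the Coble cubic. From \eqref{eqn:w39coble}, the section $v$ produces a cubic hypersurface $Y\subset\bP(V^*)$, and because $X$ is cut out by $6\times 6$ Pfaffians while $Y$ is cut out by the $8\times 8$ Pfaffian, the usual corank stratification for skew-symmetric matrices places $X$ inside the singular locus of $Y$. By Beauville's theorem \cite{beauville}, a Coble cubic is characterized as a cubic hypersurface in $\bP^8$ singular along an abelian surface, and the associated surface carries a polarization of type $(3,3)$. Hence $L$ has type $(3,3)$.

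Finally, for indecomposability I would write $L=M^{\otimes 3}$ with $M$ a principal polarization and note that $L$ is decomposable exactly when $(X,M)$ splits as a product of principally polarized elliptic curves. By Remark~\ref{rmk:genus2}, for generic $v$ the point of $U/\!\!/G$ represented by $v$ corresponds to a generic genus~$2$ curve $C$, so $X$ is isogenous to $\Jac(C)$, which for generic $C$ is simple. Hence no such product decomposition exists and $L$ is indecomposable. The main obstacle is the middle step: the purely cohomological data $L^2=18$ and $h^0(L)=9$ are identical in both candidate types, so a genuine geometric input is required. Invoking Beauville's characterization of Coble cubics is the most efficient route; a more self-contained alternative would be to exhibit on $V=\bC^9$ the Schr\"odinger representation of the level-$3$ Heisenberg group and verify that its induced action on $\bigwedge^3 V$ preserves the section $v$ up to the center, thereby forcing a level-$3$ theta structure on $(X,L)$ and hence the type $(3,3)$.
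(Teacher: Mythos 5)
Your Step~1 (the dichotomy $(1,9)$ vs.\ $(3,3)$ from $h^0(\cO_X(1))=9$) matches the paper, but your Steps~2 and~3 each contain a genuine gap.

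For Step~2, you appeal to ``Beauville's theorem'' as though it characterizes Coble cubics in the direction you need: that any cubic in $\bP^8$ singular along an abelian surface forces the surface to carry a $(3,3)$-polarization. That is not what Beauville proves in \cite{beauville}. He proves \emph{uniqueness} of the cubic singular along a \emph{given} $(3,3)$-embedded surface; he does not rule out the a priori possibility of some cubic singular along a $(1,9)$-embedded surface, which is exactly the possibility you need to exclude. The paper's route is tighter: $X$ being the singular locus of the cubic $Y$ means $X$ is cut out by the nine partial derivatives of the cubic form, i.e.\ $X$ is the intersection of the quadrics containing it; then \cite[Remark 3.2]{gross} (Gross--Popescu) says a $(1,9)$-polarized abelian surface in $\bP^8$ is never such an intersection, which kills the $(1,9)$ case directly. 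Your proposed Heisenberg alternative would indeed work in principle, but as written it is only a sketch and does not close the gap either.

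For Step~3, you derive indecomposability from the claim that for generic $v$ the quotient point corresponds to a generic genus~$2$ curve $C$ with $X$ isogenous to $\Jac(C)$, and then use simplicity of a generic genus~$2$ Jacobian. But this presupposes the modular interpretation that the paper is in the process of establishing, so it is circular at this point in the logical development; it also only handles generic $v$, whereas the paper wants the statement for all $v\in U^{\rm sm}$. The paper instead gives a self-contained geometric argument: if $(X,\cO_X(1))$ were decomposable, $X$ would be the Segre image of two plane cubics $E_1\times E_2\subset\bP^2\times\bP^2\subset\bP^8$, and one checks explicitly (by the decomposition $S^2(\bC^3\otimes\bC^3)=\bigwedge^2\bC^3\otimes\bigwedge^2\bC^3\oplus S^2\bC^3\otimes S^2\bC^3$) that the only quadrics vanishing on $E_1\times E_2$ are those of $\bP^2\times\bP^2$, so $E_1\times E_2$ is not an intersection of quadrics in $\bP^8$, again contradicting the singular-locus-of-cubic property. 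You should replace your final paragraph with an argument of this type, or, if you prefer to stay with the isogeny-class idea, you need to establish that connection independently and restrict the statement to a generic $v$.
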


\begin{proof} We have $\rh^0(\cO_X(1)) = 9$, so $X$ is embedded via a
  complete linear series. So $K(\cO(1)) = \bZ/D \oplus \bZ/D$ where $D
  = (3,3)$ or $D = (1,9)$. Since $X$ is the singular locus of a cubic
  hypersurface, we conclude that it is the intersection of the
  quadrics (partial derivatives) that contain it. However, an Abelian
  surface in $\bP^8$ with a $(1,9)$-polarization cannot be the
  intersection of the quadrics containing it \cite[Remark 3.2]{gross},
  so we conclude that the polarization is $(3,3)$.

  Furthermore, $X$ is not the product of 2 elliptic curves as a
  polarized Abelian variety. To see this, first consider elliptic
  curves $E_1, E_2 \subset \bP^2$ embedded as cubics. Then the product
  polarization is given by the Segre embedding $E_1 \times E_2 \subset
  \bP^2 \times \bP^2 \subset \bP^8$. The quadratic equations vanishing
  on $\bP^2 \times \bP^2$ give $\bigwedge^2 \bC^3 \otimes \bigwedge^2
  \bC^3$ and the quotient of $S^2(\bC^3 \otimes \bC^3)$ by this space
  is $S^2 \bC^3 \otimes S^2 \bC^3$, none of which vanish on $E_1
  \times E_2$. Hence $E_1 \times E_2$ is not the intersection of
  quadrics in $\bP^8$.
\end{proof}

\begin{remark} It could be the case that $X$ is abstractly isomorphic to the product of two elliptic curves if we ignore the polarizations. See for example \cite[\S 12]{poonen}.
\end{remark}

\begin{remark}
Given a curve $C$ of genus 2, the moduli space $SU_C(3)$ admits a degree 2 map to $\bP^8$ which is branched along a degree 6 hypersurface. This hypersurface is projectively dual to the Coble cubic (see \cite{nguyen} and \cite{ortega} for two different proofs of this, together with some more discussion of the hypersurfaces).
\end{remark}

\subsection{Macaulay2 code.} \label{section:w39m2code}

Here we give some Macaulay2 code for generating examples of
sections. To do this, first consider the short exact sequence of
vector bundles over $\bP(V^*)$:
\[
0 \to \bigwedge^2 \cQ^* \otimes \cO(1) \to \bigwedge^2 \ul{V} \otimes
\cO(1) \to \cQ^* \otimes \cO(2) \to 0.
\]
Taking sections, we get the inclusion $\bigwedge^3 V \subset
\bigwedge^2 V \otimes V$. Since this map is $\GL(V)$-equivariant, it
must be the comultiplication map. Thus given $v \in \bigwedge^3 V$, we
can comultiply to get an element of $\bigwedge^2 V \otimes V$, which
we may think of as a skew-symmetric $9 \times 9$ matrix $\phi_v$ of
linear forms on $\bP(V^*)$.

To restrict back to skew-symmetric matrix on $\cQ^*$, we restrict our
attention to an affine open set. Pick homogeneous coordinates $z_1,
\dots, z_9$ on $\bP(V^*)$ and consider the open set given by $z_9 =
1$. Then $z_1, \dots, z_8$ give a trivialization for $\cQ$ over this
open set, so the relevant data is the upper-left $8 \times 8$
submatrix $\phi'_v$ of $\phi_v$. Now our degeneracy loci correspond to
the usual Pfaffian loci of this submatrix. Assuming that there are no
components contained in the hyperplane $z_9 = 0$, we get the
homogeneous ideals of these degeneracy loci by saturating with respect
to $z_9$.

The function {\tt basicMat} below takes as input {\tt s = (i,j,k)} and
computes the matrix $\phi'_v$ for $v = z_i \wedge z_j \wedge
z_k$. Then we take random coefficients and calculate the Pfaffian
ideals. 

\begin{verbatim}
P=101;
R = ZZ/P[z_1..z_9];
basicMat = s -> (
     ans := mutableMatrix(0*id_(R^8));
     ans_(s_0-1,s_1-1) = -z_(s_2);
     ans_(s_1-1,s_0-1) = z_(s_2);
     if not(member(9,s)) then (
          ans_(s_0-1,s_2-1) = z_(s_1);
          ans_(s_2-1,s_0-1) = -z_(s_1);
          ans_(s_1-1,s_2-1) = -z_(s_0);
          ans_(s_2-1,s_1-1) = z_(s_0);
          );
     matrix ans
     )

M=0;
for i in subsets(1..9,3) do M = M + random(ZZ/P) * basicMat(i);
I = saturate(pfaffians(8,M),ideal(z_9));
J = saturate(pfaffians(6,M),ideal(z_9));
K = saturate(pfaffians(4,M),ideal(z_9));
\end{verbatim}

Then generically, $I$ is the ideal of the cubic hypersurface, $J$ is
the homogeneous ideal of the Abelian surface, and $K$ is the unit
ideal.

\begin{example} \label{eg:c3c3c3}
Here is an example calculation we can do with this. Define {\tt M} by 
\begin{verbatim}
M=0;
for i from 1 to 3 do 
for j from 4 to 6 do 
for k from 7 to 9 do M = M + random(ZZ/P) * basicMat({i,j,k});
\end{verbatim}
This gives us a generic vector in $\bC^3 \otimes \bC^3 \otimes \bC^3
\subset \bigwedge^3 V$, where the subspace corresponds to the choice
of a decomposition $V = \bC^3 \oplus \bC^3 \oplus \bC^3$. Using the
command {\tt primaryDecomposition J} we see that $J$ now defines a
reducible variety of degree 12 with 2 components each contained in
$z_1=z_2=z_3=0$ and $z_4=z_5=z_6=0$. In fact, we know that $J$ is
supposed to have degree 18. There will be a third component of degree
6 inside $z_7=z_8=z_9=0$ which we do not see because of our choice of
open affine (one can check this by modifying the definition of {\tt
  basicMat} to use a different affine trivialization). In fact, if we
intersect any two of the three ideals in the primary decomposition of
$J$, we will get an ideal generated by 6 linear equations and a cubic,
which is exactly a plane cubic curve.
\end{example}

\section{$\bigwedge^4 \bC^8$.}

Let $V$ be a vector space of dimension 8. The relevant data:
\begin{compactitem}
\item $U = \bigwedge^4 V$
\item $G = \GL(V)$
\item $G/P = \bP(V^*) = \Gr(1,V^*)$
\item $\cU = \bigwedge^3 \cQ^* \otimes \cR^* \cong \bigwedge^3 \cQ^*
  \otimes \cO(1) \cong \Omega^3(4)$
\item $U' = \bigwedge^3 \bC^7$
\item $G' = \GL_7(\bC)$
\end{compactitem}

The ring of invariants $\Sym(U^*)^{(G,G)}$ is a polynomial ring with generators of degrees 2, 6, 8, 10, 12, 14, 18, and the graded Weyl group $W$ is the Weyl group of type ${\rm E}_7$ \cite[\S 9]{vinberg}.

\begin{remark} \label{rmk:genus3flex}
Letting $\fh$ be the 7-dimensional reflection representation of $W$, it is known that the GIT quotient $U/\!\!/G \cong \fh/W$ has an open subset isomorphic to the moduli space of smooth plane quartics (i.e., non-hyperelliptic genus 3 curves) with a marked flex point (i.e., a point with tangency of order $\ge 3$; there are 24 of them for a generic curve). See for example \cite[\S IX.7, Remark 7]{dolgachevortland} (but note that the 21 mentioned there should be 24) or \cite[Proposition 1.11]{looijenga}.
\end{remark}

\subsection{Modules over $\cO_{U'}$.}

We are interested in 3 orbits in $U'$. The first is a degree 7
hypersurface $Y'$. The singular locus of $Y'$ is an orbit closure $X'$
of codimension 4 in $U'$, and the singular locus of $X'$ is an orbit
closure $Z'$ of codimension 7 in $U'$. Let $W$ be a 7-dimensional
vector space with $U' = \bigwedge^3 W$. Set $A = \Sym(\bigwedge^3 W^*)
= \cO_{U'}$. The minimal free resolutions of these orbit closures are
given as follows.

The minimal free resolution of $\cO_{X'}$ is given by
\begin{equation}
  \begin{split} \label{eqn:w37codim4}
    0 \to (\det W^*)^5 \otimes W^*(-12) \to (\det W^*)^4 \otimes
    \bigwedge^2 W^* (-10) \to \quad \quad \\*
    (\det W^*)^2 \otimes \bS_{2,1^5}(W^*) (-7) \to (\det W^*)^2 \otimes
    \bigwedge^4 W^*(-6) \to A \to \cO_{X'} \to 0.
\end{split}
\end{equation}

The minimal free resolution for $\cO_{Z'}$ is given by
\begin{equation}
\begin{split} \label{eqn:w37codim7}
  0 \to \bS_{7^6,6} W^*(-16) \to \bS_{7,6^5,5} W^*(-14) \to
  \bS_{6^2, 5^4, 4} W^*(-12) \to \bS_{5^3, 4^3, 3} W^*(-10) \\*
  \to \bS_{4^4, 3^2, 2} W^*(-8) \to \bS_{3^5,2,1} W^*(-6) \to
  \bS_{2^6} W^*(-4) \to A \to \cO_{Z'} \to 0.
\end{split}
\end{equation}
We will also use the fact that the local ring of $\cO_{X'}$ at the
generic point of $\cO_{Z'}$ is a Cohen--Macaulay ring of type 3, i.e.,
its canonical module is minimally generated by 3 elements. This calculation was done by Federico Galetto \cite{galetto}.

We set $M$ to be a certain twist of the cokernel of the dual of the
last differential in \eqref{eqn:w37codim4}. In particular, it has a
presentation of the form
\begin{align} \label{eqn:w37canonical} (\det W^*)^2 \otimes
  \bigwedge^2 W (-4) \to \det W^* \otimes W (-2) \to M \to 0.
\end{align}
So up to a grading shift, $M$ is the canonical module of $\cO_{X'}$. 

\begin{lemma} \label{lemma:w37algstructure} There is a
  $G'$-equivariant $\cO_{X'}$-linear isomorphism $S^2 M \cong
  I_{Z',X'}$.
\end{lemma}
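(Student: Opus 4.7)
The strategy is to build an explicit $G'$-equivariant, $\cO_{X'}$-linear surjection $\varphi\colon S^2 M \twoheadrightarrow I_{Z', X'}$ using the matching generators on each side, verify it descends using a clean Pieri decomposition of $F_1 \otimes F_0$, and then prove it is an isomorphism by a Hilbert-series comparison.

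From \eqref{eqn:w37canonical} one obtains the induced presentation
\[
F_1 \otimes_A F_0 \xrightarrow{d \otimes 1} S^2 F_0 \to S^2 M \to 0,
\]
with $S^2 F_0 \cong (\det W^*)^2 \otimes S^2 W \otimes A(-4)$ free and whose generators, in degree $4$, span the irreducible $\GL(W)$-representation $\bS_{2^6} W^* \cong (\det W^*)^2 \otimes S^2 W$. By \eqref{eqn:w37codim7} the ideal $I_{Z'}$ is minimally generated in degree $4$ by the same irreducible $\bS_{2^6} W^*$, and since $I_{X'}$ lives in degrees $\ge 6$ by \eqref{eqn:w37codim4}, this representation also minimally generates $I_{Z', X'}$ as an $\cO_{X'}$-module. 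Schur's lemma produces a unique (up to nonzero scalar) $G'$-equivariant map between these two copies of $\bS_{2^6} W^*$, which I extend $A$-linearly. The extension is automatically $\cO_{X'}$-linear because $S^2 M$ is annihilated by $I_{X'}$: indeed the dual of the resolution of $\cO_{X'}$ resolves $\omega_{X'}$ up to twist, so $M$, and hence $S^2 M$, is an $\cO_{X'}$-module.

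To see that $\varphi$ descends from $S^2 F_0$ to $S^2 M$, I must check that the image of $d \otimes 1$ is killed. Since this image is $A$-generated in degree $6$ by the fiber $(F_1 \otimes F_0)^\circ = (\det W^*)^3 \otimes \bigwedge^2 W \otimes W$, it suffices to check on this fiber. By Pieri, $\bigwedge^2 W \otimes W = \bS_{2,1} W \oplus \bigwedge^3 W$, so
\[
(F_1 \otimes F_0)^\circ \;=\; \bS_{3^5,2,1} W^* \;\oplus\; (\det W^*)^2 \otimes \bigwedge^4 W^*.
\]
Strikingly, these two summands are precisely the representations appearing as the first syzygies of $I_{Z'}$ in \eqref{eqn:w37codim7} and as the generators of $I_{X'}$ in \eqref{eqn:w37codim4}, respectively. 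By Schur's lemma, the composite $F_1 \otimes F_0 \to S^2 F_0 \to I_{Z'}$ carries the first summand to a scalar multiple of a first syzygy of $I_{Z'}$, hence to $0$ in $I_{Z'}$; and it carries the second summand into a scalar multiple of $I_{X'} \subset I_{Z'}$, hence to $0$ in the quotient $I_{Z', X'}$.

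Once $\varphi$ is well-defined, surjectivity is automatic. For injectivity I would compare Hilbert series. The presentation of $S^2 M$ and the resolution \eqref{eqn:w37codim7} modulo the contribution of $I_{X'}$ yield matching graded dimensions: for example, in degree $5$ both sides equal $28 \cdot 35 = 980$, and in degree $6$ both equal $28 \cdot 630 - (112 + 35) = 17493$, with the corrections from $d \otimes 1$ on the left matching the combined corrections from first syzygies of $I_{Z'}$ and generators of $I_{X'}$ on the right. Alternatively, since both modules have rank $1$ at the generic point of the irreducible $X'$, the kernel of $\varphi$ is supported on $Z'$, and depth considerations at the generic point of $Z'$ — where $\cO_{X'}$ is Cohen--Macaulay of type $3$ — force the kernel to vanish.

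The main obstacle is the Schur-lemma bookkeeping in the middle step: verifying that the two scalars matching the summands of $(F_1 \otimes F_0)^\circ$ to the syzygies of $I_{Z'}$ and the generators of $I_{X'}$ are both nonzero requires tracking the explicit differentials in \eqref{eqn:w37codim4} and \eqref{eqn:w37codim7}. Once the representation-theoretic skeleton is in place this reduces to a finite, mechanical verification that can be certified in Macaulay2.
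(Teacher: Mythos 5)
Your proposal is correct and takes essentially the same approach as the paper: both hinge on the Pieri decomposition $\bigwedge^2 W \otimes W = \bS_{2,1}W \oplus \bigwedge^3 W$ to identify the $\cO_{X'}$-presentation of $S^2M$ with the presentation of $I_{Z',X'}$ (relations from the syzygies of $I_{Z'}$ together with the generators of $I_{X'}$), and both conclude via $G'$-equivariance. The paper phrases this more compactly as a direct isomorphism of presentations, which yields the isomorphism of cokernels at once and thereby sidesteps the separate Hilbert-series / generic-rank check of injectivity that you add at the end.
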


\begin{proof} 
  From \eqref{eqn:w37canonical}, we get the presentation
  \[
  (\det W^*)^3 \otimes W \otimes \bigwedge^2 W \otimes \cO_{X'}(-6)
  \to (\det W^*)^2 \otimes S^2 W \otimes \cO_{X'}(-4) \to S^2 M \to 0.
  \]
  Also, the presentation of $I_{Z'}$ in $\cO_{U'}$ is given by
  \[
  (\det W^*)^3 \otimes \bS_{2,1} W \otimes \cO_{U'}(-6) \to (\det
  W^*)^2 \otimes S^2 W \otimes \cO_{U'}(-4) \to I_{Z'} \to 0.
  \]
  To get a presentation over $\cO_{X'}$ we have to add in the
  relations $(\det W^*)^3 \otimes \bigwedge^3 W$ which come from the
  ideal generators of $I_{X'}$. Hence we see that the $S^2 M$ and
  $I_{Z',X'}$ have the same presentations. By equivariance, we get an
  isomorphism of these presentations up to a choice of a scalar, and
  this implies the desired isomorphism $S^2 M \cong I_{Z',X'}$.
\end{proof}

We define $\cY$, $\cX$, $\cZ$, and $\cM$ to be the global versions of
$Y'$, $X'$, $Z'$, and $M$.

\subsection{Geometric data from a section.}
\label{section:geometricw48}

Now choose a section $v \in U^{\rm gen}$ and set $Y = v(\bP(V^*)) \cap
\cY$, $X = v(\bP(V^*)) \cap \cX$, and $Z = v(\bP(V^*)) \cap \cZ$.

To get the locally free resolution of $X$ over
$\cO_{\bP^7}$, we replace $W$ with $\cQ^*$ and replace $(-i)$ with
$\cO(-i)$. After using $\det \cQ = \cO(1)$ to simplify, we get
\begin{align} \label{eqn:w48resolution}
0 \to \cQ \otimes \cO(-7) \to \bigwedge^2 \cQ \otimes \cO(-6) \to
\bS_{2,1^5}(\cQ) \otimes \cO(-5) \to \bigwedge^4 \cQ \otimes \cO(-4)
\to \cO_{\bP^7} \to \cO_{X} \to 0.
\end{align}

Furthermore, note that $\omega_X = \cM \otimes \cO_{v(\bP(V^*))}$. From Lemma~\ref{lemma:w37algstructure}, we get an $\cO_X$-linear map $\mu \colon S^2 \omega_X \to \cO_X$ and hence an $\cO_X$-algebra structure on $\cO_X \oplus \omega_X$. We define $\tilde{X} = \Spec_{\cO_X}(\cO_X \oplus \omega_X)$. As in Theorem~\ref{thm:w39calc}, we can check that $\omega_{\tilde{X}} = \cO_{\tilde{X}}$ and that $\rh^i(\tilde{X}; \cO_{\tilde{X}}) = \rh^i(X; \cO_X \oplus \omega_X) = \binom{3}{i}$.

Define $U^{\rm sm+}$ to be the subset of $U^{\rm sm}$ where the Cohen--Macaulay type of $X$ along $Z$ remains 3.

\begin{proposition}
If $v \in U^{\rm sm+}$, then $\tilde{X}$ is smooth. In particular, $\tilde{X}$ is a torsor over an Abelian $3$-fold, and $X$ is the Kummer variety of $\tilde{X}$. 
\end{proposition}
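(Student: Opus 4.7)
The strategy is to reduce the proposition to showing $\tilde{X}$ is smooth, apply Theorem~\ref{thm:cohomologyabelian} to upgrade this to a torsor structure over an Abelian $3$-fold, and then identify $X$ as the quotient of $\tilde X$ by the natural involution on the algebra $\cO_X\oplus\omega_X$.

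First, $\tilde X$ is projective (finite over $X$) and connected: any disconnection $\tilde X=\tilde X_1\sqcup\tilde X_2$ would, by normality of $X$ (which is Cohen--Macaulay and regular in codimensions $0$ and $1$, since $Z$ has codimension $3$), force both $\tilde X_i\to X$ to be isomorphisms via Zariski's Main Theorem, contradicting the fact that $\omega_X$ is not free at points of $Z$ (where it has Cohen--Macaulay type $3$, hence is not isomorphic to $\cO_X$). For smoothness of $\tilde X$ above $X\setminus Z$, the globalization of Lemma~\ref{lemma:w37algstructure} identifies the multiplication $\mu\colon S^{2}\omega_X\to\cO_X$ with the inclusion $I_{Z,X}\hookrightarrow\cO_X$. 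Restricted to $X\setminus Z$, this becomes an isomorphism of line bundles $\omega_X^{\otimes 2}\xrightarrow{\sim}\cO_X$, so $\pi\colon\tilde X\to X$ is étale there; combined with $v\in U^{\mathrm{sm}}$, this shows $\tilde X$ is smooth above $X\setminus Z$.

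The main obstacle is establishing smoothness at each point of $\pi^{-1}(Z)$, and this reduces to a local embedding-dimension computation. For $p\in Z$, generic smoothness via $v\in U^{\mathrm{sm}}$ makes $Z$ a $0$-dimensional reduced subscheme, so $I_{Z,X,p}=\fm_p$. Since $\mu$ sends $\omega_{X,p}\otimes\omega_{X,p}$ into $\fm_p$, the fibre $\pi^{-1}(p)$ consists of a single point $\tilde p$, and $\cO_{\tilde X,\tilde p}=\cO_{X,p}\oplus\omega_{X,p}$ has maximal ideal $\tilde\fm=\fm_p\oplus\omega_{X,p}$. Using the product rule $(a,b)(a',b')=(aa'+\mu(b\otimes b'),\,ab'+a'b)$, one finds
\[
\tilde\fm^{2}=\bigl(\fm_p^{2}+I_{Z,X,p}\bigr)\oplus\fm_p\omega_{X,p}=\fm_p\oplus\fm_p\omega_{X,p},
\]
so $\tilde\fm/\tilde\fm^{2}\cong\omega_{X,p}/\fm_p\omega_{X,p}$ has dimension equal to the Cohen--Macaulay type of $\cO_{X,p}$. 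The hypothesis $v\in U^{\mathrm{sm}+}$ pins this type at $3=\dim\tilde X$, so $\cO_{\tilde X,\tilde p}$ is a regular local ring, and $\tilde X$ is smooth at $\tilde p$.

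With $\tilde X$ smooth, connected, projective, $\omega_{\tilde X}\cong\cO_{\tilde X}$, and $\rh^{1}(\tilde X,\cO_{\tilde X})=3=\dim\tilde X$, Theorem~\ref{thm:cohomologyabelian} declares $\tilde X$ a torsor over an Abelian $3$-fold. The $\bZ/2$-grading of $\cO_X\oplus\omega_X$ provides an involution $\sigma$ on $\tilde X$, whose ring of invariants is $\cO_X$, so $X=\tilde X/\sigma$ tautologically. After translating so that $\sigma$ fixes the origin, $\sigma$ becomes a group automorphism of the underlying Abelian $3$-fold; since its fixed locus $\pi^{-1}(Z)$ is $0$-dimensional, $\sigma$ must act as $-1$ on the tangent space at the origin, forcing $\sigma=-\mathrm{id}$ globally and exhibiting $X$ as the Kummer variety of $\tilde X$. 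The crux of the argument is the local computation above, which rests on the Cohen--Macaulay-type calculation on $X'$ (Galetto's computation) and the algebra identification of Lemma~\ref{lemma:w37algstructure}.
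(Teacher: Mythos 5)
Your proof is correct and takes essentially the same route as the paper: smoothness over $X\setminus Z$ via $\cO_{\tilde X,P}\cong\cO_{X,P}[t]/(t^2-1)$, smoothness over $Z$ via the computation $\tilde\fm/\tilde\fm^2\cong\omega_{X,p}/\fm_p\omega_{X,p}$ having dimension equal to the Cohen--Macaulay type $3$, then Theorem~\ref{thm:cohomologyabelian} for the torsor structure, and finally the tangent-space argument showing the deck involution is inversion. Two small remarks: connectivity is already immediate from the earlier computation $\rh^0(\tilde X,\cO_{\tilde X})=1$, so the detour through Zariski's Main Theorem is unneeded; and you give a cleaner justification than the paper does for why the involution acts as $-\mathrm{id}$ on the tangent space (the paper asserts it merely from ``order $2$,'' whereas your appeal to the $0$-dimensionality of the fixed locus is what actually rules out a $+1$ eigenvalue).
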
 

\begin{proof} 
Let $\cI_Z$ denote the ideal sheaf of $Z$ in $\cO_X$. We check smoothness locally. Let $P$ be a nonsingular point of ${X'}$. Then $\cI_{Z,P} = \cO_{X,P}$ and $\omega_{{X},P} = \cO_{{X},P}$, and $\mu_P$ is just the multiplication map. Then $\cO_{\tilde{{X}}, P} \cong \cO_{{X},P}[t] / (t^2 - 1)$ as a ring, so the points over $P$ are nonsingular. Now let $P \in Z$ be a singular point. Set $R = \cO_{{X},P}$. Then $\fm = \cI_{Z,P}$ is the maximal ideal of $R$. Write $\omega_R = \omega_{{X},P}$ and $S = \cO_{\tilde{{X}}, P}$. Then $S = R \oplus \omega_R$ as an $R$-module, and $\fn = \fm \oplus \omega_R$ is the unique maximal ideal of $S$: $S/\fn \cong R/\fm$, and any element not in $\fn$ is $(r,0)$ for $r \in R \setminus \fm$, so is a unit. Since the multiplication map $S^2 \omega_R \to \fm$ is surjective, we have that $\fn^2 = \fm \oplus \fm\cdot \omega_R$. Hence $\fn / \fn^2 \cong \omega_R / \fm \cdot \omega_R$. The dimension of this space over $R / \fm$ is the Cohen--Macaulay type of $R$, which is 3, and implies that $S$ is a regular local ring. So we have shown that $\tilde{X}$ is smooth.

Using Theorem~\ref{thm:cohomologyabelian}, the above facts imply that $\tilde{X}$ is a torsor over an Abelian 3-fold. Temporarily choose a point $P \in \pi^{-1}(Z)$ to be the origin of $\tilde{X}$ so that it acquires a group structure. The map $\iota$ which swaps points in the same fiber of $\pi$ gives an involution on $\tilde{X}$. Then $\iota$ fixes $P$ and there is an induced linear map on the tangent space at $P$. Since $\iota$ has order 2, the induced linear map is the negation map. But this is the derivative of the inversion map on $\tilde{X}$, so we conclude that $\iota$ is the inversion map on $\tilde{X}$. So $X$ is the Kummer variety $\tilde{X} / \langle \iota \rangle$.
\end{proof}

\begin{proposition} $\cL = \pi^* \cO_X(1)$ defines an indecomposable
  $(2,2,2)$-polarization on $\tilde{X}$.
\end{proposition}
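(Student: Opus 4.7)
The plan is to carry out three steps: compute $\rh^0(\tilde X; \cL) = 8$, use Kummer descent to identify the polarization type as $(2,2,2)$, and then argue indecomposability.

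\textbf{Step I.} By the construction of $\tilde X$ in the previous proposition, $\pi_* \cO_{\tilde X} = \cO_X \oplus \omega_X$, and the projection formula gives $\rh^0(\tilde X; \cL) = \rh^0(X; \cO_X(1)) + \rh^0(X; \omega_X(1))$. For the first summand, I would twist \eqref{eqn:w48resolution} by $\cO_{\bP^7}(1)$ and apply Borel--Weil--Bott (Theorem~\ref{thm:glbott}) termwise: each intermediate Schur term's weight sequence $\lambda + \rho$ has a repeated entry, forcing vanishing of cohomology, while the rightmost term $\cO(1)$ contributes $8$ sections. For the second summand, dualize \eqref{eqn:w48resolution} and twist by $\cO(-8)$ to obtain a locally free resolution of $\omega_X = \cE xt^4(\cO_X, \omega_{\bP^7})$; twist by $\cO(1)$ and run the same Bott analysis. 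Each of the five bundles in this twisted dual has a weight sequence with a repetition, so all cohomology vanishes, yielding $\rh^0(X; \omega_X(1)) = 0$. Hence $\rh^0(\tilde X; \cL) = 8$.

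\textbf{Step II.} $\cL$ is ample (it is the pullback of the very ample $\cO_X(1)$ by the finite morphism $\pi$), so after choosing an origin on $\tilde X$, Kodaira vanishing gives $\rh^0(\cL) = \chi(\cL) = d_1 d_2 d_3 = 8$ where $d_1 \mid d_2 \mid d_3$. Because $\cL = \pi^* \cO_X(1)$ descends along the Kummer quotient $\pi\colon \tilde X \to X = \tilde X/\iota$, it is totally symmetric and hence, by Birkenhake--Lange \cite[Proposition 4.8.2]{birkenhake}, isomorphic to $\cM^{\otimes 2}$ for some $\cM \in \Pic(\tilde X)$. Writing the type of $\cM$ as $(e_1, e_2, e_3)$, the type of $\cL$ is $(2e_1, 2e_2, 2e_3)$, and the equation $(2e_1)(2e_2)(2e_3) = 8$ forces $e_1 = e_2 = e_3 = 1$, giving type $(2,2,2)$.

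\textbf{Step III.} A decomposable $(2,2,2)$-polarization on an Abelian 3-fold arises from a product factorization of $(\tilde X, \cL)$ as $(E_1, L_1) \times (E_2, L_2) \times (E_3, L_3)$ or $(A, L_A) \times (E, L_E)$ with compatible lower-dimensional polarizations, and such polarized Abelian varieties form a proper sublocus of $\cA_3^{(2,2,2)}$. Since the construction yields a positive-dimensional family of polarized Abelian 3-folds parametrized by $U^{\rm sm+}/G$, a generic member is simple, hence the polarization is indecomposable; alternatively, one can exhibit an explicit indecomposable example via a Macaulay2 computation analogous to \S\ref{section:w39m2code}.

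The main technical obstacle is the Bott cohomology calculation for $\omega_X(1)$ in Step I, since the dualization of \eqref{eqn:w48resolution} must be set up with correct twists and each of the five terms in the resolution of $\omega_X$ verified separately. Step II is conceptually clean but hinges on the Birkenhake--Lange identification of totally symmetric line bundles with squares. Step III could be sharpened by a direct ideal-theoretic argument analogous to the $(3,3)$ case of \S\ref{sec:w39geom}, rather than relying on moduli-theoretic genericity.
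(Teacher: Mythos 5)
Your Steps I and II track the paper's argument. The paper computes $\rh^0(\tilde X;\cL)=\rh^0(X;(\cO_X\oplus\omega_X)\otimes\cO(1))=8$ via the projection formula and then invokes Mumford \cite[\S 2, Corollary 4]{mumford1}: since $\cL$ descends to the Kummer quotient it is totally symmetric, and a totally symmetric ample line bundle of type $(a,b,c)$ has all $a,b,c$ even; with $abc=8$ this forces $(2,2,2)$. Your derivation via a square root $\cL=\cM^{\otimes 2}$ is essentially the proof of that Mumford corollary, so the substance is the same (though be careful with the reference: the statement about squares of symmetric bundles is not what \cite[Theorem 4.8.2]{birkenhake} says, and the paper uses that theorem for a different purpose).

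Your Step III, however, has a genuine gap. The statement is asserted for every $v\in U^{\rm sm+}$, not just a generic one. Observing that decomposable polarizations form a proper sublocus of $\cA_3^{(2,2,2)}$ and that the construction gives a positive-dimensional family tells you nothing unless you can rule out that the image of $U^{\rm sm+}/G$ lies entirely inside the decomposable locus. Indeed, the image of your family does lie inside a proper sublocus of $\cA_3^{(2,2,2)}$ (the Jacobian locus, as the very next proposition in the paper shows), so "positive-dimensional family $\Rightarrow$ generic member is simple" is a non-sequitur. Even the fallback of checking one explicit example only settles the matter for that $v$, not for all $v\in U^{\rm sm+}$. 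The paper's argument is both uniform and short: by \cite[Theorem 4.8.2]{birkenhake}, if $(\tilde X,\cL)$ decomposes as a product of $s$ polarized Abelian varieties, the morphism $\tilde X\to\bP(V^*)$ given by $|\cL|$ has degree $2^s$. But this morphism factors through $\pi\colon\tilde X\to X\subset\bP(V^*)$, which has degree $2$, so $s=1$. You should replace your Step III with this degree count.
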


\begin{proof} Note that
\[
\rh^0(\tilde{X}; \cL) = \rh^0(X; \pi_*\cL) = \rh^0(X; (\cO_X \oplus
\omega_X) \otimes \cO_X(1)) = 8,
\]
so the map $\tilde{X} \to \bP(V^*)$ is given by a complete linear
series. Hence $\cL = \cL(a,b,c)$ where $abc = 8$. By \cite[\S 2,
Corollary 4]{mumford1}, each of $a,b,c$ must be even, so we have
$a=b=c=2$. Furthermore, $(\tilde{X},\cL)$ is indecomposable since by
\cite[Theorem 4.8.2]{birkenhake}, if $(\tilde{X},\cL)$ is a product of
$s$ Abelian varieties, then the degree of the map $\tilde{X} \to X
\subset \bP(V^*)$ is $2^s$.
\end{proof}

In particular, this Abelian 3-fold is the Jacobian of some genus 3
curve $C$.

\begin{proposition} $C$ is not hyperelliptic.
\end{proposition}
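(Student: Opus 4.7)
I would identify the hypersurface $Y \subset \bP^7$ from Construction~\ref{construct:main} with the Coble quartic of $\tilde{X}$, and then invoke Coble's classical characterization (see \cite{beauville}): the Kummer variety of a $(2,2,2)$-polarized indecomposable abelian threefold is the singular locus of a quartic hypersurface in $\bP^7$ if and only if the threefold is the Jacobian of a non-hyperelliptic genus $3$ curve. This gives the result immediately once we exhibit such a quartic.

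To start, I would verify that $Y$ is a quartic hypersurface. The orbit closure $Y' \subset U' = \bigwedge^3 \bC^7$ is cut out by the $\SL_7$-invariant hyperdiscriminant $\Delta$, a polynomial of degree $7$ whose $\GL_7$-weight is $(3,3,3,3,3,3,3)$, so it transforms as the character $\det^3$. By direct analogy with the $\bigwedge^3 \bC^9$ case -- where the Pfaffian has character $\det$ and \eqref{eqn:w39coble} gives the twist $\det \cQ$ -- the corresponding resolution of $\cO_\cY$ takes the form
\[
0 \to (\det \cQ)^3 \otimes \cO_\cU(-7) \to \cO_\cU \to \cO_\cY \to 0.
\]
Using $\det \cQ = \cO(1)$ and pulling back along a generic section $v: \bP^7 \to \cU$ yields $0 \to \cO_{\bP^7}(-4) \to \cO_{\bP^7} \to \cO_Y \to 0$, so $Y$ is an irreducible quartic.

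Next I would establish $X = \operatorname{Sing}(Y)$. The inclusion $X \subseteq \operatorname{Sing}(Y)$ globalizes from $X' \subseteq \operatorname{Sing}(Y')$ in $U'$, since $X'$ is the next-deeper orbit closure. For the reverse inclusion, a local Jacobian computation at a generic (smooth) point $p$ of $X$, using the explicit resolution \eqref{eqn:w48resolution}, should show that the partial derivatives of the quartic defining $Y$ locally generate $I_{X,p}$. Once $X$ is identified as the singular locus of the quartic $Y$, Coble's theorem forces $C$ to be non-hyperelliptic.

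The step I expect to be the main obstacle is promoting the set-theoretic containment $X \subseteq \operatorname{Sing}(Y)$ to a scheme-theoretic equality along the non-generic strata of $X$ -- in particular at the $64$ two-torsion points where $X$ is itself singular -- and verifying precisely that our $Y$ matches the form of the Coble quartic required by the cited characterization. This may call for shrinking $U^{\rm sm+}$ further, together with an application of the Buchsbaum--Eisenbud acyclicity criterion to control degeneracy behavior at the $2$-torsion locus. As a consistency check, Remark~\ref{rmk:genus3flex} identifies a $7$-dimensional open subset of $U/\!\!/G$ with the moduli of smooth plane quartics (i.e.\ non-hyperelliptic genus $3$ curves) with a marked flex point, while the moduli of hyperelliptic genus $3$ curves is only $5$-dimensional, so hyperelliptic $C$ can arise only from a proper closed subvariety of $U^{\rm gen}$.
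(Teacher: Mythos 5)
Your approach is genuinely different from the paper's: you reach for the Coble quartic and a uniqueness statement due to Beauville, whereas the paper reads projective normality of $X$ straight off the resolution \eqref{eqn:w48resolution} and then quotes Khaled's result that the $|2\Theta|$-embedded Kummer variety of a genus $3$ Jacobian is projectively normal if and only if the curve is non-hyperelliptic. The paper's route is essentially a one-liner once the resolution is in hand; yours requires substantially more geometric input.

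There are two concrete problems with what you wrote. First, the characterization you invoke is misstated. Beauville's theorem (\cite{beauville}, Prop.\ 3.1) says that for \emph{every} indecomposable p.p.a.v.\ $A$ of dimension $3$ there is a unique quartic in $|2\Theta|$ singular along $\kappa(A)$; the dichotomy is that this quartic is irreducible (the genuine Coble quartic) when $A = \Jac(C)$ with $C$ non-hyperelliptic, and is the double quadric $2Q$ when $C$ is hyperelliptic (where $Q$ is the quadric through $\kappa(A)$). In particular ``$\kappa(A)$ is contained in the singular locus of some quartic'' cannot distinguish the two cases. What does distinguish them is that $\operatorname{Sing}(2Q)=Q$ strictly contains $\kappa(A)$, so only in the non-hyperelliptic case is $\kappa(A)$ exactly $\operatorname{Sing}$ of the quartic. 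So your plan has to establish the scheme-theoretic equality $X=\operatorname{Sing}(Y)$ (not merely $X\subseteq\operatorname{Sing}(Y)$), or alternatively prove that your $Y$ is reduced/irreducible; the exact sequence $0\to\cO(-4)\to\cO\to\cO_Y\to 0$ asserts neither. Second, you leave precisely this step as a sketch (``a local Jacobian computation should show\ldots'') and flag it as the main obstacle, which it is: nothing in the framework immediately tells you the partials of the degree-$7$ hyperdiscriminant, restricted along $v$, cut out $X$ with its reduced structure at every point. By contrast, the paper never needs to touch $Y$ at all: the Bott computation on \eqref{eqn:w48resolution} gives surjectivity of $\rH^0(\cO_{\bP(V^*)}(n))\to\rH^0(\cO_X(n))$ for all $n\ge 0$ directly, and Khaled's criterion finishes. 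Your closing dimension count (via Remark~\ref{rmk:genus3flex}) is a sound sanity check but of course only rules out hyperellipticity generically, not for all $v\in U^{\rm sm+}$.
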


\begin{proof} 
Using the locally free resolution \eqref{eqn:w48resolution}, the map $\rH^0(\cO_{\bP(V^*)}(n)) \to \rH^0(\cO_X(n))$ is surjective for all $n \ge 0$. Hence $X$ is projectively normal, so $C$ is not hyperelliptic by \cite[\S 2.9.3]{khaled}.
\end{proof}

\begin{remark} \label{rmk:coblequartic}
The quartic hypersurface has an interpretation as the embedding of $SU_C(2)$ (see Section~\ref{sec:vectorbundles}) via its theta divisor \cite{narasimhan2}. This is also known as the Coble quartic, see \cite{beauville} and \cite{coble2} for more details.
\end{remark}

\subsection{Flag variety.}

Here is another approach which was pointed out to us by Jack Thorne.
\begin{compactitem}
\item $G/P = \Fl(1,7,V^*)$
\item $\cU = \cR_1^* \otimes \bigwedge^3(\cR_7/\cR_1)^*$
\item $U' = \bigwedge^3 \bC^6$
\item $G' = \GL_6(\bC)$.
\end{compactitem}

We are interested in the smallest orbit closure in $U'$, which is the
affine cone over $\Gr(3,6)$. This has codimension 10. In
characteristic 0, its minimal free resolution is as follows (the
coordinate ring is $\Sym(\bigwedge^3 \bC^6)$ and we only list the
partitions).
\begin{align*}
\bF_1 &= (2,1^4)\\
\bF_2 &= (2^4,1) + (3,2,1^4)\\
\bF_3 &= (3^2,2^2,1^2) + (3^5) + (5,2^5)\\
\bF_4 &= (4^3,2^3) + (4^2,3^3,1) + (5,3^3,2^2)\\
\bF_5 &= (5,4^2,3^2,2)^{\oplus 2}\\
\bF_6 &= (5^2,4^3,2) + (5^3,3^3) + (6,4^3,3^2)\\
\bF_7 &= (7,4^5) + (5^5,2) + (6^2,5^2,4^2)\\
\bF_8 &= (6^4,5,4) + (7,6,5^4)\\
\bF_9 &= (7,6^4,5)\\
\bF_{10} &= (7^6).
\end{align*}

To see the ranks, we include the graded Betti table (this follows Macaulay2 notation, so a term $d$ in row $i$ and column $j$ represents the free module $A(-i-j)^{\oplus d}$ in homological degree $j$).

\begin{verbatim}
        0  1   2   3   4    5   6   7   8  9 10
 total: 1 35 140 301 735 1080 735 301 140 35  1
     0: 1  .   .   .   .    .   .   .   .  .  .
     1: . 35 140 189   .    .   .   .   .  .  .
     2: .  .   . 112 735 1080 735 112   .  .  .
     3: .  .   .   .   .    .   . 189 140 35  .
     4: .  .   .   .   .    .   .   .   .  .  1
\end{verbatim}

\begin{remark}
The resolution changes in characteristic 2:
\begin{verbatim}
       0  1   2   3   4    5   6   7   8  9 10
total: 1 35 141 302 735 1080 735 302 141 35  1
    0: 1  .   .   .   .    .   .   .   .  .  .
    1: . 35 140 190   .    .   .   .   .  .  .
    2: .  .   1 112 735 1080 735 112   1  .  .
    3: .  .   .   .   .    .   . 190 140 35  .
    4: .  .   .   .   .    .   .   .   .  .  1
\end{verbatim}
and in characteristic 3:
\begin{verbatim}
       0  1   2   3   4    5   6   7   8  9 10
total: 1 35 140 321 756 1082 756 321 140 35  1
    0: 1  .   .   .   .    .   .   .   .  .  .
    1: . 35 140 189  20    1   .   .   .  .  .
    2: .  .   . 132 736 1080 736 132   .  .  .
    3: .  .   .   .   .    1  20 189 140 35  .
    4: .  .   .   .   .    .   .   .   .  .  1 
\end{verbatim} 
In all other characteristics, the Betti numbers agree with
characteristic 0. 
\end{remark}

The corresponding degeneracy locus $X_2$ is a torsor over an Abelian 3-fold: For each partition $\lambda$ in the resolution for $\Gr(3,6)$, we get the sheaf $\cR_1^{|\lambda|/3} \otimes \bS_{\lambda}(\cR_7/\cR_1)$. Also, the canonical sheaf of $\Fl(1,7,V^*)$ is $(\det\cR_7)^7 \otimes \cR_1^7$, which is the last term of the resolution. This shows that $\omega_{X_2} = \cO_{X_2}$. As in Theorem~\ref{thm:w39calc}, we can use Borel--Weil--Bott to get that $\rh^i(X_2; \cO_{X_2}) = \binom{3}{i}$.

\begin{proposition} \label{prop:flagabelian}
$X_2 \cong \tilde{X}$.
\end{proposition}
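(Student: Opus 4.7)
The plan is to use the projection $\pi_1 \colon \Fl(1,7,V^*) \to \bP(V^*)$ to realize $X_2$ as a finite double cover of $X$, and then to identify this cover with $\tilde X$ via Grothendieck duality combined with a uniqueness statement for the algebra structure on $\cO_X \oplus \omega_X$.

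First I would show $\pi_1(X_2) \subseteq X$. A point $(\ell, H)$ in $\Fl(1,7,V^*)$ lies in $X_2$ exactly when the image of $v$ in the fiber $\cU|_{(\ell, H)} = \ell^* \otimes \bigwedge^3(H/\ell)^*$ is a pure tensor in the $\bigwedge^3$-factor. Via the canonical identification $(H/\ell)^* \cong \ell^\perp/H^\perp$ (valid since $\ell \subset H$ implies $H^\perp \subset \ell^\perp \subset V$), this amounts to the image of $\bar v|_{[\ell]} \in \bigwedge^3\ell^\perp$ in $\bigwedge^3(\ell^\perp/H^\perp)$ being decomposable for some line $H^\perp \subset \ell^\perp$. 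Remark~\ref{rmk:kummerflag} shows that the existence of such a line forces $\bar v|_{[\ell]}$ into the codimension $4$ or codimension $7$ orbit of $\bigwedge^3 \bC^7$, i.e., $[\ell] \in X$. The same remark counts the set-theoretic fibers of $\pi := \pi_1|_{X_2} \colon X_2 \to X$: two points over each $[\ell] \in X \setminus Z$, one point over each $[\ell] \in Z$. Since $\pi$ is proper with finite fibers it is a finite surjection of generic degree $2$ ramified over $Z$, matching the fiber profile of $\tilde X \to X$.

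Next I want to identify $\pi_* \cO_{X_2}$ with $\cO_X \oplus \omega_X$ as $\cO_X$-modules. The cover $\pi$ carries a Galois involution $\iota$ (defined on $X_2 \setminus \pi^{-1}(Z)$ as the sheet interchange and extended across the codimension-$3$ locus $\pi^{-1}(Z)$ by normality of $X_2$), which decomposes $\pi_* \cO_{X_2} = \cO_X \oplus \cL$ into $\iota$-invariant and anti-invariant parts, with $\cL$ rank-one reflexive on $X$. At each $\iota$-fixed point $\tilde P \in X_2$, the differential $d\iota_{\tilde P}$ is an order-$2$ endomorphism of the $3$-dimensional $T_{\tilde P} X_2$ with no nonzero fixed vector (as $\tilde P$ is an isolated fixed point), so $d\iota_{\tilde P} = -\mathrm{id}$ and $\iota$ acts by $(-1)^3 = -1$ on the stalk of $\omega_{X_2}$; by connectedness of $X_2$ this sign holds globally, so $\iota$ acts by $-1$ on a trivialization $\omega_{X_2} \cong \cO_{X_2}$. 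Grothendieck duality for the finite morphism $\pi$ then gives $\pi_* \omega_{X_2} \cong \cHom_{\cO_X}(\pi_* \cO_{X_2}, \omega_X)$, and comparing $\iota$-isotypic summands after recording the $-1$ twist on $\omega_{X_2}$ yields $\cL \cong \omega_X$ on the smooth locus of $X$, whence on all of $X$ by reflexivity (the singular locus $Z$ has codimension $3$ in the normal variety $X$).

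Finally, the $\cO_X$-algebra structure on $\pi_* \cO_{X_2} \cong \cO_X \oplus \omega_X$ is encoded by a multiplication $\mu \colon S^2\omega_X \to \cO_X$. Such maps form a $1$-dimensional space: on the smooth locus $\omega_X$ is a $2$-torsion line bundle so $\cHom(S^2\omega_X, \cO_X) \cong \cO_X$ with global sections $\bC$, and this extends uniquely across $Z$ by normality and reflexivity. Both the multiplication coming from $X_2$ and the multiplication $S^2\omega_X \cong \cI_{Z,X} \hookrightarrow \cO_X$ defining $\tilde X$ (Lemma~\ref{lemma:w37algstructure}) are nonzero, so they are proportional by some $c \in \bC^\times$; choosing a square root $\sqrt{c}$ (possible since we are over $\bC$) and rescaling $\omega_X$ by $\sqrt{c}$ identifies the two algebra structures. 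This gives an isomorphism $\pi_* \cO_{X_2} \cong \pi_* \cO_{\tilde X}$ of $\cO_X$-algebras and hence $X_2 \cong \tilde X$. The main obstacle is the careful bookkeeping of $\iota$-equivariance in Grothendieck duality, together with verifying that the isomorphism $\cL \cong \omega_X$ extends correctly across the singular locus $Z$.
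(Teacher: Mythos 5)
Your proof is correct, and the first half coincides with the paper's: you use Remark~\ref{rmk:kummerflag} (after the same dualization $(H/\ell)^*\cong\ell^\perp/H^\perp$ the paper makes implicitly) to show $\pi|_{X_2}\colon X_2\to X$ is a finite morphism of generic degree $2$, two-to-one over $X\setminus Z$ and one-to-one over $Z$. Where you diverge is the last step: the paper simply declares ``We conclude that $X_2\cong\tilde X$,'' with no further justification, whereas you actually supply the argument. Your route is to produce the Galois involution $\iota$ on $X_2$, compute its sign on $\omega_{X_2}$ via the isolated fixed points, feed that sign into the $\iota$-equivariant form of Grothendieck duality to identify the anti-invariant summand $\cL$ of $\pi_*\cO_{X_2}$ with $\omega_X$, and then observe that $\cHom(S^2\omega_X,\cO_X)$ has a one-dimensional space of global sections (since $\omega_X$ is $2$-torsion on $X\setminus Z$ by Lemma~\ref{lemma:w37algstructure}, and $Z$ has codimension $3$), so that the algebra structures on $\cO_X\oplus\omega_X$ coming from $X_2$ and from $\tilde X$ agree after rescaling $\omega_X$ by a square root. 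This is a genuine addition: without something like it, the paper's ``finite of degree $2$'' alone does not pin down the cover, since $X\setminus Z$ has many connected double \'etale covers. Two small points of hygiene: the extension of $\iota$ across $\pi^{-1}(Z)$ is better justified by saying $X_2$ is the normalization of $X$ in the degree-$2$ (hence Galois) extension $\bC(X_2)/\bC(X)$, on which the Galois group visibly acts by morphisms, rather than by invoking ``normality of $X_2$'' to extend a map; and the $\iota$-equivariance of the duality isomorphism (equivalently, that the duality trace $\pi_*\omega_{X_2}\to\omega_X$ kills the $\iota$-anti-invariants) deserves at least a sentence, though it is standard.
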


\begin{proof} 
Let $\pi \colon \Fl(1,7,V^*) \to \bP(V^*)$ be the projection map. We claim that $\pi(X_2) = X$ and that $\pi|_{X_2}$ is a finite morphism of degree 2. Note that $\pi^* \cQ = V^*/\cR_1$ and $\pi^* \cR = \cR_1$. For $x \in \bP(V^*)$, pick $(x \subset H) \in \pi^{-1}(x)$. We have a surjection
  \[
  (\bigwedge^3 \cQ^* \otimes \cR^*)(x) \to (\bigwedge^3(\cR_7/\cR_1)^*
  \otimes \cR_1^*)(x \subset H)
  \]
and so $(x \subset H) \in X_2$ if and only if $v(x) \in (V^*/x)^*$ is a pure tensor when mapped to $(H/x)^*$. By Remark~\ref{rmk:kummerflag}, there exists such an $H$ if and only if $x \in X$, so $\pi(X_2) = X$. Furthermore, there exists exactly 2 such $H$ if $x \in X \setminus Z$ and there exists exactly 1 such $H$ if $x \in Z$. So $\pi|_{X_2}$ is a finite morphism of degree 2. We conclude that $X_2 \cong \tilde{X}$.
\end{proof}

\subsection{Projective duality.}

We can instead work with the following data:
\begin{compactitem}
\item $G/P = \Gr(7,V^*) = \bP(V)$
\item $\cU = \bigwedge^4 \cR^*$
\item $U' = \bigwedge^4 \bC^7$
\item $G' = \GL_7(\bC)$
\end{compactitem}

The orbit classifications in $\bigwedge^4 \bC^7$ and $\bigwedge^3 \bC^7$ are the same, so we can proceed as in \S\ref{section:geometricw48}. 

Under the identification $\Gr(7,V^*) = \bP(V)$, the bundle
$\bigwedge^4 \cR^*$ becomes $\bigwedge^3 \cQ^* \otimes \cO(1)$. Hence,
given a section $v \in U^{\rm sm+}$, we get a quartic hypersurface
$Y^d$ and a Kummer 3-fold $X^d$ in $\bP(V)$.

\begin{proposition} 
  We have isomorphisms $X \cong X^d$ and $Y^d \cong Y$. Furthermore,
  $Y$ and $Y^d$ are projectively dual varieties.
\end{proposition}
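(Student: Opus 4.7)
My plan has three steps and crucially uses the symmetry of the flag variety $\Fl(1,7,V^*)$ together with the $\GL(V)$-equivariance of all constructions.

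\textbf{Step 1 ($X \cong X^d$).} The flag variety $\Fl(1,7,V^*)$ carries two projections $\pi_1\colon \Fl(1,7,V^*) \to \bP(V^*)$ and $\pi_2\colon \Fl(1,7,V^*) \to \Gr(7,V^*) = \bP(V)$. Pushing the bundle $\cU_{\Fl} := \cR_1^* \otimes \bigwedge^3(\cR_7/\cR_1)^*$ forward along $\pi_1$ recovers the Approach 1 bundle on $\bP(V^*)$, while pushing forward along $\pi_2$ recovers the Approach 2 bundle $\bigwedge^4 \cR^*$ on $\Gr(7,V^*)$ (both by Borel--Weil, using $\bigwedge^3 W^* \cong \bigwedge^3 W \otimes (\det W)^*$ for the rank-$6$ bundle $W = \cR_7/\cR_1$). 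Hence the subvariety $X_2 \subset \Fl(1,7,V^*)$ defined by a section $v \in \bigwedge^4 V$ of $\cU_{\Fl}$ maps via $\pi_1$ to the Approach 1 degeneracy locus and via $\pi_2$ to the Approach 2 degeneracy locus. Proposition~\ref{prop:flagabelian} (whose proof applies symmetrically to either projection, via the duality between the line-count of Remark~\ref{rmk:kummerflag} and the corresponding hyperplane-count for $\bigwedge^4\bC^7$) gives $\tilde X \cong X_2 \cong \tilde X^d$. Passing to Kummer quotients yields $X \cong X^d$.

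\textbf{Step 2 (projective duality).} The quartic $Y$ is the zero locus of a polynomial $F(v,\phi)$ of degree $4$ in $\phi \in V^*$ whose coefficients depend polynomially and $\GL(V)$-equivariantly on $v \in U$. The Gauss map $\nabla_\phi F \colon \bP(V^*) \dashrightarrow \bP(V)$ is therefore $\GL(V)$-equivariant in $v$, so its image is a $\GL(V)$-stable subvariety of $\bP(V)$ attached equivariantly to $v$. The parallel equivariant construction in Approach 2 produces precisely one such quartic hypersurface in $\bP(V)$, namely $Y^d$. To confirm $\nabla F(Y) \subseteq Y^d$, equivariance reduces the check to a single smooth orbit representative of $Y$, where the explicit description of the codimension-$1$ orbit $Y' \subset \bigwedge^3 \bC^7$ allows one to match tangent hyperplanes with the fiber-orbit condition cutting out $Y^d$. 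Equality then follows because both $\nabla F(Y)$ and $Y^d$ are irreducible quartic hypersurfaces.

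\textbf{Step 3 ($Y \cong Y^d$).} From the resolution \eqref{eqn:w48resolution}, $X$ is scheme-theoretically cut out by the partial derivatives of $F$, so $Y$ is uniquely reconstructed from $X$ as the unique quartic double along $X$; likewise $Y^d$ from $X^d$. Combined with the isomorphism $X \cong X^d$ from Step 1, this gives $Y \cong Y^d$. Alternatively, Step 2 and projective biduality already yield a birational identification $Y \cong Y^d$.

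The principal obstacle lies in Step 2: $\GL(V)$-equivariance alone confines the Gauss image to a $\GL(V)$-stable hypersurface in $\bP(V)$, and distinguishing $Y^d$ from other such hypersurfaces requires either a direct local computation at a smooth point of $Y$ or an appeal to the classical projective self-duality of the Coble quartic of a genus-$3$ Jacobian (cf.\ \cite{beauville}, \cite{coble2}).
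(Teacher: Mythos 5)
Your Step~1 matches the paper's proof of $X \cong X^d$ almost exactly: both run the argument from Proposition~\ref{prop:flagabelian} through the second projection $\pi_2 \colon \Fl(1,7,V^*) \to \Gr(7,V^*)$, using the identification $\bigwedge^4 {\bC^7}^* \cong \bigwedge^3 \bC^7$ to translate the fiberwise pure-tensor condition and the point-count from Remark~\ref{rmk:kummerflag}.

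For the remaining assertions, however, the paper simply cites Pauly \cite[Theorem 3.1]{pauly}: once $X$ and $X^d$ are known to be Kummers embedded by dual linear series, Pauly's theorem says their Coble quartics (which Remark~\ref{rmk:coblequartic} identifies with $Y$ and $Y^d$) are isomorphic and projectively dual. Your Steps~2 and~3 try to reproduce this without invoking Pauly, and both have real gaps. In Step~2, the conclusion ``Equality then follows because both $\nabla F(Y)$ and $Y^d$ are irreducible quartic hypersurfaces'' is circular: you have not shown that the Gauss image $\nabla F(Y) = Y^\vee$ is a quartic, and for a generic quartic in $\bP^7$ the dual has degree $4 \cdot 3^6$; the fact that the dual of the Coble quartic is again a quartic is itself the nontrivial content of Pauly's theorem. (The weaker fact that $Y^\vee$ is a hypersurface contained in $Y^d$ would suffice by irreducibility, but the local computation you gesture at is not carried out and the equivariance framing is imprecise, since $\GL(V)$ does not stabilize $v$ or $Y$.) In Step~3, the direction you establish from the resolution \eqref{eqn:w48resolution} is that $X$ is determined by $Y$ (as its singular locus), whereas what you need is the converse: that $Y$ is the \emph{unique} quartic singular along the $(2,2,2)$-polarized Kummer $X$. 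That uniqueness is a known fact about Coble quartics, but it is not an immediate consequence of the resolution and you do not justify it. As you yourself flag, the honest fix is precisely the appeal to Coble-quartic self-duality, which is what the paper does by citing \cite{pauly}; once you make that appeal explicit, your Steps~2--3 become redundant with the paper's argument rather than an alternative to it.
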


\begin{proof}
  We first show that $X \cong X^d$. Consider the variety $X_2 \subset
  \Fl(1,7,V^*)$ constructed in the previous section from $v$. Let
  $\pi_2 \colon \Fl(1,7,V^*) \to \Gr(7,V^*)$ be the projection map. We
  claim that $\pi_2(X_2) = X^d$ and that $\pi_2$ is finite of degree
  2. This will prove the claim.

  Note that $\pi_2^* \cR = \cR_7$. For $H \in \Gr(7,V^*)$, pick $(x
  \subset H) \in \pi_2^{-1}(H)$. We have a surjection
  \[
  (\bigwedge^4 \cR^*)(H) \to (\bigwedge^3 (\cR_7 / \cR_1)^*\otimes
  \cR_1^*)(x \subset H)
  \]
  given by comultiplication. So $(x \subset H) \in X_2$ if and only if
  $v(x) \in H^*$ is a pure tensor when mapped to $(H/x)^*$. After
  picking a volume form in $\bigwedge^7 \bC^7$, we can identify
  $\bigwedge^4 {\bC^7}^*$ and $\bigwedge^3 {\bC^7}$, in which case the
  rest of the argument is similar to the proof of
  Proposition~\ref{prop:flagabelian}.

So $X$ and $X^d$ are embedded by dual linear series. By \cite[Theorem 3.1]{pauly}, the Coble quartics of $X$ and $X^d$ are isomorphic and projective dual to one another.
\end{proof}

\subsection{Doing calculations.}

We have written some Macaulay2 code for calculating the degeneracy loci in \S\ref{section:geometricw48}. However, it is messy so we do not include it here. We will just comment on how one can practically go about these calculations and explain the analogue of Example~\ref{eg:c3c3c3}.

First, we need to find a way to calculate the ideals of the appropriate low codimension orbits in $\bigwedge^3 \bC^7$. An explicit construction of the equation $f$ for the degree 7 hypersurface was given in \cite[Remark 4.4]{kimura}. Namely, he constructs two symmetric $7 \times 7$ matrices $\phi(x)$ and $\phi^*(x)$ whose entries are homogeneous polynomials of degrees 3 and 4, respectively, such that $\phi(x) \phi^*(x) = f(x) I_7$, i.e., the pair $(\phi(x), \phi^*(x))$ is a matrix factorization for $f$. Explicit summation formulas are given for the entries of these matrices.

From \eqref{eqn:w37codim4}, the Jacobian ideal of $f$ is reduced and gives the equations for the codimension 4 orbit. The ideal for the codimension 7 orbit is given by the entries of the matrix $\phi^*(x)$. One possibility to check this is to use that the representation generating this ideal is $S^2 \bC^7$ (up to a power of determinant) \eqref{eqn:w37codim7} and that this representation has multiplicity 1 in $S^4(\bigwedge^3 {\bC^7}^*)$. 

In practical computations, the product of the matrices $\phi(x)$ and $\phi^*(x)$ can be calculated in a few seconds. A possible way to proceed is to evaluate these against a chosen section $v$ (after choosing an open affine in $\bP^7$ to work over) and then to take the product in order to calculate the equation of the codimension 1 degeneracy locus. If the section is generic, then the Jacobian ideal of this equation will be the same as evaluating the Jacobian ideal of $f$ at $v$ (note that calculating $f$ and its Jacobian ideal before evaluating may take a long time).

\begin{example}
Suppose we choose a decomposition $V = \bC^2 \oplus \bC^2 \oplus \bC^2 \oplus \bC^2$, and we pick a generic vector in $\bC^2 \otimes \bC^2 \otimes \bC^2 \otimes \bC^2 \subset \bigwedge^4 V$. In this case, the codimension 4 degeneracy locus will have 4 irreducible components, all of degree 6. Taking any two of these components, the affine cone over their intersection is a bidegree $(2,2)$ hypersurface in $\bC[x,y] \otimes \bC[s,t]$ whose multi-graded Proj is a nonsingular curve of genus 1 (this agrees with the data obtained in \cite[\S 3.1.2]{weiho})
\end{example}

\section{$\bigwedge^4_0 \bC^8$.}

Let $V$ be a vector space of dimension 8 equipped with a symplectic
form $\omega \in \bigwedge^2 V^*$. The symplectic form gives an
injective multiplication map $\bigwedge^2 V^* \to \bigwedge^4 V^*$ and
we define $\bigwedge^4_0 V = \bigwedge^4_0 V^*$ to be the
cokernel. The relevant data:
\begin{compactitem}
\item $U = \bigwedge^4_0 V$
\item $G = \Sp(V)$
\item $G/P = \bP(V^*) = \Gr(1,V^*)$
\item $\cU = \bigwedge^3_0 (\cR^\perp / \cR) \otimes \cR^* \cong
  \bigwedge^3_0 (\cR^\perp / \cR) \otimes \cO(1)$
\item $U' = \bigwedge^3_0 \bC^6$
\item $G' = \Sp_6(\bC)$
\end{compactitem}

The ring of invariants $\Sym(U^*)^{(G,G)}$ is a polynomial ring with generators of degrees 2, 5, 6, 8, 9, 12, and the graded Weyl group $W$ is the Weyl group of type ${\rm E}_6$ \cite[\S 9]{vinberg}.

\begin{remark} \label{rmk:genus3hyperflex}
Letting $\fh$ be the 6-dimensional reflection representation of $W$, it is known that the GIT quotient $U/\!\!/G \cong \fh/W$ has an open subset isomorphic to the moduli space of smooth plane quartics (i.e., non-hyperelliptic genus 3 curves) with a marked hyperflex point (i.e., a point with tangency of order 4; the existence of such a point is a codimension 1 condition on the space of plane quartics). See \cite[Proposition 1.15]{looijenga}.
\end{remark}

\subsection{Modules over $\cO_{U'}$.}

Let $W$ be a 6-dimensional vector space equipped with a symplectic
form, so we can write $U' = \bigwedge^3_0 W$ and $G' = \Sp(W)$. Set $A
= \Sym(\bigwedge^3_0 W^*) = \cO_{U'}$. 

There is a degree 4 $G'$-invariant hypersurface in $U'$, which we
denote by $Y'$. The other orbits $X'$ and $Z'$ have codimension 4 and
7, respectively. The minimal free resolution of $\cO_{X'}$ is
\begin{align} \label{eqn:w036codim4}
0 \to W^* \otimes A(-7) \to \bigwedge^2_0 W^* \otimes A(-6) \to S^2
W^* \otimes A(-4) \to \bigwedge^3_0 W^* \otimes A(-3) \to A \to
\cO_{X'} \to 0,
\end{align}
which we can calculate by Macaulay2. Furthermore, the last matrix has
corank 3 when specialized to a nonzero point in the highest weight
orbit, so the localization of $\cO_{X'}$ at such a point is a
Cohen--Macaulay ring of type 3.

We set $M$ to be a certain twist of the cokernel of the dual of the
last differential of \eqref{eqn:w036codim4}. In particular, it has a
presentation of the form
\begin{align} \label{eqn:w036canonical}
\bigwedge^2_0 W(-2) \to W(-1) \to M \to 0.
\end{align}

The orbit closure $Z'$ is the affine cone over the Lagrangian
Grassmannian. The minimal free resolution for $\cO_{Z'}$ is
\begin{equation}
\begin{split}
  0 \to A(-10) \to S^2 W^* (-8) \to \bS_{[2,1]} W^*(-7) \to
  \bS_{[2,1,1]} W^*(-6) \\*
  \to \bS_{[2,1,1]} W^*(-4) \to \bS_{[2,1]} W^*(-3) \to S^2 W^*(-2)
  \to A \to \cO_{Z'} \to 0
\end{split}
\end{equation}
(recall that $\bS_{[\lambda]}$ was defined in \eqref{eqn:symplecticschur}).

\begin{lemma} \label{lemma:w036algstructure}
  There is a $G'$-equivariant $\cO_{X'}$-linear isomorphism $S^2 M
  \cong I_{Z',X'}$.
\end{lemma}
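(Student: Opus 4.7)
The plan is to mirror the strategy of Lemma~\ref{lemma:w37algstructure}. First, from the presentation \eqref{eqn:w036canonical} of $M$, standard homological algebra yields
\[
W \otimes \bigwedge^2_0 W \otimes \cO_{X'}(-3) \xrightarrow{\delta} S^2 W \otimes \cO_{X'}(-2) \to S^2 M \to 0,
\]
so $S^2 M$ has generator $S^2 W$ in degree $2$ and relations in $W \otimes \bigwedge^2_0 W$ in degree $3$. Second, the resolution of $\cO_{Z'}$ presents $I_{Z'}$ over $\cO_{U'}$ with generator $S^2 W(-2)$ and first syzygies $\bS_{[2,1]}W(-3)$ (identifying $W \cong W^*$ via the symplectic form). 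Passing to the quotient $\cO_{X'} = \cO_{U'}/I_{X'}$, the degree-$3$ relations pick up the preimage in $S^2 W \otimes \bigwedge^3_0 W^*$ of $I_{X'}(3) = \bigwedge^3_0 W^* \subset S^3 \bigwedge^3_0 W^*$ under multiplication; using \eqref{eqn:w036codim4} and Schur's lemma, this contributes exactly one additional copy of $\bigwedge^3_0 W$ on top of the kernel $\bS_{[2,1]}W$. Thus $I_{Z',X'}$ has minimal degree-$3$ relations $\bS_{[2,1]}W \oplus \bigwedge^3_0 W$.

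The comparison step hinges on the $\Sp(W)$-decomposition
\[
W \otimes \bigwedge^2_0 W \;\cong\; \bigwedge^3_0 W \oplus \bS_{[2,1]}W \oplus W,
\]
which I would derive from the classical $\GL$-identity $W \otimes \bigwedge^2 W \cong \bigwedge^3 W \oplus \bS_{2,1}W$ together with the $\Sp$-branchings $\bigwedge^2 W \cong \bigwedge^2_0 W \oplus \bC$, $\bigwedge^3 W \cong \bigwedge^3_0 W \oplus W$, and $\bS_{2,1}W \cong \bS_{[2,1]}W \oplus W$. A parallel (character, or Koike--Terada) computation gives $S^2 W \otimes \bigwedge^3_0 W \cong \bS_{[3,1,1]}W \oplus \bS_{[2,1]}W \oplus \bigwedge^3_0 W$, which crucially contains no copy of $W$. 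By Schur's lemma the $W$-component of $\delta$ in degree $3$ must therefore vanish identically, so the minimal degree-$3$ relations of $S^2 M$ reduce to $\bS_{[2,1]}W \oplus \bigwedge^3_0 W$, coinciding with those of $I_{Z',X'}$.

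Finally, since the generator module $S^2 W$ and the relation module $\bS_{[2,1]}W \oplus \bigwedge^3_0 W$ are each multiplicity-free as $\Sp(W)$-representations, any nonzero $\Sp(W)$-equivariant homomorphism between $S^2 M$ and $I_{Z',X'}$ is determined up to independent scalars on the irreducible summands and, matching the presentations, must be an isomorphism; the existence of such a nonzero map comes from Schur's lemma applied to a lift of the common generator $S^2 W$. The main obstacle in the plan is to carry out the symplectic branching decompositions cleanly and, in particular, to confirm that the extraneous $W$-summand of the source of $\delta$ really maps to zero, so that no genuine extra relation is smuggled into the presentation of $S^2 M$ beyond those already presenting $I_{Z',X'}$.
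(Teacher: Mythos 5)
Your proposal is correct and follows the route the paper intends: the paper's proof of this lemma is the single sentence ``The proof is the same as for Lemma~\ref{lemma:w37algstructure},'' and you are carrying out that transposition from $\GL_7$ to $\Sp_6$. You also identify the one place where the analogy is not completely mechanical, and you handle it correctly. In the $\GL$ case the presentation of $S^2 M$ has relation module $W \otimes \bigwedge^2 W \cong \bigwedge^3 W \oplus \bS_{2,1}W$, which literally matches the relation module of $I_{Z',X'}$; in the symplectic case the relation module of $S^2 M$ is $W \otimes \bigwedge^2_0 W \cong \bigwedge^3_0 W \oplus \bS_{[2,1]}W \oplus W$, which carries an extra copy of $W$ not present in the relations $\bS_{[2,1]}W \oplus \bigwedge^3_0 W$ of $I_{Z',X'}$. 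Your observation that $S^2 W \otimes \bigwedge^3_0 W$ contains no $W$-isotypic component (equivalently, that $\bigwedge^3_0 W$ does not occur in $W \otimes S^2 W \cong \bS_{[3]}W \oplus \bS_{[2,1]}W \oplus W$, none of whose highest weights is $(1,1,1)$) forces, by Schur's lemma, the $W$-component of the relation map $\delta$ to vanish in degree $3$, and hence identically since the presentation is by free modules generated in a single degree. After that the two modules have literally matching multiplicity-free presentations and the argument closes exactly as in the $\GL$ case. So this is the same approach as the paper, supplemented by a branching check the paper leaves implicit; it is a worthwhile check because the matching of presentations is precisely what ``the same proof'' relies on, and it is not automatic from the $\GL$ pattern.
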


\begin{proof} The proof is the same as for
  Lemma~\ref{lemma:w37algstructure}. 
\end{proof}

We define $\cY$, $\cX$, $\cZ$, and $\cM$ to be the global versions of
$Y'$, $X'$, $Z'$, and $M$.

\subsection{Geometric data from a section.} \label{section:geometricw048}

Now choose a section $v \in U^{\rm gen}$ and set $Y = v(\bP(V^*)) \cap
\cY$, $X = v(\bP(V^*)) \cap \cX$ and $Z = v(\bP(V^*)) \cap \cZ$.

To get the locally free resolution of $X$ over $\cO_{\bP^7}$, we
replace $W$ with $\cR^\perp / \cR$ and replace $(-i)$ with
$\cO(-i)$. So the locally free resolution for $\cO_X$ is
\[
0 \to \cR^\perp/\cR(-7) \to \bigwedge^2_0(\cR^\perp/\cR)(-6) \to
S^2(\cR^\perp/\cR)(-4) \to \bigwedge^3_0(\cR^\perp/\cR)(-3) \to
\cO_{\bP^7} \to \cO_X \to 0,
\]
and the locally free resolution for $\omega_X$ is
\[
0 \to \cO_{\bP^7}(-8) \to \bigwedge^3_0(\cR^\perp/\cR)(-5) \to
S^2(\cR^\perp/\cR)(-4) \to \bigwedge^2_0(\cR^\perp/\cR)(-2) \to
(\cR^\perp/\cR)(-1) \to \omega_X \to 0,
\]
Using Borel--Weil--Bott as in Theorem~\ref{thm:w39calc}, we get $\rh^0(\cO_X) = 1$ and $\rh^2(\cO_X) = 3$ and all other cohomology vanishes. So by Serre duality, $\rh^i(\cO_X \oplus \omega_X) = \binom{3}{i}$.

From Lemma~\ref{lemma:w036algstructure}, we get a $\cO_X$-linear
multiplication map $\mu \colon S^2 \omega_X \to \cO_X$ and hence an
$\cO_X$-algebra structure on $\cO_X \oplus \omega_X$. We define
$\tilde{X} = \Spec_{\cO_X}(\cO_X \oplus \omega_X)$ and let $\pi \colon
\tilde{X} \to X$ be the projection map.

We define $U^{\rm sm+}$ to be the subset of $U^{\rm sm}$ where the
Cohen--Macaulay type of $X$ along $Z$ remains 3.

\begin{theorem} If $v \in U^{\rm sm+}$, then $\tilde{X}$ is
  smooth. Furthermore, $\tilde{X}$ is a torsor over an Abelian
  $3$-fold, $\cL = \pi^* \cO_X(1)$ is an indecomposable
  $(2,2,2)$-polarization on $\tilde{X}$, and $\tilde{X}$ is not the
  Jacobian of a hyperelliptic curve.
\end{theorem}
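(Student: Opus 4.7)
The proof will closely parallel the argument from the preceding section on $\bigwedge^4 \bC^8$, with one additional step at the end to handle the hyperelliptic claim. The plan has four parts.

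First, I would establish smoothness of $\tilde{X}$ by a purely local argument, exactly as in the $\bigwedge^4 \bC^8$ case. At points $P \in X \setminus Z$ we have $\omega_{X,P} \cong \cO_{X,P}$ and the multiplication $\mu_P$ is nondegenerate, so $\cO_{\tilde X, P} \cong \cO_{X,P}[t]/(t^2-c)$ for some unit $c$, which is étale over $\cO_{X,P}$. At a point $P \in Z$ where $X$ is singular, set $R = \cO_{X,P}$, $\fm = \cI_{Z,P}$, and $S = R \oplus \omega_R$. The surjectivity of $S^2 \omega_R \to \fm$ from Lemma~\ref{lemma:w036algstructure} gives $\fn^2 = \fm \oplus \fm\omega_R$, so $\dim_{R/\fm}(\fn/\fn^2) = \dim_{R/\fm}(\omega_R/\fm\omega_R)$, which equals the Cohen--Macaulay type $3$ of $R$ by hypothesis. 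Since $\dim S = 3$, $S$ is regular.

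Second, apply Theorem~\ref{thm:cohomologyabelian}: we already know $\omega_{\tilde X} = \cO_{\tilde X}$ and $\rh^i(\tilde X; \cO_{\tilde X}) = \binom{3}{i}$ from the cohomology computations in \S\ref{section:geometricw048}, so $\tilde X$ is a torsor over its Albanese, an Abelian 3-fold.

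Third, for the polarization, compute
\[
\rh^0(\tilde X; \cL) = \rh^0(X; (\cO_X \oplus \omega_X)\otimes \cO_X(1)) = 8,
\]
using the two locally free resolutions of $\cO_X$ and $\omega_X$ together with Borel--Weil--Bott on $\bP(V^*)$ (the twist by $\cO(1)$ shifts the relevant weights so that the computation is a variant of the one done in Theorem~\ref{thm:w39calc}). Hence the map to $\bP(V^*)$ is given by a complete linear series, so $\cL$ has type $(a,b,c)$ with $abc = 8$. By \cite[\S 2, Corollary 4]{mumford1}, each of $a,b,c$ is even, forcing $(a,b,c)=(2,2,2)$. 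Indecomposability follows as before from \cite[Theorem 4.8.2]{birkenhake}: a product decomposition into $s$ factors would force the degree of $\tilde X \to X \subset \bP(V^*)$ to be $2^s$, but here the degree is exactly $2$.

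Fourth, and this is the new step, rule out hyperelliptic Jacobians. Since $(\tilde X,\cL)$ is an indecomposable $(2,2,2)$-polarized Abelian threefold, after a translation $\cL = 2\Theta$ for some principal polarization $\Theta$, and every indecomposable principally polarized Abelian threefold is the Jacobian of some smooth genus $3$ curve $C$. It then suffices to show that $C$ is not hyperelliptic; by \cite[\S 2.9.3]{khaled}, this follows if the Kummer embedding $X \subset \bP(V^*)$ is projectively normal. To check projective normality I would use the locally free resolution of $\cO_X$ on $\bP(V^*)$ displayed in \S\ref{section:geometricw048}: twist by $\cO(n)$ for each $n \ge 0$ and apply Borel--Weil--Bott term by term to show $\rH^1(\bP(V^*); \cI_X(n)) = 0$, which gives surjectivity of $\rH^0(\cO_{\bP(V^*)}(n)) \to \rH^0(\cO_X(n))$. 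The main obstacle is this last cohomology vanishing: each term $S^2(\cR^\perp/\cR)(-4+n)$, $\bigwedge^3_0(\cR^\perp/\cR)(-3+n)$, etc. is an irreducible symplectic homogeneous bundle, so by the symplectic Bott theorem one checks that the $i$-th term contributes only in cohomological degree $0$ for $n \ge 0$, and a standard spectral sequence (or hypercohomology) chase then yields the required surjectivity and hence projective normality.
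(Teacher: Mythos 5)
Your proposal is correct and follows exactly the same route as the paper, which simply cites the analogous proofs from \S\ref{section:geometricw48} on $\bigwedge^4 \bC^8$: the local regularity argument via Cohen--Macaulay type 3 at points of $Z$, the appeal to Theorem~\ref{thm:cohomologyabelian}, the $\rh^0(\tilde X;\cL)=8$ computation plus Mumford's evenness criterion and the Birkenhake--Lange degree criterion for indecomposability, and projective normality from the locally free resolution giving non-hyperellipticity via Khaled. You have spelled out what ``analogous'' means, including the small bridging step that an indecomposable principally polarized Abelian 3-fold is a Jacobian, which the paper leaves implicit.
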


\begin{proof} The proofs are analogous to the ones in
  \S\ref{section:geometricw48}.
\end{proof}

\section{$\spin(16)$.} \label{sec:spin16}

Let $B$ be a vector space of dimension 16 equipped with a quadratic form $q \in S^2(B^*)$. Let $Z(q)$ be the zero locus of this quadratic form. We let $\Spin(B)$ be the simply-connected double cover of $\SO(B)$. One construction is to realize it as a multiplicative subgroup of a Clifford algebra, and we set ${\bf G}\Spin(B)$ to be the Clifford group generated by $\Spin(B)$ and the scalar matrices. There are two spin representations $\spin^\pm(B)$. The choice of spin representation will not affect our results, so we pick one and call it $\spin(B)$. Furthermore, $Z(q)$ possesses two ``spinor bundles'' which we call $\cS^+$ and $\cS^-$. These can be constructed as pushforwards of line bundles from the flag variety of $\Spin(B)$, or more direct geometric means (see \cite{ottaviani} for details). There is a perfect pairing
\[
\cS^+ \otimes \cS^- \to \cO(-1)
\]
\cite[Theorem 2.8]{ottaviani} and the sections of $\cS^+(1)$ and $\cS^-(1)$ give the two half-spin representations (which follows from their descriptions as pushforwards of line bundles).

The relevant data:
\begin{compactitem}
\item $U = \spin(B)$
\item $G = {\bf G}\Spin(B)$
\item $G/P = Z(q)$ (quadric hypersurface)
\item $\cU = \cS^+(1)$
\item $U' = \spin^+(14)$
\item $G' = {\bf G}\Spin_{14}(\bC)$
\end{compactitem}

The ring of invariants $\Sym(U^*)^{(G,G)}$ is a polynomial ring with generators of degrees 2, 8, 12, 14, 18, 20, 24, 30, and the graded Weyl group is the Weyl group of type ${\rm E}_8$ \cite[\S 9]{vinberg}.

\begin{remark} \label{rmk:genus4}
A non-hyperelliptic genus 4 curve lies on a unique quadric in its canonical embedding, and the locus of curves $C$ where this quadric is singular (i.e., $C$ has a vanishing $\theta$-characteristic) has codimension 1. This condition also implies that $C$ has a unique degree 3 map to $\bP^1$. If we further impose that this map has a point with non-simple ramification, the locus loses another dimension. Letting $\fh$ be the 8-dimensional reflection representation of $W$, the GIT quotient $U/\!\!/G \cong \fh/W$ has an open subset which should be isomorphic to this moduli space. We could not find any mention of this in the existing literature.

To get the interpretation for $\bC^7 / W({\rm E}_7)$ (Remark~\ref{rmk:genus3flex}), one first realizes $\bC$ as the smooth locus of a cuspidal cubic. Then $(P_1, \dots, P_7)$ becomes 7 points in the plane and blowing them up gives a del Pezzo surface. Its anticanonical divisor gives a map to $\bP^2$ branched along the plane quartic mentioned in Remark~\ref{rmk:genus3flex}.

When we do the same thing for 8 points, the corresponding del Pezzo surface is mapped to a quadric cone in $\bP^3$ via twice its anticanonical divisor, and it is branched along a genus 4 curve. However, 8 points in the plane generically do not lie on a cuspidal cubic; this condition on the points corresponds to the ramification condition on the genus 4 curve mentioned above. This was brought to our attention by Igor Dolgachev.
\end{remark}

The representation $\spin(14)$ has four orbits $Z(f)', Y', X', Z'$, of interest, which are of codimensions 1, 5, 10, and 14, respectively. All of them are Gorenstein and have rational singularities, but we have not yet  calculated the minimal free resolutions for $X'$ and $Z'$ due to certain extension problems. The hypersurface $Z(f)$ has degree 8.

The results for this example are incomplete, so we will just state what we expect to be true. For a generic section $v \in U$, the above four orbits give degeneracy loci $Z(f), Y, X, Z$. What should happen is that there is a curve $C$ of genus 4 such that $X$ is the Kummer variety of $\Jac(C)$ and $Z$ is its singular locus consisting of 256 points. Furthermore, we should have $Y = SU_C(2)$ (see \S\ref{sec:vectorbundles} for the definition of $SU_C(2)$). 

One can check that $Z(f)$ is a quartic hypersurface in $Z(q)$, and it should be an analogue of a Coble hypersurface. The curve $C$ should be a non-hyperelliptic curve with vanishing theta characteristic: every non-hyperelliptic curve $C$ can be written as a  complete intersection of a quadric and cubic in its canonical embedding, and having a vanishing theta characteristic means that this quadric is singular. For comparison, the analogues of Coble hypersurfaces for genus 4 curves without a vanishing theta characteristic were studied in \cite{oxburypauly}.

We remark that having the minimal free resolution of $X'$ (and of an additional auxiliary module $M'$) will allow one to prove that $X$ is the Kummer variety of an Abelian 4-fold, but there do not seem to be any cohomological characterizations of Jacobians amongst Abelian 4-folds.

\section{$\bC^4 \otimes \spin(10)$.}

We can write $\bC^4 \otimes \spin(10) = \spin(6) \otimes \spin(10)$, so this can be considered as a subcase of $\spin(16)$ in the previous section.

Let $A$ be a vector space of dimension 4 and $B$ be a vector space of
dimension 10 equipped with an orthogonal form $\omega \in S^2(B^*)$. The relevant data:

\begin{compactitem}
\item $U = A \otimes \spin(B)$
\item $G = (\GL(A) \times {\bf G}\Spin(B)) / \{(x, x^{-1}) \mid x \in
  \bC^*\}$ 
\item $G/P = \bP(A) = \Gr(1,A)$
\item $\cU = \cQ \otimes \spin(\ul{B})$
\item $U' = \bC^3 \otimes \spin(10)$
\item $G' = (\GL_3(\bC) \times \Spin_{10}(\bC)) / \{(x, x^{-1}) \mid x \in
  \bC^*\}$
\end{compactitem}

The ring of invariants $\Sym(U^*)^{(G,G)}$ is a polynomial ring with generators of degrees 8, 12, 20, 24, and the graded Weyl group is Shephard--Todd group 31 \cite[\S 9]{vinberg}.

\subsection{Modules over $\cO_{U'}$.}

Let $A' = \bC^3$ and let $G' = (\GL(A') \times \Spin(B)) /
\{(x,x^{-1}) \mid x \in \bC^*\}$. Let $R = \Sym((A' \otimes
\spin(B))^*)$.

There is a degree 12 $G'$-invariant hypersurface $X' \subset U'$,
whose equation is described in \cite[\S 3]{gyoja}. One can interpret
the construction there as taking the determinant of a certain $3
\times 3$ symmetric matrix whose entries are quartic forms. More
precisely, the representation $\bS_{2,2}(\spin(B))$ contains a
$\Spin(B)$-invariant, so we get polynomials $P$ which span the
representation $\bS_{2,2}(A'^*) \subset S^4((A' \otimes
\spin(B))^*)$. Then $\bS_{2,2}(A'^*) \cong S^2(A') \otimes (\det
A'^*)^{2}$, so we can interpret the space of $P$ as the space of
linear functions on symmetric $3 \times 3$ matrices of the form
\begin{align} \label{eqn:C3spin10matrix} \phi \colon A' \to A'^*
  \otimes (\det A')^{2},
\end{align}
and the equation for $X'$ is the determinant of this matrix. 

\begin{proposition}
  The $2 \times 2$ minors of $\phi$ define a radical ideal of
  codimension $3$, and the corresponding variety is the singular locus
  $Z'$ of $X'$.
\end{proposition}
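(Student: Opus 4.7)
The forward inclusion $V(I_2(\phi))\subseteq Z'$ is immediate from the chain rule: for any linear coordinate $t$ on $U'$,
\[
\frac{\partial\det\phi}{\partial t}\;=\;\sum_{i,j}(\operatorname{adj}\phi)_{ji}\cdot\frac{\partial\phi_{ij}}{\partial t},
\]
and the adjugate entries are signed $2\times 2$ minors of $\phi$. Hence the Jacobian ideal of $\det\phi$ lies in $I_2(\phi)$, so the zero locus of the latter lies in the singular locus of $X'=V(\det\phi)$.

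The core of the argument is to show that $V(I_2(\phi))$ has codimension exactly $3$, for once this is known, two things follow at once. First, Theorem~\ref{thm:eagonnorthcott} (generic perfection) applied to the Goto--J\'ozefiak--Tachibana resolution of Example~\ref{eg:GJT}---which is a length-$3$ resolution of the radical, Cohen--Macaulay ideal of $2\times 2$ minors of a generic $3\times 3$ symmetric matrix---specializes via $\phi$ to a free resolution of $R/I_2(\phi)$. This gives perfection of codimension $3$ and, in particular, the Cohen--Macaulay property. Second, since $\phi$ is $G'$-equivariant, both $V(I_2(\phi))$ and $Z'$ are $G'$-invariant closed subsets of $U'$, and $Z'$ is a proper closed invariant subset of the irreducible hypersurface $X'$ (an irreducible hypersurface is generically smooth), hence has codimension at least $2$ in $U'$; comparing with the codimension-$3$ subset $V(I_2(\phi))$ it contains and invoking the orbit structure below yields the reverse inclusion $Z'\subseteq V(I_2(\phi))$.

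The codimension claim I would establish in two steps. First, exhibit an explicit $v\in U'$ with $\rank\phi(v)=2$, so that $V(I_2(\phi))$ is a proper subvariety; combined with the inclusion into $Z'$ this already gives codimension $\ge 2$. Second, use that $(G',U')=(\GL_3\times\Spin_{10},\,\bC^3\otimes\spin(10))$ is a $\theta$-representation of finite type, so its orbits can be enumerated by Vinberg's method as in the $\bigwedge^3\bC^7$ example. In that enumeration one identifies a unique $G'$-orbit closure of codimension $3$ whose generic element $v$ has $\rank\phi(v)=1$, and verifies that no $G'$-orbit closure of codimension $2$ lies in the singular locus of $X'$. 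This delivers both codimension $3$ and the equality $V(I_2(\phi))=Z'$.

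Finally, radicality follows from the $(S_1)$ property of the Cohen--Macaulay ring $R/I_2(\phi)$ together with a check of $(R_0)$ at the generic point of the codimension-$3$ orbit: at such a $v$ the matrix $\phi(v)$ has rank $1$, and one shows, using the $G'$-equivariance of $\phi$ and the irreducibility of the normal representation to the orbit, that the three $2\times 2$ minors linearize to linearly independent cotangent vectors, cutting out a smooth scheme locally. The main obstacle is the last equivariant transversality check together with the orbit enumeration it rests on; both can be carried out by hand along the lines of the $\bigwedge^3\bC^7$ analysis earlier in the paper, or verified computationally in Macaulay2.
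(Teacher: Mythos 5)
Your proposal follows essentially the same route as the paper: bound the codimension of $I_2(\phi)$ by 3 from above (generic symmetric $3\times 3$ minors) and from below ($G'$-equivariance plus the orbit stratification of $U'$), identify the resulting codimension-3 Cohen--Macaulay subscheme with an orbit closure, and check radicality via Serre's criterion using the Goto--J\'ozefiak--Tachibana resolution and a generic-reducedness computation. The paper compresses this into the observations that the ideal cuts out a union of orbit closures, that there are no codimension-2 orbit closures, that there are exactly two of codimension 3 (only one of which is cut out set-theoretically by $I_2(\phi)$), and that radicality and the singular-locus identification are verified on orbit representatives; your chain-rule argument for $V(I_2(\phi))\subseteq Z'$ is a nice explicit rendition of what the paper leaves implicit.

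One step in your sketch is not quite tight. For the reverse inclusion $Z'\subseteq V(I_2(\phi))$, you rule out codimension-2 orbit closures in $Z'$ and identify $V(I_2(\phi))$ with the unique codimension-3 orbit closure where $\rank\phi=1$; but this does not by itself preclude $Z'$ from also containing the \emph{other} codimension-3 orbit closure (the one where $\rank\phi$ generically equals $2$), which would make $Z'$ strictly larger than $V(I_2(\phi))$. One must additionally check, on a representative of that second orbit, that the Jacobian of $\det\phi$ (equivalently of the minors) is nonvanishing there, so that orbit is not contained in the singular locus of $X'$. The paper folds this into the terse instruction to ``check the Jacobian matrix of the ideal of minors on orbit representatives,'' and your closing caveat about needing hand or Macaulay2 checks does gesture at it, but it is worth naming this check explicitly since it is the one step that actually closes the reverse inclusion.
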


\begin{proof}
The maximal codimension of the submaximal minors of a symmetric
matrix is 3. Since this ideal is $G'$-equivariant, it must cut out a
union of orbit closures. Since they do not vanish on $X'$ and there
are no orbit closures of codimension 2, this ideal has codimension 3,
and in particular defines a Cohen--Macaulay variety.

There are 2 orbits of codimension 3. We can check on representatives
that the $2 \times 2$ minors vanish on only one of these orbits, and
that the defined scheme is generically reduced on the other
orbit. This gives that the ideal of minors is radical of codimension
3. To check the statement about the singular locus, it is enough to
check the Jacobian matrix of the ideal of minors on orbit
representatives.
\end{proof}

As a corollary, the resolution of $\cO_{Z'}$ is given as follows (see
Example~\ref{eg:GJT}):
\begin{align*}
  0 \to A'^* \otimes (\det A'^*)^{5} \otimes R(-16) \to
  \bS_{2,1}(A'^*) \otimes (\det A'^*)^{3} \otimes R(-12) \to\\*
  S^2(A'^*) \otimes (\det A'^*)^{2} \otimes R(-8) \to R \to \cO_{Z'}
  \to 0.
\end{align*}

We set $M$ to be a certain twist of the cokernel of $\phi$, namely, we
have the presentation
\[
A' \otimes (\det A'^*)^3 \otimes R(-8) \to A'^* \otimes (\det A'^*)
\otimes R(-4) \to M \to 0.
\]

For $P \in X' \setminus Z'$, $\phi_P$ has corank 1, so $M_P \cong
\cO_{X',P}$. For $P$ in the open orbit of $Z'$, then $\phi_P$ has
corank 2, so $M_P$ is minimally generated by 2 elements.

\begin{lemma} \label{lemma:c3s10algstructure}
  There is a $G'$-equivariant $\cO_{X'}$-linear isomorphism $S^2 M
  \cong I_{Z',X'}$.
\end{lemma}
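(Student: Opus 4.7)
The plan is to mimic the proofs of Lemmas~\ref{lemma:w37algstructure} and~\ref{lemma:w036algstructure}: produce $G'$-equivariant presentations of both $S^2 M$ and $I_{Z',X'}$ over $\cO_{X'}$, observe they agree term by term as $G'$-representations, and conclude by Schur's lemma that the cokernels are isomorphic.

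First, applying $S^2(-)$ to the given presentation of $M$ yields
\[
A' \otimes A'^* \otimes (\det A'^*)^4 \otimes R(-12) \to S^2(A'^*) \otimes (\det A'^*)^2 \otimes R(-8) \to S^2 M \to 0,
\]
where the differential is the symmetric multiplication induced by $\phi$. Second, truncating the displayed minimal free resolution of $\cO_{Z'}$ gives the $R$-presentation
\[
\bS_{2,1}(A'^*) \otimes (\det A'^*)^3 \otimes R(-12) \to S^2(A'^*) \otimes (\det A'^*)^2 \otimes R(-8) \to I_{Z'} \to 0.
\]
To pass to $I_{Z',X'} = I_{Z'}/I_{X'}$ as an $\cO_{X'}$-module, I would add a further relation at the same grading coming from the inclusion $I_{X'} \subset I_{Z'}$: the generator $f = \det\phi$ spans a one-dimensional $G'$-subrepresentation of $\GL(A')$-type $L := (\det A'^*)^4$ inside $R_{12}$, and since $f \in I_{Z'}$ is expressible as an $R$-linear combination of the generators $S^2(A'^*) \otimes (\det A'^*)^2 \otimes R_4$, this contributes a summand $L \otimes R(-12)$ to the relation module, producing the total relations
\[
[\bS_{2,1}(A'^*) \otimes (\det A'^*)^3 \oplus (\det A'^*)^4] \otimes R(-12).
\]

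The key representation-theoretic check is the $\GL(A')$-equivariant isomorphism $A' \otimes A'^* \otimes (\det A'^*)^4 \cong (\det A'^*)^4 \oplus \bS_{2,1}(A'^*) \otimes (\det A'^*)^3$, which follows from $A' \otimes A'^* \cong \mathbf{1} \oplus \mathfrak{sl}(A')$ combined with the rank-three highest-weight identity $\mathfrak{sl}(A') \otimes \det A'^* \cong \bS_{2,1}(A'^*)$. The two presentations therefore agree as $G'$-equivariant diagrams, and since each irreducible appearing has multiplicity one, $G'$-equivariance forces the two differentials to agree on each summand up to a scalar. Nonvanishing of these scalars can be checked by localizing at a point of the open orbit $X' \setminus Z'$: there $\phi$ has corank one, so $M$ is locally free of rank one and both $S^2 M$ and $I_{Z',X'}$ specialize to $\cO_{X',P}$, forcing the equivariant map to be nonzero on every isotypic component. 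The step I expect to be most delicate, and the main obstacle, is the correct identification $L = (\det A'^*)^4$ (rather than $(\det A')^4$): since $f = \det\phi$ naturally lives in $(\det A')^4 \otimes R_{12}$, one must carefully account for the trivialization of this twist when extracting $f$ as a genuine polynomial in $R$, and it is precisely this bookkeeping that makes the $\mathbf{1}$-summand of $A'\otimes A'^*\otimes(\det A'^*)^4$ match the contribution from $I_{X'}$.
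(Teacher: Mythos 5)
Your proposal follows the paper's own approach: the paper proves this lemma by citing the argument for Lemma~\ref{lemma:w37algstructure}, which compares $G'$-equivariant presentations of $S^2 M$ and $I_{Z',X'}$ and matches them term by term. You have carried out that adaptation correctly. In particular, the decomposition you use, $A' \otimes A'^* \otimes (\det A'^*)^4 \cong (\det A'^*)^4 \oplus \bS_{2,1}(A'^*) \otimes (\det A'^*)^3$, is the precise analogue of the identity $W \otimes \bigwedge^2 W \cong \bigwedge^3 W \oplus \bS_{2,1} W$ that drives the proof of Lemma~\ref{lemma:w37algstructure}, and your identification of the extra relation coming from $I_{X'}$ as the line $(\det A'^*)^4 \subset R_{12}$ (rather than $(\det A')^4$) is the right bookkeeping: the $G'$-fixed vector $\det\phi$ lives in $(\det A')^4 \otimes R_{12}$, so the line it spans inside $R_{12}$ transforms as $(\det A'^*)^4$, consistent with the Cauchy decomposition $R_{12} = \bigoplus_{|\lambda|=12}\bS_\lambda(A'^*)\otimes\bS_\lambda(\spin(B)^*)$. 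The argument that the scalars are nonzero via localization at a point of $X'\setminus Z'$ is exactly the kind of check the paper implicitly relies on.

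One small caveat worth keeping in mind: the "each irreducible appears with multiplicity one'' claim needs the hom-spaces $\hom_{G'}\bigl(\bS_{2,1}(A'^*)\otimes(\det A'^*)^3,\ S^2(A'^*)\otimes(\det A'^*)^2\otimes R_4\bigr)$ and $\hom_{G'}\bigl((\det A'^*)^4,\ S^2(A'^*)\otimes(\det A'^*)^2\otimes R_4\bigr)$ to be one-dimensional, which is what makes the two differentials determined up to scalar. This is not hard to verify using the Cauchy formula and the $\Spin(B)$-invariants in the relevant plethysms of $\spin(B)^*$, but it should be spelled out rather than asserted; the paper's phrase "up to a choice of a scalar" is silently appealing to the same multiplicity-one fact.
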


\begin{proof} The proof is similar to the proof of
  Lemma~\ref{lemma:w37algstructure}. 
\end{proof}

The other codimension 3 orbit closure $Z'_2$ can be described as maps
$\spin(B)^* \to A'$ whose kernel contains a nonzero pure spinor. This
variety fails to be normal, and the minimal free resolution of its
normalization is
\begin{align*}
  0 \to \det A' \otimes S^5 A' \otimes R(-8) \to \det A' \otimes S^3
  A' \otimes B \otimes R(-6) \\*
  \to \det A' \otimes S^2 A' \otimes \spin(B)^* \otimes R(-5) \to R
  \oplus \det A' \otimes \spin(B) \otimes R(-3) \to \tilde{\cO}_{Z'_2}
  \to 0.
\end{align*}

We define $\cX$, $\cZ$, $\cZ_2$, and $\cM$ to be the global versions
of $X'$, $Z'$, $Z'_2$ and $M$. 

\subsection{Geometric data from a section.}

Now choose a section $v \in U^{\rm gen}$ and set $X = v(\bP(A)) \cap
\cX$ and $Z = v(\bP(A)) \cap \cZ$.

\begin{lemma}
  $v(\bP(A)) \cap \cZ_2 = \emptyset$.
\end{lemma}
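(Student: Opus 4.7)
The plan is to combine the preceding lemma with Theorem~\ref{thm:eagonnorthcott} (Eagon--Northcott generic perfection) and a short Euler-characteristic computation on $\bP(A) \cong \bP^3$. First, I would apply the preceding lemma orbit by orbit inside $Z'_2$: every orbit of codimension strictly greater than $3$ in $U'$ has empty preimage in $\bP(A)$ for $v$ in a nonempty open subset (since the expected codimension would exceed $\dim \bP(A) = 3$), and the open orbit $Y^\circ \subset Z'_2$ of codimension $3$ has preimage that is either empty or zero-dimensional. Thus $v(\bP(A)) \cap \cZ_2$ is empty or zero-dimensional for such $v$, and it suffices to rule out the zero-dimensional case.

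Next, I would globalize the given resolution of $\tilde{\cO}_{Z'_2}$ to $\bP(A)$ by replacing $A'$ with the tautological quotient bundle $\cQ$ (of rank $3$) and keeping $B$ trivial. By Theorem~\ref{thm:eagonnorthcott}, specializing along the section $v$ produces a locally free resolution of $v^* \tilde{\cO}_{\cZ_2}$ on $\bP^3$ with terms
\[
v^* F_0 = \cO \oplus (\det \cQ \otimes \spin(\ul{B})), \quad v^* F_1 = \det \cQ \otimes S^2 \cQ \otimes \spin(\ul{B})^*,
\]
\[
v^* F_2 = \det \cQ \otimes S^3 \cQ \otimes \ul{B}, \quad v^* F_3 = \det \cQ \otimes S^5 \cQ.
\]

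Using $\det \cQ = \cO(1)$ and Borel--Weil (which gives $\rH^0(\bP^3, S^k \cQ \otimes \cO(1)) = \bS_{(1,0,0,-k)}(A^*)$ with vanishing higher cohomology, since $(1,0,0,-k)$ is weakly decreasing for $k \ge 0$), the Weyl dimension formula gives $\chi(S^k \cQ \otimes \cO(1)) = 4, 36, 70, 189$ for $k = 0, 2, 3, 5$. The four Euler characteristics are therefore $65$, $576$, $700$, $189$, and the alternating sum is $65 - 576 + 700 - 189 = 0$, so $\chi(\bP^3, v^* \tilde{\cO}_{\cZ_2}) = 0$.

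A coherent sheaf on $\bP^3$ with zero-dimensional support has Euler characteristic equal to the total length of its stalks, hence vanishes only when the sheaf itself is zero. Combining this with the dimension bound from the first step, I conclude $v^* \tilde{\cO}_{\cZ_2} = 0$, i.e., $v(\bP(A)) \cap \cZ_2 = \emptyset$. The only nontrivial step will be the Euler-characteristic computation, but it is mechanical once one observes that every term of the resolution is $\cO(1)$ tensored with a Schur functor of $\cQ$ and a trivial factor.
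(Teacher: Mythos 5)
Your proof is correct and takes essentially the same route as the paper: globalize the length-$3$ resolution of $\tilde{\cO}_{Z'_2}$ via Eagon--Northcott, use the earlier generic-codimension lemma to reduce to the case where the intersection is at most zero-dimensional, and then compute cohomology on $\bP^3$ to show the putative finite-length sheaf has length zero. The paper phrases the final step as ``taking sections gives an exact complex,'' which is exactly your Euler-characteristic argument made precise (with your terms $\det\cQ \otimes S^k\cQ \otimes (\text{trivial})$, every term is a direct sum of bundles $\bS_{(1,0,0,-k)}\cQ$ with no higher cohomology, so the complex of $\rH^0$'s computes $\rH^*(v^*\tilde{\cO}_{\cZ_2})$ and the alternating sum $65 - 576 + 700 - 189 = 0$ forces $\rH^0 = 0$); note the paper's displayed specialization has $S^k\cQ^*(-1)$ where the resolution of $\tilde{\cO}_{Z'_2}$ dictates $\det\cQ \otimes S^k\cQ$ after replacing $A'$ by $\cQ$, so your terms appear to be the correct ones, though the Euler characteristic happens to vanish either way.
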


\begin{proof}
Since $\tilde{\cO}_{Z'_2}$ is a perfect module, we can specialize its resolution by replacing $A'$ by $\cQ$, so that we get a complex 
  \[
  0 \to S^5 \cQ^*(-1) \to S^3 \cQ^* \otimes \ul{B}(-1) \to S^2 \cQ^*
  \otimes \spin(\ul{B})^*(-1) \to \cO_{\bP^3} \oplus
  \spin(\ul{B})(-1).
  \]
  If $v(\bP(A)) \cap \cZ_2$ is nonempty, then it consists of finitely
  many points. But taking sections above, we get an exact
  complex. Hence the intersection is empty.
\end{proof}

From Lemma~\ref{lemma:c3s10algstructure}, we get a $\cO_X$-linear map
$\mu \colon S^2 \cM \to \cO_X$, which gives an $\cO_X$-algebra
structure on $\cO_X \oplus \cM$. We set $\cO_{\tilde{X}} =
\Spec_{\cO_X}(\cO_X \oplus \cM)$ and let $\pi \colon \tilde{X} \to X$
be the structure map.

Define $U^{\rm sm+}$ to be the subset of $U^{\rm sm}$ where $\cM$ is minimally generated by 2 elements along $Z$.

\begin{theorem} If $v \in U^{\rm sm+}$, then $\tilde{X}$ is an Abelian surface and $\cL = \pi^* \cO_X(1)$ is an indecomposable $(2,2)$-polarization.
\end{theorem}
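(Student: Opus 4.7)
The plan is to adapt the arguments of the analogous propositions in \S\ref{section:geometricw48} and \S\ref{section:geometricw048} to the two-dimensional setting. First I would globalize the resolution of $\cO_{X'}$ and the presentation of $M$ on $\bP(A) = \bP^3$ by replacing the factor $A'$ with the tautological quotient bundle $\cQ$ (rewriting $(\det A'^*)^k$ as $\cO(-k)$ via $\det \cQ \cong \cO(1)$) and replacing each $R(-d)$ with $\cO_{\bP^3}(-d)$. Since $v \in U^{\rm gen}$, Theorem~\ref{thm:eagonnorthcott} produces locally free resolutions of $\cO_X$ and $\cM$ on $\bP^3$. Tracking twists shows that $\det\phi(v)$ is a section of $\cO_{\bP^3}(4)$, so $X$ is a quartic hypersurface; in particular $\omega_X \cong \cO_X$ by adjunction. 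A term-by-term Borel--Weil--Bott computation in the spirit of Theorem~\ref{thm:w39calc} then gives $\rh^0(X; \cO_X) = \rh^2(X; \cO_X) = 1$ with $\rh^1(X; \cO_X) = 0$, and $\rh^1(X; \cM) = 2$ with $\rh^0(X; \cM) = \rh^2(X; \cM) = 0$. Combined with the algebra structure from Lemma~\ref{lemma:c3s10algstructure}, one deduces $\omega_{\tilde X} \cong \cO_{\tilde X}$ and $\rh^i(\tilde X; \cO_{\tilde X}) = \binom{2}{i}$.

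Next I would verify smoothness of $\tilde X$ by local analysis, parallel to the argument in \S\ref{section:geometricw48}. At $P \in X \setminus Z$, $\cM_P \cong \cO_{X,P}$ is free of rank one and $\mu_P$ is multiplication by a generator of the unit ideal $\cI_{Z,P}$, hence a unit, so $\cO_{\tilde X,P} \cong \cO_{X,P}[t]/(t^2-u)$ is \'etale of degree two over $\cO_{X,P}$. At $P \in Z$, the hypothesis $v \in U^{\rm sm+}$ forces $\cM_P$ to be minimally generated by two elements; setting $R = \cO_{X,P}$, $\fm = \cI_{Z,P}$, $S = R \oplus \cM_P$, and $\fn = \fm \oplus \cM_P$, the surjectivity $S^2 \cM_P \twoheadrightarrow \fm$ from Lemma~\ref{lemma:c3s10algstructure} yields $\fn^2 = \fm \oplus \fm \cdot \cM_P$, so $\fn/\fn^2 \cong \cM_P/\fm\cM_P$ is two-dimensional over $R/\fm$. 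Since $\dim S = 2$, $S$ is regular and $\tilde X$ is smooth. Theorem~\ref{thm:cohomologyabelian} then upgrades $\tilde X$ to a torsor over an Abelian surface; geometrically, $X$ will be a singular Kummer quartic with nodes at $Z$ and $\pi$ its desingularization.

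For the polarization, $\rh^0(\tilde X; \cL) = \rh^0(X; (\cO_X \oplus \cM)(1)) = 4$ by a parallel cohomology calculation, so $\cL$ has type $(d_1,d_2)$ with $d_1 d_2 = 4$. The map $\phi_\cL \colon \tilde X \to \bP^3$ factors as $\tilde X \xrightarrow{\pi} X \hookrightarrow \bP^3$ with $\deg\pi = 2$, so \cite[\S 2, Corollary 4]{mumford1} forces each $d_i$ to be even, whence $(d_1,d_2) = (2,2)$. If $(\tilde X, \cL)$ decomposed as a product of $s$ polarized Abelian varieties, then by \cite[Theorem 4.8.2]{birkenhake} $\deg \phi_\cL = 2^s$; since $\deg\phi_\cL = 2$, we conclude $s = 1$ and $\cL$ is indecomposable.

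The main obstacle is the identification $\omega_{\tilde X} \cong \cO_{\tilde X}$ in step one: unlike the $\bigwedge^4 \bC^8$ case where $X$ is non-Gorenstein and $\cM$ is literally the canonical module, here $X$ is a Gorenstein hypersurface and $\cM$ is only a rank-one torsion-free sheaf with a jump in generators along $Z$. The correct assertion is that the pairing $\mu$ makes $\cM$ self-dual as an $\cO_X$-module, so by Grothendieck duality $\pi_*\omega_{\tilde X} = \cH om_{\cO_X}(\cO_X \oplus \cM, \omega_X) = \omega_X \oplus \cM = \cO_X \oplus \cM = \pi_*\cO_{\tilde X}$. Verifying this self-duality carefully is the subtlest point in the argument; once granted, the cohomology vanishings and local smoothness analysis are essentially routine adaptations of the earlier proofs.
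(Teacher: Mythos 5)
Your proposal is correct and follows the same template the paper invokes when it writes ``the proof is similar to the proofs in \S\ref{section:geometricw48}''; since the paper gives only that one sentence, what you have done is supply the details that the analogy elides. Your Bott computations check out: with the presentation $0 \to \cQ(-3) \xrightarrow{\phi} \cQ^*(-1) \to \cM \to 0$ (using $\det\cQ \cong \cO(1)$), one finds $\cQ(-3)$ has cohomology only in degree $2$ (one-dimensional) and $\cQ^*(-1) = \Omega^1_{\bP^3}$ has cohomology only in degree $1$ (one-dimensional), giving $\rh^\bullet(\cM) = (0,2,0)$, so that $\rh^i(\tilde X; \cO_{\tilde X}) = \binom{2}{i}$, and $\rh^0(\cM(1)) = 0$ gives $\rh^0(\tilde X; \cL) = 4$. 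You also correctly flagged the one place where the analogy to \S\ref{section:geometricw48} is not literal: there $X$ is non-Gorenstein and $\cM$ \emph{is} the canonical module, whereas here $X$ is a hypersurface (hence Gorenstein with $\omega_X \cong \cO_X$) and $\cM$ is a different MCM module, so $\pi_*\omega_{\tilde X} = \cH om(\cO_X \oplus \cM, \omega_X) \cong \cO_X \oplus \cM$ requires the self-duality $\cM \cong \cH om_{\cO_X}(\cM, \omega_X)$, which indeed follows from the symmetry of $\phi$ (equivalently, $\coker\phi \cong \coker\phi^{\vee}$). An alternative shortcut worth noting: once you have shown $\tilde X$ is smooth and $\pi$ is \'etale off $\pi^{-1}(Z)$, which has codimension $2$, the triviality $\omega_{\tilde X} \cong \pi^*\omega_X \cong \cO_{\tilde X}$ on the open part extends automatically, so the self-duality of $\cM$, while true and clean, can be bypassed.
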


\begin{proof}
The proof is similar to the proofs in \S\ref{section:geometricw48}.
\end{proof}

From \eqref{eqn:C3spin10matrix}, we also get a symmetric matrix
\begin{align} \label{eqn:quadcomplex}
\phi_v \colon \cQ \to \cQ^* \otimes (\det\cQ)^2.
\end{align}

\subsection{Quadratic complexes.}

Here is another approach:
\begin{compactitem}
\item $G/P = \Gr(2,A)$
\item $\cU = \cQ_2 \otimes \spin(\ul{B})$
\item $U' = \bC^2 \otimes \spin(10)$
\item $G' = (\GL_2(\bC) \times \Spin_{10}(\bC)) / \{(x, x^{-1}) \mid x \in
  \bC^*\}$
\end{compactitem}

There is a $G'$-invariant quartic hypersurface in $U'$. Given $v \in
U^{\rm sm}$, the corresponding degeneracy locus is a smooth section
$Q$ of $\cO(2)$.

\begin{proposition} $Q$ is the quadratic complex associated to the
  symmetric matrix \eqref{eqn:quadcomplex} constructed in the previous
  section.
\end{proposition}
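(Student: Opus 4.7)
The plan is to realize both $Q$ and the quadratic complex $Q_\phi$ of $\phi_v$ as the zero loci of sections of the same line bundle $\cO(2)$ on $\Gr(2,A)$, both constructed $(\GL(A) \times \Spin(B))$-equivariantly and quartically from $v$, and then to identify these sections up to a nonzero scalar via an equivariance argument.

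First, I extract from $\phi_v$ a canonical section of $\cO(2)$ on $\Gr(2,A)$. On the incidence variety $\Fl(1,2,A)$ with projections $\pi_1$ and $\pi_2$ to $\bP(A)$ and $\Gr(2,A)$, the pullback $\pi_1^*\phi_v$ is a section of $S^2\cQ_1^* \otimes (\det \cQ_1)^2$, where $\cQ_1$ is the rank-$3$ quotient pulled back from $\bP(A)$. Restrict it along the rank-one subbundle $\cR_2/\cR_1 \hookrightarrow \cQ_1$ to obtain a section of $S^2(\cR_2/\cR_1)^* \otimes \pi_1^*(\det\cQ_1)^2$. On each fiber $\pi_2^{-1}(W) = \bP(W) \cong \bP^1$ the bundle $\cQ_1$ splits canonically as $\cO(1) \oplus \ul{A/W}$, and a short line-bundle computation shows the above sheaf restricts to the trivial line bundle $\ul{(\det A/W)^2}$. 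Pushing forward by $\pi_2$ therefore yields a section $s_\phi$ of $(\det \cQ_2)^2 \cong \cO(2)$ on $\Gr(2,A)$, whose zero locus is by definition $Q_\phi$.

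Second, I identify the value of $s_\phi$ at $W \in \Gr(2,A)$ in terms of $v$. Pick any $x \in \bP(W)$; canonically, $s_\phi(W)$ is the restriction of the symmetric form $\phi(v_x) \in S^2(A/x)^* \otimes (\det A/x)^2$ to the subspace $W/x$, viewed as an element of $(\det A/W)^2$ via $\det A/x \cong (W/x) \otimes \det(A/W)$. Choose any splitting $A/x = (W/x) \oplus (A/W)$ and decompose $v_x = v_{W/x} + v_W$, where $v_W \in (A/W) \otimes \spin(B) \cong U'$ is independent of the splitting. Under the one-parameter subgroup of $\GL(A/x)$ rescaling $W/x$ by $t$ and fixing $A/W$, both $v_W$ and the output $s_\phi(W) \in (\det A/W)^2$ have weight $0$, while $v_{W/x}$ has weight $1$. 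Matching weights in the bihomogeneous expansion of $\phi(v_x)$ at the $(W/x)^{-2}$-component forces the contribution to depend only on $v_W$, so $s_\phi(W) = p(v_W)$ for a $(\Spin(B) \times \GL(A/W))$-equivariant quartic $p$ on $U'$ valued in $(\det A/W)^2$.

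Finally, I match $p$ with the defining invariant $f$ of $Q$. By Cauchy's decomposition $S^4(U'^*) = \bigoplus_{\lambda \vdash 4} \bS_\lambda(A/W)^* \otimes \bS_\lambda \spin(B)^*$, only the partition $\lambda = (2,2)$ yields $\GL(A/W)$-semi-invariant quartics of the required weight, and for $\spin(B) = \spin^+(10)$ the space of $\Spin(B)$-invariants in $\bS_{(2,2)}\spin(B)^*$ is one-dimensional (because $(\bC^* \times \Spin(10), \spin^+(10))$ is a prehomogeneous vector space with a unique quartic relative invariant, which is $f$). Therefore $p$ and $f$ lie in the same one-dimensional space, and a nonvanishing check on a single point shows they are proportional by a nonzero scalar; hence $s_\phi$ and the quartic section defining $Q$ differ by a nonzero constant, and $Q_\phi = Q$ as divisors in $\Gr(2,A)$. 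The main technical point is the one-dimensionality of $(\bS_{(2,2)}\spin^+(10)^*)^{\Spin(10)}$, which one can verify with LiE, or alternatively bypass by directly comparing the two sections on a single orbit representative.
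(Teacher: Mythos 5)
Your argument rests on the same multiplicity-one fact as the paper's --- that $\bS_{2,2}A$ occurs exactly once in $S^4(A\otimes\spin(B))$ as a $\GL(A)\times\Spin(B)$-module, equivalently that $(\bS_{2,2}\spin(B)^*)^{\Spin(B)}$ is one-dimensional --- but you reach it by a more explicit route. The paper cites the Narasimhan--Ramanan isomorphism $k\colon\rH^0(\Gr(2,A);(\det\cQ_2)^2)\to\rH^0(\bP(A);S^2\cQ^*\otimes(\det\cQ)^2)$ (both sides being $\bS_{2,2}A$) and then compares two $\GL(A)$-equivariant maps $S^4(A\otimes\spin(B))\to\bS_{2,2}A$. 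You instead construct the inverse of $k$ directly by restricting $\pi_1^*\phi_v$ along $\cR_2/\cR_1$ on $\Fl(1,2;A)$ and pushing forward by $\pi_2$, and then carry out a fiberwise weight computation showing that $s_\phi(W)$ depends only on $v_W\in(A/W)\otimes\spin(B)$ and must therefore be the unique quartic semi-invariant there. This is a correct and more self-contained unpacking of the same idea (irreducibility of $\bS_{2,2}A$ identifies your pushforward map with $k^{-1}$ up to scalar). One correction, however: your parenthetical justification of the multiplicity-one fact is wrong. The prehomogeneous space $(\bC^*\times\Spin(10),\spin^+(10))$ has no nontrivial relative invariant at all --- the complement of its open orbit is the affine cone over the spinor variety, which has codimension $5$, not a hypersurface --- and $f$ is a relative invariant of $(\GL_2\times\Spin(10),\bC^2\otimes\spin^+(10))$, not of $\spin^+(10)$ alone. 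The existence of $f$ therefore only gives $\dim(\bS_{2,2}\spin^+(10)^*)^{\Spin(10)}\ge 1$; for the equality you indeed need the LiE check (or the direct comparison) that you mention as a fallback.
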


\begin{proof} There is a $\GL(A)$-equivariant isomorphism
\[
k \colon \rH^0(\Gr(2,A); (\det\cQ_2)^2) \to \rH^0(\bP(A); S^2(\cQ^*) \otimes (\det \cQ)^2) 
\]
constructed in \cite[\S 8]{narasimhan1} which associates to a
quadratic complex (i.e., section in the domain above) to a symmetric
matrix $\cQ \to \cQ^* \otimes (\det \cQ)^2$ whose determinant is the equation of a Kummer surface. Since both sides are isomorphic to the irreducible
$\GL(A)$-representation $\bS_{2,2} A$, this map is uniquely determined
up to scalar. In the previous section and in the construction above,
we have constructed two $\GL(A)$-equivariant maps $S^4(A \otimes
\spin(B)) \to \bS_{2,2} A$. Since $\bS_{2,2} A$ appears in $S^4(A
\otimes \spin(B))$ with multiplicity 1, this map is also unique up to
scalar. Hence the diagram
\[
\xymatrix{ S^4(A \otimes \spin(B)) \ar[d] \ar[dr] \\
  \rH^0(\Gr(2,A); (\det\cQ_2)^2) \ar^-k[r] & \rH^0(\bP(A); S^2(\cQ^*) \otimes (\det \cQ)^2)
}
\]
commutes (up to possibly nonzero scalar ambiguity), which proves our
claim. 
\end{proof}

\subsection{Doing calculations.}

Again, we have written code in Macaulay2 for calculating the quadratic complex in affine trivializations in $\bP^3$, but it is too messy to include. We instead explain the concepts behind the calculation.

We will explicitly calculate the symmetric matrix $\cQ \to \cQ^* \otimes (\det \cQ)^2$ starting with $v \in A \otimes \spin(B)$. First, note that this symmetric matrix is a section of $S^2 \cQ^* \otimes (\det \cQ)^2 \subset S^2 A^* \otimes (\det \cQ)^2$, and the space of sections of the latter is $S^2 A^* \otimes \bS_{2,2,2} A = S^2 A^* \otimes S^2 A^* \otimes (\det A)^2$.
  
First, we have $\bS_{2,2} A^* \subset S^4(A^* \otimes \spin(B)^*)$. This is a 20-dimensional space of quartics which is described explicitly in \cite[\S 3]{gyoja} by the polynomials $P(x,y,z,w)$. More specifically, a basis is given by those $x,y,z,w$ such that $x \le y$, $x < z$, $y < w$, $z \le w$, and $1 \le x,y,z,w \le 4$, i.e., such that $T = \tableau[scY]{x,y | z,w}$ is a semistandard Young tableau. Abbreviate this polynomial by $P_T$. Let $Q_T \in \bS_{2,2} A$ be the dual basis vectors.
  
Then given $v$, we can produce the element $p_v = \sum_T P_T(v) Q_T \in \bS_{2,2} A = \bS_{2,2} A^* \otimes (\det A)^2$. The inclusion $\iota \colon \bS_{2,2} A^* \to S^2 A^* \otimes S^2 A^*$ is defined by
\[
\tableau[scY]{x,y|z,w} \mapsto 2(xy \otimes zw + zw \otimes xy) - (xz
\otimes yw + xw \otimes yz + yw \otimes xz + yz \otimes xw).
\]
So $\iota(p_v) \in S^2 A^* \otimes S^2 A^* \otimes (\det A)^2$. Identifying the latter $S^2 A^* \otimes (\det A)^2$ as sections of $(\det \cQ)^2$, this can be interpreted as a $4 \times 4$ symmetric matrix whose entries are quadrics on $\bP^3$. Let $z_1, z_2,z_3,z_4$ be homogeneous coordinates. Delete the last row and column of this matrix. This gives the symmetric matrix $\cQ \to \cQ^* \otimes (\det \cQ)^2$ over the affine open set $z_4 = 1$. Take the ideals defined by the $i \times i$ minors of this matrix for $i=3,2$ and saturate with respect to $z_4$ to get homogeneous ideals for the degeneracy loci.

\small \noindent Laurent Gruson, 
Universit\'e de Versailles Saint-Quentin-en-Yvelines, 
Versailles, France \\
{\tt laurent.gruson@math.uvsq.fr}

~

\small \noindent Steven V Sam, 
University of California, Berkeley, CA, USA\\
{\tt svs@math.berkeley.edu}, \url{http://math.berkeley.edu/~svs/}

~

\small \noindent Jerzy Weyman, 
Northeastern University, Boston, MA, USA \\
{\tt j.weyman@neu.edu}, \url{http://www.math.neu.edu/~weyman/}

\end{document}